\documentclass[a4paper,12pt, reqno]{amsart}
\usepackage{amsmath,amsfonts,amssymb,amsbsy,amsthm,epsfig,color,mathrsfs}
\usepackage{hyperref}

\newcommand{\ignore}[1]{}

\newcommand{\supp}{\operatorname{supp}}

\newcommand{\GL}{\operatorname{GL}}

\newcommand{\bb}{\mathbb}

\newcommand{\height}{\textrm{\rm H}}
\newcommand{\dist}{\textrm{\rm dist}}
\newcommand{\R}{\bb R}

\newcommand{\Q}{\mathbb Q}

\newcommand{\vol}{\operatorname{vol}}

\newcommand{\adelic}{ad\'{e}lic}

\newtheorem{Theorem}{Theorem}
\newtheorem{Cor}[Theorem]{Corollary}
\newtheorem{Prop}[Theorem]{Proposition}
\newtheorem{Lemma}[Theorem]{Lemma}
\newtheorem*{lemma*}{Lemma}

\newtheorem{example}[Theorem]{Example}
\newtheorem{remark}[Theorem]{Remark}
\newtheorem*{theorem*}{Theorem}

\numberwithin{equation}{section}
\numberwithin{Theorem}{section}

\begin{document}
\title[Diophantine approximation]{Diophantine approximation and automorphic spectrum}
\author{Anish Ghosh, Alexander Gorodnik, and Amos Nevo} 
\address{School of Mathematics, University of East Anglia, Norwich, UK }
\email{a.ghosh@uea.ac.uk}
\address{School of Mathematics, University of Bristol, Bristol UK }
\email{a.gorodnik@bristol.ac.uk}
\address{Department of Mathematics, Technion IIT, Israel}
\email{anevo@tx.technion.ac.il}

\date{\today}
\subjclass[2000]{37A17, 11K60}
\keywords{Diophantine approximation, semisimple algebraic group, homogeneous space, \adelic{} dynamics, automorphic spectrum.}
\thanks{The first author acknowledges support of  EPSRC. The second author acknowledges
  support of EPSRC, ERC and RCUK. The third author acknowledges support of ISF}

\begin{abstract}
The present paper establishes quantitative estimates on the rate of diophantine approximation in 
homogeneous varieties of semisimple algebraic groups.
The estimates established generalize and improve previous ones, and are sharp in a number of cases. We show that the rate of diophantine approximation 
is controlled by the spectrum of the automorphic representation, 
and thus subject to the generalised Ramanujan conjectures.
\end{abstract}

\maketitle

{\small
\tableofcontents
 }

\section{Introduction}

Diophantine approximation can be viewed as an attempt to quantify the density of the set $\mathbb{Q}$ of
rational numbers in the reals $\mathbb{R}$ and, more generally, the density of a number field $K$
in its completion $K_v$. In this paper we will be interested in the problem of Diophantine approximation
on more general algebraic varieties. 
Let $\rm X$ be an algebraic variety defined over a number field $K$.
Given a height function $\height: {\rm X}(K)\to \mathbb{R}^+$ and a metric $\dist_v$ on ${\rm X}(K_v)$,
we introduce a function $\omega_v(x,\epsilon)$ which measures the density of $X(K)$ in $X(K_v)$, and thus the 
Diophantine properties of points $x$ in ${\rm X}(K_v)$ with respect to $X(K)$. We define : 
\begin{equation}\label{eq:omega}
\omega_v(x,\epsilon):=\min\{\height(z):\, z\in {\rm X}(K),\, \dist_v(x,z)\le \epsilon \}
\end{equation}
(if no such $z$ exists, we set $\omega_v(x,\epsilon)=\infty$).
This function is a natural generalization
of the uniform irrationality exponent $\hat \omega(\xi)$ of a real number $\xi$
(see, for instance, \cite{BL}).
Note that $\omega_v(x,\epsilon)$ is a non-increasing function which is bounded as $\epsilon\to 0^+$
if and only if $x\in {\rm X}(K)$ and is finite if and only if $x\in \overline{{\rm X}(K)}$.
For $x\in \overline{{\rm X}(K)}\backslash {\rm X}(K)$,
it is natural to consider the  growth rate of $\omega_v(x,\epsilon)$ as $\epsilon\to 0^+$,
which provides a quantitative measure of irrationality of $x$ with respect to $K$.

Our paper is motivated by the work \cite{W} of M. Waldschmidt  who considered
this problem in the case when ${\rm X}$ is an Abelian variety defined over $\Q$ 
equipped with the N\'eron--Tate height.
\ignore{We restate his results using our notation:
assuming a lower bound on the rank of the group ${\rm X}(\mathbb{Q})$,
he proved that for explicit $\theta\in (0,1)$, every $\epsilon\in (0,\epsilon_0)$,
and every $x\in \overline{X(\QQ)}\subset X(\mathbb{R})$,
$$
\omega_\infty(x,\epsilon)\le \exp(\exp(\log(1/\epsilon)^\theta)).
$$
}
In terms of our notation, he proved upper estimates on the function $\omega_\infty(x,\epsilon)$
and conjectured that for every $\delta>0$,
$\epsilon\in (0,\epsilon_0(\delta))$, and $x\in \overline{{\rm X}(K)}\subset {\rm X}(\mathbb{R})$,
$$
\omega_\infty(x,\epsilon)\le \epsilon^{-\frac{2\dim({\rm
        X})}{\hbox{\fontsize{6pt}{6pt}\selectfont rank}({\rm X}(\mathbb{Q}))}-\delta}.
$$
This conjecture is remarkably strong as one can show that
the exponent in this estimate is the best possible.
We also mention that there is a similar conjecture in the case of  algebraic tori
(see \cite[Conjecture~4.21]{W2}).

More generally, let ${\rm X}\subset \mathbb{A}^n$ be a quasi-affine variety defined over a number field $K$. 
We denote by $V_K$ the set of normalised absolute values $|\cdot|_v$ of $K$,  by $K_v$ the $v$-completion  of $K$, by $k_v$ the residue field and by $q_v$ its cardinality. 
We  define the height function on ${\rm X}(K)$:
\begin{equation}\label{eq:height}
\height(x):=\prod_{v\in V_K} \max_{1\le i\le n} (1,|x_i|_v),
\end{equation}
and the metric on ${\rm X}(K_v)$:
\begin{equation}\label{eq:dist}
\|x-y\|_v:=\max_{1\le i\le n} |x_i-y_i|_v.
\end{equation}

In this paper we derive upper estimates on the functions $\omega_v(x,\epsilon)$ for
quasi-affine varieties which are homogeneous spaces of semisimple algebraic groups.
Our upper bounds on the functions $\omega_v(x,\epsilon)$
depend on information about the spectrum of the associated automorphic representations,
and the {\it best possible} upper bounds will be established in several cases. To illustrate the relevance of the Ramanujan-Petersson  conjectures to our analysis let us consider first the case of diophantine approximation on hyperboloids. Further examples, including the case of spheres of dimensions $2$ and $3$ where best possible upper bounds are obtained, will be discussed in \S  \ref{sec:examples}.

\begin{example} \label{eq:first}
{\rm 
Let $Q$ be a non-degenerate quadratic form in three variables defined over a number field
$K\subset\mathbb{R}$, $a\in K$, and 
$$
{\rm X}=\{Q(x)=a\}.
$$
For a finite set of non-Archimedean places of $K$, we denote by $O_S$ the ring of $S$-integers.
We suppose that $Q$ is isotropic over $S$ and ${\rm X}(O_S)\ne \emptyset$.
Then assuming the Ramanujan--Petersson conjecture for $\hbox{PGL}_2$ over $K$,
our main results imply that  (w.r.t. the maximum norm $\|\cdot\|_\infty$ on $\R^3$, the completion at $v=\infty$) 

\begin{itemize}
\item[(i)]  for almost every $x\in {\rm X}(\mathbb{R})$,
$\delta>0$, and $\epsilon\in (0,\epsilon_0(x,\delta))$, there exists $z\in {\rm X}(O_S)$
such that 
\begin{equation*}
\|x-z\|_\infty\le \epsilon\quad\hbox{and}\quad \height(z)\le \epsilon^{-2-\delta},
\end{equation*}
where the exponent $2$ is the best possible (cf. (\ref{eq:lower}) below).
\item[(ii)] 
for every $x\in {\rm X}(\mathbb{R})$ with $\|x\|\le r$,
$\delta>0$, and $\epsilon\in (0,\epsilon_0(r,\delta))$, there exists $z\in X(O_S)$
such that 
\begin{equation*}
\|x-z\|_\infty\le \epsilon\quad\hbox{and}\quad \height(z)\le \epsilon^{-4-\delta}.
\end{equation*}
\end{itemize}
Using the best currently known estimates towards the Ramanujan--Petersson conjecture (see \cite{ks}),
our method gives unconditional solutions to (i) and (ii) with  
$$
\height(z)\le \epsilon^{-\frac{18}{7}-\delta}\quad\hbox{and}\quad \height(z)\le \epsilon^{-\frac{36}{7}-\delta}
$$
respectively.
Moreover, when $K=\mathbb{Q}$, (i) and (ii) give unconditional solutions to the problem of diophantine approximation on the hyperboloid $X(\R)$ (when $Q$ is isotropic over $\R$), with 
$$
\height(z)\le\epsilon^{-\frac{64}{25}-\delta}\quad\hbox{and}\quad \height(z)\le\epsilon^{-\frac{128}{25}-\delta}
$$
respectively, using \cite[Appendix~2]{KimSa}.

We also mention that a positive proportion of all places satisfy the bound predicted by the Ramanujan--Petersson conjecture (see \cite{ram,ks}).
For such $S$, results (i) and (ii) hold unconditionally.

Finally, we note that for  forms unisotropic over $\R$ we will indeed establish the best possible bound unconditionally - see \S  2. 
}
\end{example}

Before we state our results in full generality,
we observe that there is an (obvious) lower bound for the function $\omega_v$.
For a subset $Y$ of ${\rm X}(K_v)$, we set
$$
\omega_v(Y,\epsilon):=\sup_{y\in Y} \omega_v(y,\epsilon).
$$
Assuming that $Y$ is not a subset of ${\rm X}(K)$, one can give a lower estimate on $\omega_v(Y,\epsilon)$
that depends on the set $Y$, and more specifically on 
the Minkowski dimension $d(Y)$ of $Y$ and 
the size of the set of relevant approximating rational points, namely 
the exponent $a(Y)$ of $Y$.

The Minkowski {\it dimension} of a subset $Y$ of ${\rm X}(K_v)$ is defined by
$$
d(Y):=\liminf_{\epsilon\to 0^+} \frac{\log D(Y,\epsilon)}{\log (1/\epsilon)},
$$
where $D(Y,\epsilon)$ denotes the least number of balls of radius $\epsilon$ (w.r.t. the distance $\text{dist}_v$) needed to cover $Y$.
The set of nonsingular points in ${\rm X}(K_v)$ has a structure of analytic manifold over $K_v$.
In particular, it is equipped with a canonical measure class.
We note that if a subset $Y$ of ${\rm X}(K_v)$ has positive measure, then 
$$
d(Y)=r_v\dim({\rm X})
$$
where $r_v=2$ if $K_v\simeq \mathbb{C}$ and $r_v=1$ otherwise.

The {\it exponent} of a subset $Y$ of ${\rm X}(K_v)$ is defined by
\begin{align*}
\mathfrak{a}_v(Y) &:= \inf_{\mathcal{O}\supset  Y} \limsup_{h\to \infty} \frac{\log A_v(\mathcal{O},h)}{\log h},
\end{align*}
where $\mathcal{O}$ runs over open neighborhoods of $Y$ in ${\rm X}(K_v)$, and 
$$
A_v(\mathcal{O},h):=|\{z\in {\rm X}(K):\, \height(z)\le h,\, z\in\mathcal{O}  \}|.
$$
Note that since $Y\nsubseteq {\rm X}(K)$, we have $\omega_v(Y,\epsilon)\to\infty$ as $\epsilon\to 0^+$.
Hence, for a sufficiently small neighbourhood $\mathcal{O}$ of $Y$, every $\delta_1,\delta_2>0$ and $0<\epsilon<\epsilon_0(\mathcal{O},\delta_1,\delta_2)$, we have
$$
\epsilon^{-d(Y)+\delta_1}\le D(Y,\epsilon)\le A_v(\mathcal{O},\omega_v(Y,\epsilon))\le
\omega_v(Y,\epsilon)^{\mathfrak{a}_v(Y)+\delta_2}.
$$
This implies that for every $\delta>0$ and sufficiently small $\epsilon>0$ depending on $\delta$,
\begin{equation*}
\omega_v(Y,\epsilon)\ge \epsilon^{-\frac{d(Y)}{\mathfrak{a}_v(Y)}+\delta}.
\end{equation*}
In particular, when $Y$ has positive measure, we always have the lower bound
\begin{equation}\label{eq:lower}
\omega_v(Y,\epsilon)\ge \epsilon^{-r_v \frac{\dim({\rm X})}{\mathfrak{a}_v(Y)}+\delta}
\end{equation}
for every $\delta>0$ and $\epsilon\in (0,\epsilon_0(Y,\delta))$.

More generally, we consider the problem of diophantine approximation
for points $x=(x_v)_{v\in S}$ with $S\subset V_K$ and $x_v\in {\rm X}(K_v)$.
Let
\begin{align}\label{eq:x_s}
{ X}_S&:=\{(x_v)_{v\in S}:\, x_v\in {\rm X}(K_v);\;\; x_v\in {\rm X}(O_v)\;\hbox{for almost
  all $v$}\},
\end{align}
where $O_v=\{x\in K_v: |x|_v\le 1\}$ is the ring of integers in $K_v$ for non-Archimedean $v$. 
The set ${ X}_S$, equipped with the topology of the restricted direct product,
is a locally compact second countable space.
One of the fundamental questions in arithmetic geometry is to understand the closure $\overline{{\rm X}(K)}$
in ${ X}_S$ where ${ X}(K)$ is embedded in ${ X}_S$ diagonally.
We say that the {\em approximation property} with respect to $S$ holds if $\overline{{\rm X}(K)}={ X}_S$.
Alternatively, denote the ring of $S$-integers of $K$ by 
$$
O_S=\{x\in K:\,\, |x|_v\le 1\hbox{ for non-Archimedean $v\notin S$}\}\footnote{Note that $O_{\{v\}}\neq O_v$, the ring of integers  defined above !} \,.
$$ 

Then the approximation property with respect to $S$ 
can be reformulated as follows: for  
every $x\in { X}_S$ and every $\epsilon=(\epsilon_v)_{v\in S'}$,
where $\epsilon_v \in (0,1)$ and $S'$ is a finite subset of $S$,
there exists $z\in {\rm X}(O_{(V_K\backslash S)\cup S'})$ such that
\begin{align*}
\|x_v-z\|_v\le \epsilon_v\;\;\hbox{for all $v\in S'$}.
\end{align*}
Our aim is to establish a quantitative version of this property.
Given $x$ and $(\epsilon_v)_{v\in S'}$ as above,
we consider
\begin{equation}\label{eq:omega_S}
\omega_S(x,(\epsilon_v)_{v\in S'}):=\min\left\{\height(z):\, z\in {\rm X}(O_{(V_K\backslash S)\cup S'}), 
\|x_v-z\|\le \epsilon_v,\; v\in S'
 \right\}.
\end{equation}
\begin{remark}
{\rm To clarify our notation somewhat, consider the  group variety $G\subset GL_n$. Fixing the finite set $S^\prime\subset V_K$, we want to establish a rate for simultaneous diophantine approximation of all  (or almost all) elements in the group $\prod_{v\in S^\prime} G_v$. The elements in the group $G(K)$ of $K$-rational points which are allowed   in the approximation process are determined by the choice of a  set $S$ containing $S^\prime$, which may be finite or infinite.  One choice is $S=S^\prime$, in which case the approximations property calls for using elements from $O_{V_K}=G(K)$, namely there are no restrictions at all on the $K$-rational matrices allowed. 
Thus for example when $S=\{v\}=S^\prime$, the approximation rate $\omega_S$ defined above is given by the function $\omega_v(x,\epsilon)$ as defined in \eqref{eq:omega}.
Another choice is $S=V_K\setminus{v_0}$, where $v_0$ is a valuation not in $S^\prime$.  In this case the approximation property calls for using only $K$-rational matrices whose elements are in $O_v$ (namely $v$-integral) for every $v\in V_K$, with the exception of $v \in S^\prime$ and $v=v_0$. 
We will of course assume that $G$ is isotropic over $K_{v_0}$ in this case.
We also admit  any other intermediate choice of $S$ containing $S^\prime$, namely we allow imposing {\it arbitrary} integrality conditions on the set of approximating $K$-rational matrices. The integrality  conditions are that the matrices should be $v$-integral for $v\in S\setminus S^\prime$, and we assume that $G$ is isotropic over  $V_K\setminus S$. 
}
\end{remark}

Given a variety $X$, 
we define the {\it exponent} of a subset $Y$ of  ${ X}_S$ as
\begin{align}\label{eq:ass_omega}
\mathfrak{a}_S(Y) &:= \inf_{\mathcal{O}\supset Y}\limsup_{h\to \infty} \frac{\log A_S(\mathcal{O},h)}{\log h},
\end{align}
where $\mathcal{O}$ runs over open neighborhoods of $Y$ in ${ X}_S$, and 
$$
A_S(\mathcal{O},h):=|\{z\in {\rm X}(K):\, \height(z)\le h,\;\; z\in\mathcal{O} \}|.
$$
More appropriately, the notation should be $\mathfrak{a}_S(Y,X)$ but we will suppress the dependence on $X$ in the notation. 
We also define the {\it exponent} $\mathfrak{a}_S({\rm X})$ 
of the variety ${\rm X}$ as the supremum of $\mathfrak{a}_S(Y)$ as $Y$ runs over bounded subsets of $X_S$.  Since our variety $X$ will be fixed and we will not consider subvarieties of it, this notation should cause no conflict. 

As in (\ref{eq:lower}), one can show that given 
$Y\subset { X}_S$ and finite $S'\subset S$ such that the projection
of $Y$ to ${ X}_{S'}$ has positive measure, we have
\begin{equation}\label{eq:lower2}
\sup_{y\in Y} \omega_S(y,(\epsilon_v)_{v\in S'})\ge \prod_{v\in S'} \epsilon_v^{-r_v \frac{\dim({\rm X})}{\mathfrak{a}_S(Y)}+\delta}
\end{equation}
for every $\delta>0$ and $\epsilon_v\in (0,\epsilon_0(Y,S',\delta))$.
It particular, it follows that we have the following universal lower bound
\begin{equation}\label{eq:lower3}
\sup_{y\in Y} \omega_S(y,(\epsilon_v)_{v\in S'})\ge \prod_{v\in S'} \epsilon_v^{-r_v \frac{\dim({\rm
      X})}{\mathfrak{a}_S({\rm X})}+\delta},
\end{equation}
provided that the projection of $Y$ to $X_{S'}$ has positive measure.

We shall derive upper estimates on the function $\omega_S$ of comparable quality
in the case when ${\rm X}$ is a homogeneous quasi-affine variety of a semisimple group.
We shall start by considering the case of the group variety itself.

Let ${\rm G}$ be a connected almost simple algebraic $K$-group.
Our main results depend on properties of unitary representations $\pi_v$
of the groups ${ G}_{v}:={\rm G}(K_v)$, which we now introduce
\ignore{We refer to Section \ref{sec:semisimple_and_spherical} below for a detailed
discussion of the structure of the group $G_S$. Here we note that $G_S$ is
a locally compact second countable group equipped with a Haar measure
$m_S$.}
We fix a suitable maximal compact subgroup $U_v$ of $G_v$,
whose choice is discussed in Section \ref{sec:semisimple_and_spherical}.
The group ${\rm G}(K)$ embeds in the restricted direct product group ${G}_{V_K}$ diagonally as a discrete subgroup 
and $\vol({ G}_{V_K}/{\rm G}(K))<\infty$ (see \cite[\S5.3]{PlaRa}).
We consider the Hilbert space $L^2({ G}_{V_K}/{\rm G}(K))$
consisting of square-integrable functions on ${ G}_{V_K}/{\rm G}(K)$.
A unitary continuous character $\chi$ of ${ G}_{V_K}$ is called automorphic if 
$\chi({\rm G}(K))=1$. Then $\chi$ can be considered as an element of $L^2({ G}_{V_K}/{\rm G}(K))$.
We denote by $L_{00}^2({ G}_{V_K}/{\rm G}(K))$ the subspace of $L^2({ G}_{V_K}/{\rm G}(K))$ orthogonal to all automorphic
characters. The translation action of the group ${ G}_{v}$ on ${ G}_{V_K}/{\rm G}(K)$ defines
the unitary representation $\pi_v$ of ${ G}_{v}$ on $L_{00}^2({ G}_{V_K}/{\rm G}(K))$.
We define the spherical integrability exponent of $\pi_v$ w.r.t. $U_v$ as follows
\begin{equation}\label{eq:q_v}
\mathfrak{q}_v({\rm G}):=\inf\left\{ q>0:\, \begin{tabular}{l}
$\forall$\hbox{ $U_v$-inv. $w\in L_{00}^2({ G}_{V_K}/{\rm G}(K))$}\\
\hbox{$\left< \pi_v(g)w,w\right>\in L^q(G_v)$}
\end{tabular}
\right\}.
\end{equation}
If $\mathfrak{q}_v({\rm G})=2$, then we say that the representation $\pi_v$ is {\em tempered} 
\footnote{This is equivalent to the standard notion of a tempered representation
defined in terms of weak containment,  by \cite[Theorem~1]{chh}.}.

It is one of the fundamental results in the theory of automorphic representations
that the integrability exponent $\mathfrak{q}_v({\rm G})$ is finite.
(see \cite[Theorem~3.1]{Clozel}).
Moreover, $\mathfrak{q}_v({\rm G})$ is uniformly bounded over $v\in V_K$ (see \cite{GMO}). 
The precise value of $\mathfrak{q}_v({\rm G})$ is 
related to generalised Ramanujan conjecture and Langlands functoriality conjectures
(see \cite{Sarnak}). For instance,  the generalised Ramanujan conjecture for $\hbox{SL}_2$
is equivalent to $\mathfrak{q}_v(\hbox{SL}_2)=2$ for all $v\in V_K$, and the best 
currently known estimate established in \cite{ks,KimSa} gives $\mathfrak{q}_v(\hbox{SL}_2)\le \frac{18}{7}$ for
general number fields $K$ and $\mathfrak{q}_v(\hbox{SL}_2)\le \frac{64}{25}$ for $K=\mathbb{Q}$.

We define the {\it exponent} of a subset $S$ of $V_K$ by
\begin{equation}\label{eq:sigma_S}
\sigma_S:=\limsup_{N\to \infty} \frac{1}{\log N} |\{v\in S:\, q_v\le N\}|,
\end{equation}
where $q_v$ denotes the cardinality of the residue field for non-Archimedean $v$.
Let
\begin{equation}\label{eq:q_s}
\mathfrak{q}_S({\rm G})=(1+\sigma_S)\sup_{v\in S} \mathfrak{q}_v({\rm G}).
\end{equation} 
This parameter will appear below as a bound on the integrability exponent of the automorphic representation restricted to the group $G_S$

We now turn to state our results, and note that the approximation property for algebraic varieties we defined above, namely the density of the closure of $X(K)$ in $X_S$ has been been  studied extensively in the setting of algebraic groups (see \cite[Ch.~7]{PlaRa}). 
It is known that when $\rm G$ is connected simply connected
almost simple algebraic group defined over $K$, then the approximation
property holds with respect to $S$ provided that $\rm G$ is isotropic over $V_K\backslash S$.
Our first result can be viewed as  a quantitative version of this fact.

It is convenient to  set $I_v=\{q_v^{-n}\}_{n\ge 1}$ for non-Archimedean $v\in V_K$
and $I_v=(0,1)$ for Archimedean $v\in V_K$.

\begin{Theorem}\label{th:g1}
Let $G$ be a connected simply connected almost simple algebraic $K$-group and
$S$ a (possibly infinite) subset of $V_K$ such that $G$ is isotropic over $V_K\backslash S$. Then
\begin{enumerate}
\item[(i)] There exists a subset $Y$ of full measure in ${ G}_S$ such that
for every $\delta>0$, finite $S'\subset S$,  $x\in Y$, and $(\epsilon_v)_{v\in S'}$ with 
$\epsilon_v\in I_v\cap (0,\epsilon_0(x,S',\delta))$, we have
$$
\omega_S(x,(\epsilon_v)_{v\in S'})\le \left(
\prod_{v\in S'} \epsilon_v^{-r_v \frac{\dim({\rm G})}{\mathfrak{a}_S({\rm
      G})}-\delta}\right)^{\mathfrak{q}_{V_K\backslash S}({\rm G})/2}.
$$
In particular, when the representations $\pi_v$, $v\in V_K\backslash S$, are tempered
and $\sigma_S=0$, then the above exponent is the 
{\rm  best possible} (cf. (\ref{eq:lower2}))!
\item[(ii)]
For every $\delta>0$, bounded $\Omega\subset { G}_S$, and
$(\epsilon_v)_{v\in S'}$ with finite $S'\subset S$ and $\epsilon_v\in I_v\cap (0,\epsilon_v^0(\Omega,\delta))$,
where $\epsilon_v^0(\Omega,\delta)\in (0,1]$ and $\epsilon_v^0(\Omega,\delta)=1$ for almost all $v$,
we have
$$
\omega_S(x,(\epsilon_v)_{v\in S'})\le \left(
\prod_{v\in S'} \epsilon_v^{-r_v \frac{\dim({\rm G})}{\mathfrak{a}_S({\rm G})}-\delta}\right)^{\mathfrak{q}_{V_K\backslash S}({\rm G})},
$$
uniformly over  $x\in \Omega$ and finite $S'\subset S$.
\end{enumerate}
\end{Theorem}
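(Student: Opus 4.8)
The plan is to reduce the problem to a counting statement for rational points of bounded height lying near a given $x \in G_S$, and to prove that counting statement via equidistribution of Hecke-type orbits governed by the automorphic spectrum. Concretely, fix $x = (x_v)_{v\in S}$ and $(\epsilon_v)_{v\in S'}$. A point $z \in G(O_{(V_K\setminus S)\cup S'})$ with $\|x_v - z\|_v \le \epsilon_v$ for $v\in S'$ and $\height(z) \le h$ corresponds to an element of $G(K)$ lying in a small neighbourhood of $x$ in the places of $S'$, integral away from $S$, and of height $\le h$. The task is therefore to show that as soon as $h$ is slightly larger than $\prod_{v\in S'}\epsilon_v^{-r_v \dim(G)/\mathfrak{a}_S(G)}$ raised to the appropriate spectral power, at least one such $z$ exists for every $x$ in a full-measure set (part (i)) or every $x$ in a bounded set (part (ii)).

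The main device I would use is the following. Let $\Omega_v \subset G_v$ be the $\epsilon_v$-ball around $x_v$ for $v\in S'$, and let $B_h \subset G_{V_K\setminus S}$ be the set of $g$ whose height-contribution from the places outside $S$ is $\le h$ (these are the integrality and height constraints). The number of relevant $z$ is essentially
\[
N(x,h) \;=\; \#\bigl\{\gamma \in G(K) : \gamma \in \Omega_{S'} \times B_h \times (\text{rest})\bigr\}.
\]
By unfolding over the lattice $G(K) \subset G_{V_K}$ and using that $\vol(G_{V_K}/G(K)) < \infty$, one writes $N(x,h)$ as an inner product of the characteristic function of $\Omega_{S'}\times B_h$ with its translate, decomposed along the spectral decomposition $L^2 = (\text{automorphic characters}) \oplus L^2_{00}$. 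The main term comes from the one-dimensional automorphic characters (which for a semisimple simply connected group reduce essentially to the constant), contributing a volume term of size $\mathrm{vol}(\Omega_{S'})\cdot \mathrm{vol}(B_h) \asymp \prod_{v\in S'}\epsilon_v^{r_v\dim G}\cdot h^{\mathfrak{a}_S(G)}$ (this is exactly where the exponent $\mathfrak{a}_S(G)$ enters, via the asymptotics of $A_S(\mathcal{O},h)$). The error term is controlled by the operator norm / decay of matrix coefficients of $\pi_v$ on $L^2_{00}$, which is quantified by $\mathfrak{q}_v(G)$; taking tensor products over $v\in S$ and absorbing the contribution of infinitely many finite places produces the factor $(1+\sigma_S) = $ the source of $\mathfrak{q}_S(G)$, while the places of $V_K\setminus S$ govern the strength of the smoothing estimate and hence the exponent $\mathfrak{q}_{V_K\setminus S}(G)/2$ in (i) (an $L^2$-bound on the relevant projection, square-rooted) versus $\mathfrak{q}_{V_K\setminus S}(G)$ in (ii) (a pointwise bound valid uniformly, hence without the square root). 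The passage from ``$N(x,h) > 0$'' to the stated bound on $\omega_S$ is then immediate.

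The technical steps, in order, are: (1) set up the adelic counting integral and the spectral decomposition, isolating the automorphic-character contribution as the main term; (2) establish the volume asymptotics $\mathrm{vol}(B_h) \asymp h^{\mathfrak{a}_S(G)+o(1)}$, relating the exponent $\mathfrak{a}_S(G)$ defined via point-counting to the volume growth — this uses the structure of $G$ as a quasi-affine variety and the height function \eqref{eq:height}; (3) bound the $L^2_{00}$ error term using the integrability exponent $\mathfrak{q}_v(G)$ and the finiteness/uniformity results cited after \eqref{eq:q_v} (Clozel, GMO), via a Kunze–Stein / Harish-Chandra $\Xi$-function estimate and a tensor-product argument over the places of $S$, the $\sigma_S$ contribution arising from the divergence of the product over finite places; (4) for part (i), use a Borel–Cantelli or maximal-inequality argument along the sequence $\epsilon_v \in I_v$ to upgrade the mean estimate to an almost-everywhere pointwise statement with the improved exponent $\mathfrak{q}_{V_K\setminus S}(G)/2$; for part (ii), run the argument with uniform (sup-norm) control over the bounded set $\Omega$, at the cost of the weaker exponent $\mathfrak{q}_{V_K\setminus S}(G)$. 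The main obstacle I anticipate is step (3) combined with (4): making the spectral error term explicit with the correct power of the integrability exponent — in particular handling the infinite set of places $S$ simultaneously (where one must control how $\sigma_S$ enters and ensure the product over finite places of the local decay estimates converges appropriately) — and then converting the resulting $L^2$ or $L^\infty$ mean bound into the sharp almost-everywhere exponent via the right maximal inequality, so that temperedness ($\mathfrak{q}_v = 2$) together with $\sigma_S = 0$ recovers exactly the lower bound \eqref{eq:lower2}.
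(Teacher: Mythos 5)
Your proposal follows the paper's overall strategy in its essentials: translate the approximation problem into a counting/equidistribution statement on $\Upsilon = G_{V_K}/G(K)$, use the spectral decomposition into automorphic characters plus $L^2_{00}$, relate $\mathfrak{a}_S(G)$ to the volume growth of $B_h$ (this is Lemma~\ref{l:ass}), control the error via the integrability exponent and the tensor-product argument yielding $\mathfrak{q}_S(G)=(1+\sigma_S)\sup_v\mathfrak{q}_v(G)$ (Proposition~\ref{p:lp}), and run Borel--Cantelli along $\epsilon_v\in I_v$ for the a.e.\ statement in (i). The paper packages the counting step as a duality principle (Proposition~\ref{p:dual}: constructing target sets $\Phi_\epsilon$ of measure $\gg\prod_v\epsilon_v^{r_v\dim G}$ such that $B_h^{-1}\varsigma\cap\Phi_\epsilon\ne\emptyset$ forces a good rational point of height $\ll h$) rather than as a direct Siegel-transform second moment, but these are equivalent formulations.

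Where the proposal genuinely falls short is part (ii). You say one should ``run the argument with uniform (sup-norm) control over the bounded set $\Omega$, at the cost of the weaker exponent $\mathfrak{q}_{V_K\setminus S}(G)$,'' as though the factor of two were simply ``$L^2$ square-rooted vs.\ pointwise without square root.'' That is not how it works, and without a further idea your argument cannot produce a conclusion valid at \emph{every} $x\in\Omega$: the $L^2$ mean ergodic theorem (Theorem~\ref{cor:mean_simply connected}) plus Chebyshev only tells you the bad set $\Upsilon_\epsilon$ has small measure, and a fixed $\varsigma$ might lie in $\Upsilon_\epsilon$ for all $\epsilon$. The paper's fix is the thickened duality of Proposition~\ref{p:dual2}: one replaces the condition ``$B_{h_\epsilon}^{-1}\varsigma$ meets $\Phi_\epsilon$'' by ``$B_{h_\epsilon}^{-1}\tilde\Psi_\epsilon^{-1}\varsigma$ meets $\tilde\Psi_\epsilon G(K)$,'' and proves that this weaker event still yields a rational $z$ with $\|x_v-z\|_v\le\epsilon_v$ and $\height(z)\ll h_\epsilon$. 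Because $\tilde\Psi_\epsilon^{-1}\varsigma$ is a small ball of measure $\gg\prod_v\epsilon_v^{r_v\dim G}$, the event holds as soon as $\mu(\Upsilon_\epsilon)<\mu(\tilde\Psi_\epsilon^{-1}\varsigma)$, since otherwise the ball would be contained in the bad set. Plugging in the Chebyshev bound $\mu(\Upsilon_\epsilon)\ll h_\epsilon^{-2\mathfrak{a}_S(G)/\mathfrak{q}+2\delta'}\mu(\Psi_\epsilon)^{-1}$ and demanding this be $<\prod_v\epsilon_v^{r_v\dim G}$ is exactly what forces $h_\epsilon$ up to the exponent $\mathfrak{q}$ instead of $\mathfrak{q}/2$. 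So the factor of two is a measure-comparison cost, not an $L^2$-versus-$L^\infty$ distinction, and the thickening lemma is the essential missing ingredient in your sketch of (ii).
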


\begin{remark}
{\rm 
Comparing the estimate in Theorem \ref{th:g1}(i) and (\ref{eq:lower2}), we conclude that
the integrability exponents always satisfy $\mathfrak{q}_v({\rm G})\ge 2$. While this fact
was previously known (see, for instance, \cite{BLS}),
it is curious that it follows from diophantine approximation considerations as well.
This point is further addressed in Corollary \ref{cor:q_v} below.
}
\end{remark}

We also prove a version of Theorem \ref{th:g1}(i) which is uniform over finite $S'\subset S$.

\begin{Theorem}\label{th:g2}
With the notation as in Theorem \ref{th:g1}, 
there exists a subset $Y$ of full measure in ${G}_S$ such that
for every $\delta>0$, $x\in Y$, and $(\epsilon_v)_{v\in S'}$ with finite $S'\subset S$ and
$\epsilon_v\in I_v\cap (0,\epsilon_v^0(x,\delta))$, where
$\epsilon_v^0(x,\delta)\in (0,1]$ and $\epsilon_v^0(x,\delta)=1$ for almost all $v$,
we have
$$
\omega_S(x,(\epsilon_v)_{v\in S'})\le \left(
\prod_{v\in S'} \epsilon_v^{-\frac{r_v\dim({\rm G})+\sigma_S}{\mathfrak{a}_S({\rm G})}-\delta}\right)^{\mathfrak{q}_{V_K\backslash S}({\rm G})/2}.
$$
\end{Theorem}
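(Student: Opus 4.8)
The plan is to derive Theorem~\ref{th:g2} from the effective counting estimate that underlies Theorem~\ref{th:g1}(i), the one new feature being that the Borel--Cantelli step is run \emph{simultaneously} over all finite subsets $S'\subset S$; the price of this uniformity is exactly the extra term $\sigma_S$. First I would rephrase the problem as a lattice point count. Embed $G(K)$ as a lattice in $G_{V_K}=G_S\times G_{V_K\backslash S}$, abbreviate $\vec\epsilon=(\epsilon_v)_{v\in S'}$, and for finite $S'\subset S$, $\epsilon_v\in I_v$ and $h\ge1$ let $A(x,\vec\epsilon,h)$ be the number of $z\in G(K)$ that are $v$-integral at the non-Archimedean $v\in S\setminus S'$, satisfy $\|x_v-z\|_v\le\epsilon_v$ for $v\in S'$, and have $\height(z)\le h$; by \eqref{eq:omega_S}, $A(x,\vec\epsilon,h)\ge1$ forces $\omega_S(x,\vec\epsilon)\le h$. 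This count equals the number of points of $G(K)$ in $\mathcal B_{x,\vec\epsilon}\times B_h\subset G_S\times G_{V_K\backslash S}$, where $\mathcal B_{x,\vec\epsilon}\subset G_S$ is the product of the $\epsilon_v$-balls about $x_v$ ($v\in S'$) with the maximal compact subgroups at the remaining places of $S$, and $B_h\subset G_{V_K\backslash S}$ is the height-$\le h$ region; here $\vol(\mathcal B_{x,\vec\epsilon})\asymp\prod_{v\in S'}\epsilon_v^{r_v\dim(G)}$ while $\vol(B_h)=h^{\mathfrak{a}_S(G)+o(1)}$, with implied constants depending on $G,K,S$ but not on $S'$, $x$, $\vec\epsilon$ or $h$.

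The analytic input is the effective mean ergodic theorem for $G_{V_K\backslash S}$ acting on $L^2_{00}(G_{V_K}/G(K))$, whose spherical integrability exponent is bounded by $\mathfrak{q}_{V_K\backslash S}(G)$ --- this is where \eqref{eq:q_s}, and in particular the factor $1+\sigma_{V_K\backslash S}$, enters. Feeding it into the count exactly as for Theorem~\ref{th:g1}(i) gives, for every $\eta>0$ and every fixed bounded $\Omega\subset G_S$, a second moment estimate
\[
\int_{\Omega}\bigl|A(x,\vec\epsilon,h)-c\,\vol(B_h)\prod_{v\in S'}\epsilon_v^{r_v\dim(G)}\bigr|^2\,dx\quad\ll_\eta\quad\vol(B_h)^{2-2/\mathfrak{q}_{V_K\backslash S}(G)+\eta}\prod_{v\in S'}\epsilon_v^{r_v\dim(G)},
\]
with constants uniform in the finite set $S'\subset S$ and in $\vec\epsilon$, $h$. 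Fix a slack parameter $\tau>0$ and, for a grid value $\vec\epsilon=(\epsilon_v^{(n_v)})_{v\in S'}$ with $n_v\ge1$ (the grid being $I_v$ itself at non-Archimedean $v$, and a fixed geometric progression at the finitely many Archimedean $v\in S$), set
\[
h(S',\vec n):=\Bigl(\prod_{v\in S'}\bigl(\epsilon_v^{(n_v)}\bigr)^{-(r_v\dim(G)+\tau)/\mathfrak{a}_S(G)}\Bigr)^{\mathfrak{q}_{V_K\backslash S}(G)/2}.
\]
Chebyshev's inequality then bounds the measure of the bad set $E(S',\vec n)\subset\Omega$ --- those $x$ admitting no admissible $z$ of height $\le h(S',\vec n)$ --- by $\mu(E(S',\vec n))\ll\prod_{v\in S'}q_v^{-\tau n_v}$ (with $q_v$ replaced by a fixed $c>1$ at Archimedean places).

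The crux is the convergence of $\sum_{S'}\sum_{\vec n}\mu(E(S',\vec n))$, where the outer sum is over \emph{all} finite $S'\subset S$. Since $\sum_{\vec n\in(\Z_{\ge 1})^{S'}}\prod_{v\in S'}q_v^{-\tau n_v}=\prod_{v\in S'}(q_v^{\tau}-1)^{-1}$, summing over $S'$ yields the Euler product $\prod_{v\in S}(1-q_v^{-\tau})^{-1}$, which is finite precisely when $\sum_{v\in S}q_v^{-\tau}<\infty$; by the definition \eqref{eq:sigma_S} of $\sigma_S$ this holds if and only if $\tau>\sigma_S$. For such $\tau$, Borel--Cantelli --- applied on a countable exhaustion of $G_S$ by bounded sets $\Omega$ and then intersected --- produces a conull set $Y_\tau\subset G_S$ each of whose points lies in only finitely many $E(S',\vec n)$. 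A point $x\in Y_\tau$ lies in finitely many bad pairs, involving only finitely many places; taking $\epsilon_v^0(x)$ to be a grid point lying below every bad scale occurring at $v$, and $\epsilon_v^0(x)=1$ for the cofinitely many $v$ that do not occur, the monotonicity of $\omega_S$ in $\vec\epsilon$ together with rounding $\vec\epsilon$ down to the grid (vacuous at non-Archimedean $v$, costing a bounded factor at the finitely many Archimedean $v\in S$) gives, for every finite $S'\subset S$ and every $\vec\epsilon$ with $\epsilon_v\in I_v\cap(0,\epsilon_v^0(x))$,
\[
\omega_S(x,\vec\epsilon)\le h(S',\vec n)\quad\ll\quad\Bigl(\prod_{v\in S'}\epsilon_v^{-(r_v\dim(G)+\tau)/\mathfrak{a}_S(G)}\Bigr)^{\mathfrak{q}_{V_K\backslash S}(G)/2}.
\]
Letting $\tau\downarrow\sigma_S$, absorbing both the $o(1)$ coming from $\vol(B_h)$ and the gap $\tau-\sigma_S$ into $\delta$, and intersecting the conull sets obtained for a sequence $\delta=1/k\downarrow0$, yields the single full-measure set $Y$ of the theorem together with the thresholds $\epsilon_v^0(x,\delta)\in(0,1]$, equal to $1$ for almost all $v$.

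Granting the counting machinery behind Theorem~\ref{th:g1}, the only genuinely new point requiring care is the uniformity in $S'$ of the implied constants in the second moment estimate; this should be routine, since enlarging $S'$ merely relaxes integrality conditions and changes neither the ambient group $G_{V_K}$, nor the lattice $G(K)$, nor the representation $L^2_{00}(G_{V_K}/G(K))$ and its integrability exponent along $G_{V_K\backslash S}$, so that the covolume, the well-roundedness of the height balls, and the spectral gap controlling the error term are all independent of $S'$. The second, bookkeeping, point is that it is precisely $\sigma_S$, and not merely some positive constant, that enters the exponent: this is forced by identifying, through the Euler product above, the abscissa of convergence of $\sum_{v\in S}q_v^{-\tau}$ with the parameter $\sigma_S$ of \eqref{eq:sigma_S}.
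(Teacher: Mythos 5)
Your proposal is correct and follows essentially the same route as the paper: both start from the duality-principle reduction of $\omega_S(x,\vec\epsilon)\le h$ to a nonempty-intersection condition on $G_{V_K}/G(K)$, apply the effective mean ergodic theorem (with spherical exponent $\mathfrak q_{V_K\backslash S}(G)$) to bound the measure of the bad set $\Upsilon_\epsilon$ by $\prod_{v\in S'}\epsilon_v^{\theta_v}$ with $\theta_v>\sigma_S$, and then run Borel--Cantelli simultaneously over all finite $S'\subset S$ via the convergent Euler product $\prod_{v\in S}(1-q_v^{-\theta})^{-1}$. Your $\tau$-slack parameter is a cosmetic repackaging of the paper's choice of $\delta'$ ensuring $\theta_v\ge\theta>\sigma_S$, and your second-moment/Chebyshev phrasing is the $L^2$-estimate of $\pi_{V_K\backslash S}(\beta_{h_\epsilon})\phi_\epsilon-\mu(\Phi_\epsilon)$ in the paper, so the argument is the same.
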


More generally, let ${\rm X}\subset\mathbb{A}^n$ be a quasi-affine algebraic variety defined over $K$ equipped with 
a transitive action of a  connected almost simple algebraic $K$-group ${\rm G}\subset\hbox{GL}_n$.
Given a subset $S$ of $V_K$, we consider the problem of Diophantine approximation 
in $X_S$ by the rational points in ${\rm X}(K)$, or equivalently 
the problem of Diophantine approximation in $X_{S'}$, where $S'$ is a finite subset of $S$,
by points in  ${\rm X}(O_{(V_K\backslash S)\cup S'})$.
The closure $\overline{{\rm X}(O_{(V_K\backslash S)\cup S'})}$ in ${\rm X}_{S'}$ 
can be described explicitly (see Lemma \ref{l:closure} below).
In particular, it follows that $\overline{{\rm X}(O_{(V_K\backslash S)\cup S'})}$ is open ${\rm X}_{S'}$
provided that ${\rm G}$ is isotropic over $V_K\backslash S$. 
Our next result gives a quantitative
version of the density (in the closure) where the estimates are sharp in many cases (see Example \ref{eq:first} and
Section \ref{sec:examples}).

\begin{Theorem}\label{th:g3}
Assume that ${\rm G}$ is isotropic over $V_K\backslash S$. Then
\begin{enumerate}
\item[(i)] For every finite $S'\subset S$, there exists a subset $Y$ of full measure 
in  $\overline{{\rm X}(O_{(V_K\backslash S)\cup S'})}\subset { X}_{S'}$
such that for every $\delta>0$, $x\in Y$, and $(\epsilon_v)_{v\in S'}$ with 
$\epsilon_v\in I_v\cap (0,\epsilon_0(x,S',\delta))$, we have
$$
\omega_S(x,(\epsilon_v)_{v\in S'})\le \left(
\prod_{v\in S'} \epsilon_v^{-r_v \frac{\dim({\rm X})}{\mathfrak{a}_S({\rm G})}-\delta}\right)^{\mathfrak{q}_{V_K\backslash S}({\rm G})/2}.
$$
\item[(ii)]
For every finite $S'\subset S$, $\delta>0$, bounded $Y\subset \overline{{\rm X}(O_{(V_K\backslash S)\cup S'})}$, and
$(\epsilon_v)_{v\in S'}$ with $\epsilon_v\in I_v\cap(0,\epsilon_0(Y,S',\delta))$, we have
$$
\omega_S(x,(\epsilon_v)_{v\in S'})\le \left(
\prod_{v\in S'} \epsilon_v^{-r_v \frac{\dim({\rm X})}{\mathfrak{a}_S({\rm G})}-\delta}\right)^{\mathfrak{q}_{V_K\backslash S}({\rm G})},
$$
uniformly over  $x\in Y$.
\end{enumerate}
\end{Theorem}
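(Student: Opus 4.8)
The plan is to prove Theorem~\ref{th:g3} by the argument used for the group variety in Theorem~\ref{th:g1}, transported from ${\rm G}$ to the homogeneous space ${\rm X}$ through the orbit map; one cannot simply quote Theorem~\ref{th:g1} applied to a lift of the target in ${\rm G}$, since that would produce $\dim({\rm G})$ in place of $\dim({\rm X})$ in the exponent. Fix $x_0\in{\rm X}(K)$ and write ${\rm X}={\rm G}/{\rm H}$ with ${\rm H}$ the stabiliser of $x_0$; the existence of such $x_0$ and, more precisely, the description of $\overline{{\rm X}(O_{(V_K\setminus S)\cup S'})}$ as an open subset of $X_{S'}$ on which we shall locate the full–measure set $Y$, come from Lemma~\ref{l:closure} together with the isotropy hypothesis. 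Two comparison facts set up the dictionary between ${\rm X}$ and ${\rm G}$. First, $\dim({\rm X})=\dim({\rm G})-\dim({\rm H})$, and the height $\height(g\cdot x_0)$ is comparable, up to fixed positive powers, to a proper norm on ${\rm G}(K)$; hence $\height(z)\le h$ for $z=\gamma\cdot x_0$ is equivalent, up to harmless distortion, to requiring the lattice element $\gamma$ to lie in a ball $B_h\subset\prod_{v\in V_K\setminus S}G_v$ with $\vol(B_h)\asymp h^{\mathfrak{a}}$ for the associated volume–growth exponent $\mathfrak{a}$, which one identifies with $\mathfrak{a}_S({\rm G})$ (so that the lattice count and the volume have matching exponents). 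Second, motion within an $H_v$–fibre does not change the image point in ${\rm X}(K_v)$, so an $\epsilon_v$–neighbourhood of $x_v$ in ${\rm X}(K_v)$ pulls back to a subset of $G_v$ that is $\epsilon_v$–small in $\dim({\rm X})$ directions, contributing a factor $\asymp\epsilon_v^{r_v\dim({\rm X})}$ to volumes.

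The heart of the matter is a lower bound for the number of rational points of bounded height near a prescribed target. Given $x=(x_v)_{v\in S'}\in X_{S'}$, choose $g_v\in G_v$ with $g_v\cdot x_0=x_v$. A point $z\in{\rm X}(O_{(V_K\setminus S)\cup S'})$ with $\|z-x_v\|_v\le\epsilon_v$ for $v\in S'$ and $\height(z)\le h$ corresponds to an element $\gamma$ of the $S$–arithmetic lattice lying in $B_h$ at the places of $V_K\setminus S$, in an $\epsilon_v$–neighbourhood of $g_v{\rm H}(K_v)$ at the places of $S'$, and in the fixed maximal compact $U_v$ elsewhere. One counts such $\gamma$ by feeding a smoothing of the indicator of this region into the effective ergodic theorem / lattice–point estimate for the automorphic representation $\pi$ on $L^2_{00}(G_{V_K}/{\rm G}(K))$: the projection onto the automorphic characters yields a main term of size $\asymp\vol(B_h)\prod_{v\in S'}\epsilon_v^{r_v\dim({\rm X})}\asymp h^{\mathfrak{a}_S({\rm G})}\prod_{v\in S'}\epsilon_v^{r_v\dim({\rm X})}$, while the orthogonal complement contributes an error controlled by the operator norm of $\pi(B_h)$ on $L^2_{00}$, which decays in $\vol(B_h)$ with a power that is an explicit function of $\mathfrak{q}_{V_K\setminus S}({\rm G})$, at the cost of an inverse–polynomial factor in the $\epsilon_v$ from the smoothing (absorbed into the loss $\delta$). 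The count is positive — hence a valid $z$ exists — once the main term dominates the error, and a short optimisation shows this occurs as soon as
\[
h\;\ge\;\left(\prod_{v\in S'}\epsilon_v^{-r_v\dim({\rm X})/\mathfrak{a}_S({\rm G})-\delta}\right)^{\mathfrak{q}_{V_K\setminus S}({\rm G})/2}
\]
in the averaged regime of part~(i), and with exponent $\mathfrak{q}_{V_K\setminus S}({\rm G})$ in place of $\mathfrak{q}_{V_K\setminus S}({\rm G})/2$ in the uniform regime of part~(ii); this is precisely the asserted bound on $\omega_S(x,(\epsilon_v)_{v\in S'})$.

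The distinction between parts~(i) and~(ii) is in how the error is measured. For~(ii) one uses the pointwise bound on the counting function, uniform over the bounded set $Y$, which loses the full $L^2$–mass $\asymp\prod_{v}\epsilon_v^{-r_v\dim({\rm X})}$ of the smoothing bump and hence produces the exponent $\mathfrak{q}_{V_K\setminus S}({\rm G})$. For~(i) one instead estimates the $L^2(X_{S'})$–norm of the counting error regarded as a function of the target $x$, using unitarity of $\pi$; this gains a square root, hence the exponent $\mathfrak{q}_{V_K\setminus S}({\rm G})/2$. The full–measure set $Y$ is then produced by Borel–Cantelli: one sums the measures of the exceptional sets over the countable family of tuples $(\epsilon_v)\in\prod_{v\in S'}I_v$ — which is exactly why the discretised ranges $I_v=\{q_v^{-n}\}$, resp.\ $(0,1)$, are imposed — and dyadic values of $h$, the power saving making the sum finite. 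When $\sigma_S=0$ and the $\pi_v$, $v\in V_K\setminus S$, are tempered, the resulting exponent matches the lower bound~\eqref{eq:lower2}, giving sharpness.

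I expect the main obstacle to be the effective counting step carried out with full uniformity: establishing the power–saving error bound simultaneously in the large ball at $V_K\setminus S$, the small balls at $S'$, and the possibly infinite set $S$ requires well–roundedness / Lipschitz regularity of the height balls on ${\rm X}$, a quantitative Sobolev–type estimate for the loss incurred in smoothing the indicator of an $\epsilon_v$–ball, and careful tracking of the dependence of all implied constants on $S'$ and on the target (uniform over a bounded set in part~(ii), permitted to depend on $x$ off a null set in part~(i)). A secondary point requiring care is the passage through the fibration ${\rm G}\to{\rm X}$ — in particular verifying that $\mathfrak{a}_S({\rm G})$ is indeed the exponent governing the count of $h$–bounded rational points on ${\rm X}$ — which is where Lemma~\ref{l:closure} and the height comparison enter.
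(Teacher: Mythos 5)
Your high-level strategy — transport from $\Upsilon = G_{V_K}/{\rm G}(K)$ to ${\rm X}$ through the orbit map, feed a $\prod_v\epsilon_v^{r_v\dim({\rm X})}$-sized set and a height ball $B_h$ with $\vol(B_h)\gg h^{\mathfrak{a}_S({\rm G})-\delta}$ into an effective ergodic/counting estimate controlled by $\mathfrak{q}_{V_K\backslash S}({\rm G})$, then close with Chebyshev plus Borel--Cantelli, and separate (i) from (ii) by an $L^2$ versus pointwise argument — matches the paper's approach, and you are right that the volume gain of $\dim({\rm X})$ rather than $\dim({\rm G})$ is the whole point of the fibred construction. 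But there is a substantive gap you do not address.

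In Theorem~\ref{th:g3}, unlike Theorem~\ref{th:g1}, ${\rm G}$ is not assumed simply connected — in the motivating examples it is ${\rm SO}_{d+1}$ or ${\rm D}^\times/{\rm Z}^\times$, which have nontrivial automorphic characters. So one cannot invoke the simply connected mean ergodic theorem and declare the ``main term'' to be $\vol(B_h)\prod_v\epsilon_v^{r_v\dim({\rm X})}$: the operator $\pi(\beta_h)$ does not converge to the projection onto constants, but to the projection onto the span of $U$-invariant automorphic characters. The paper handles this by passing to the simply connected cover $p:\tilde{\rm G}\to{\rm G}$ (so that $\overline{{\rm X}(O_{(V_K\backslash S)\cup S'})}$ is a finite union of open $p(\tilde G_{S'})$-orbits, Lemma~\ref{l:closure}), introducing $U^0=\prod(U_v\cap{\rm G}(O_v))$, $U=U_{V_K^f\backslash S}U^0$, the finite-index kernel $G^U$ of the $U$-invariant automorphic characters (Lemmas~\ref{l:normal},~\ref{l:characters}), averaging over $B_h'=U^0(B_h\cap G^U)$ instead of $B_h$, and proving the mean ergodic theorem with a modified main term $(\int\phi\,d\mu)\xi_U$ for functions supported on $G^U/{\rm G}(K)$ (Theorem~\ref{c:mean}). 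One must then check that the sets $\Phi_\epsilon$, resp.\ $\tilde\Psi_\epsilon$, can be arranged to sit inside $G^U$, and that the loss from $B_h$ to $B_h\cap G^U$ is only a bounded factor (Lemma~\ref{l:finite}). Without all this, the argument you outline would produce a main term contaminated by automorphic characters and the conclusion would fail for non-simply-connected ${\rm G}$.

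Two secondary remarks. First, you propose smoothing the indicator and invoking a Sobolev-type estimate; the paper avoids smoothing entirely: the mean ergodic theorem applies directly to the indicator $\phi_\epsilon$ of a set $\Phi_\epsilon\subset\Upsilon$, and the Diophantine statement is extracted through a precise duality (Propositions~\ref{p:dual1_prime} and~\ref{p:dual2_prime}) translating ``$B_h^{-1}\varsigma$ meets $\Phi_\epsilon$'' into ``there is $z\in{\rm X}(O_{(V_K\backslash S)\cup S'})$ with $\|x_v-z\|_v\le\epsilon_v$ and $\height(z)\ll h$''. Second, for part~(ii) your description (``pointwise bound on the counting function'') glosses over the device that actually produces the halved exponent loss: the paper thickens the target $\varsigma$ by $\tilde\Psi_\epsilon^{-1}$ and compares $\mu(\tilde\Psi_\epsilon^{-1}\varsigma)$ against $\mu(\Upsilon_\epsilon)$; when the former exceeds the latter, the thickened orbit cannot lie entirely in the exceptional set, so a good approximant exists. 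This volume comparison is what converts the $L^2$ Chebyshev bound into a statement at \emph{every} $x\in Y$, and it is not the same as taking an $L^\infty$ bound of the counting error.

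A minor inaccuracy: you assert that $\height(g\cdot x_0)$ is ``comparable, up to fixed positive powers'' to a norm on ${\rm G}(K)$. Only the one-sided estimate $\height(\gamma x^0)\ll\height(\gamma)$ is available and is what the argument actually needs; the reverse inequality is false in general, and the volume lower bound for $B_h$ is taken directly on ${\rm G}$ via Lemma~\ref{l:ass}, sidestepping the issue.
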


\begin{remark}{\rm 
We note that the estimates in Theorem \ref{th:g3} are stated in terms of 
$\mathfrak{a}_S({\rm G})$, but not in terms of $\mathfrak{a}_S({\rm X})$.
While the latter quantity is difficult to compute in general, 
in many cases we have $\mathfrak{a}_S({\rm G})\ge \mathfrak{a}_S({\rm X})$.
For instance, this is so  when the rational points in ${\rm G}$ do not concentrate
on a proper subgroup of ${\rm G}$, namely when $\mathfrak{a}_S({\rm G})>\mathfrak{a}_S(\hbox{Stab}_{\rm
  G}(x^0))$ for some $x^0\in {\rm X}(O_{(V_K\backslash S)\cup S'})$.
If $\mathfrak{a}_S({\rm G})\ge \mathfrak{a}_S({\rm X})$ and 
$\mathfrak{q}_{V_K\backslash S}({\rm G})=2$, then Theorem \ref{th:g3}(i) gives the best
possible estimate (cf. \eqref{eq:lower3}).
}
\end{remark}

Comparing Theorem \ref{th:g3}(i) with (\ref{eq:lower3}), we deduce the lower estimates on the
integrability exponents $\mathfrak{q}_v({\rm G})$.

\begin{Cor}\label{cor:q_v}
Let $G<{\rm GL}_n$ be a connected almost simple algebraic $K$-group
and ${\rm X}\subset\mathbb{A}^n$ a quasi-affine
variety on which ${\rm G}$ acts transitively. 
Assume that ${\rm G}$ is isotropic over $v\in V_K$ and that
${\rm X}(O_{V_K\backslash \{v\}})$ is not empty. Then
$$
\mathfrak{q}_v({\rm G})\ge   \frac{2 \mathfrak{a}_{V_K\backslash \{v\}}({\rm G})}{\mathfrak{a}_{V_K\backslash \{v\}}({\rm X})}.
$$
\end{Cor}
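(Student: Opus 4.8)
The plan is to extract Corollary~\ref{cor:q_v} directly from the combination of the upper bound in Theorem~\ref{th:g3}(i) with the universal lower bound~\eqref{eq:lower3}, applied in the special case $S=V_K\setminus\{v\}$, so that $S'=\{v\}$ and $V_K\setminus S=\{v\}$. In this situation $\mathfrak{q}_{V_K\backslash S}({\rm G})=\mathfrak{q}_{\{v\}}({\rm G})$, and since $\sigma_{\{v\}}=0$ we have $\mathfrak{q}_{\{v\}}({\rm G})=\mathfrak{q}_v({\rm G})$. The isotropy hypothesis on ${\rm G}$ over $v=V_K\setminus S$ is exactly what is required to invoke Theorem~\ref{th:g3}, and the nonemptiness of ${\rm X}(O_{V_K\backslash\{v\}})$ guarantees that $\overline{{\rm X}(O_{(V_K\backslash S)\cup S'})}=\overline{{\rm X}(O_{V_K})}$ is a nonempty open subset of ${\rm X}_{\{v\}}={\rm X}(K_v)$, hence has positive measure, so both estimates apply to a common set $Y$.

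The argument proceeds as follows. First I would fix $v$ as in the statement and a small ball $Y\subset\overline{{\rm X}(O_{V_K\backslash\{v\}})}\subset{\rm X}(K_v)$ of positive measure, chosen to lie in the full-measure set supplied by Theorem~\ref{th:g3}(i); then for any point $x$ in the (still positive-measure) intersection, Theorem~\ref{th:g3}(i) gives, for every $\delta>0$ and $\epsilon\in I_v\cap(0,\epsilon_0)$,
$$
\omega_{V_K\backslash\{v\}}(x,\epsilon)\le \epsilon^{-\left(r_v\frac{\dim({\rm X})}{\mathfrak{a}_{V_K\backslash\{v\}}({\rm G})}+\delta\right)\mathfrak{q}_v({\rm G})/2}.
$$
On the other hand, since $Y$ has positive measure in ${\rm X}_{\{v\}}$, the lower bound~\eqref{eq:lower3} (with $X$ in place of $G$, using $\mathfrak{a}_{V_K\backslash\{v\}}({\rm X})$ in the exponent as defined in~\eqref{eq:ass_omega}, and noting $\mathfrak{a}_S(Y)\le\mathfrak{a}_S({\rm X})$) yields, after passing to the supremum over $y\in Y$,
$$
\sup_{y\in Y}\omega_{V_K\backslash\{v\}}(y,\epsilon)\ge \epsilon^{-r_v\frac{\dim({\rm X})}{\mathfrak{a}_{V_K\backslash\{v\}}({\rm X})}+\delta}.
$$
Combining the two displays and taking $\epsilon\to 0^+$ along $I_v$ forces
$$
\left(r_v\frac{\dim({\rm X})}{\mathfrak{a}_{V_K\backslash\{v\}}({\rm G})}+\delta\right)\frac{\mathfrak{q}_v({\rm G})}{2}\ \ge\ r_v\frac{\dim({\rm X})}{\mathfrak{a}_{V_K\backslash\{v\}}({\rm X})}-\delta ,
$$
and letting $\delta\to 0$ and cancelling the common factor $r_v\dim({\rm X})>0$ gives $\mathfrak{q}_v({\rm G})\ge 2\,\mathfrak{a}_{V_K\backslash\{v\}}({\rm G})/\mathfrak{a}_{V_K\backslash\{v\}}({\rm X})$.

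The only genuine subtlety — and the step I would treat most carefully — is the matching of the two constant prefactors: Theorem~\ref{th:g3}(i) is phrased with $\mathfrak{a}_S({\rm G})$ in the exponent while~\eqref{eq:lower3} naturally involves $\mathfrak{a}_S({\rm X})$ (or $\mathfrak{a}_S(Y)$). One must check that the lower bound can indeed be stated with $\mathfrak{a}_{V_K\backslash\{v\}}({\rm X})$, which follows since $\mathfrak{a}_S(Y)\le\mathfrak{a}_S({\rm X})$ for bounded $Y$ by definition of $\mathfrak{a}_S({\rm X})$ as a supremum, and since the inequality $\omega_S(Y,\epsilon)\ge\epsilon^{-d(Y)/\mathfrak{a}_S(Y)+\delta}$ with $d(Y)=r_v\dim({\rm X})$ (as $Y$ has positive measure) degrades monotonically when $\mathfrak{a}_S(Y)$ is replaced by the larger $\mathfrak{a}_S({\rm X})$; this is exactly the mechanism already used to pass from~\eqref{eq:lower2} to~\eqref{eq:lower3}. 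Everything else is a limiting argument, so no further obstacle arises.
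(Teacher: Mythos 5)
Your overall strategy — combine the upper bound of Theorem~\ref{th:g3}(i) with the lower bound~\eqref{eq:lower3}, taking $S=V_K\setminus\{v\}$, and then let $\epsilon\to 0$ and $\delta\to 0$ — is exactly what the paper does, and your final cancellation of $r\dim({\rm X})$ is correct. However, your setup contains a genuine inconsistency: you take $S=V_K\setminus\{v\}$ and \emph{simultaneously} $S'=\{v\}$, but Theorem~\ref{th:g3} requires $S'$ to be a finite subset of $S$, and $v\notin V_K\setminus\{v\}$. This is not merely a slip of notation: it leads you to place $Y$ inside ${\rm X}(K_v)$ and to claim $\overline{{\rm X}(O_{(V_K\backslash S)\cup S'})}=\overline{{\rm X}(O_{V_K})}$, neither of which is correct — with $S=V_K\setminus\{v\}$ one has $(V_K\backslash S)\cup S'=\{v\}\cup S'$, and $Y$ must live in ${\rm X}_{S'}$.

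The conceptual point being missed is that in the duality set-up the place $v$ is the \emph{dynamical} place: $V_K\setminus S=\{v\}$ is where the mean ergodic theorem operates, which is precisely why $\mathfrak{q}_{V_K\backslash S}({\rm G})=\mathfrak{q}_{\{v\}}({\rm G})=\mathfrak{q}_v({\rm G})$ appears; the \emph{approximation} takes place at places $w\in S'\subset S$, necessarily distinct from $v$. The repair is small: choose any finite $S'\subset V_K\setminus\{v\}$ large enough that ${\rm X}(O_{\{v\}\cup S'})\ne\emptyset$ (possible from the hypothesis ${\rm X}(O_{V_K\backslash\{v\}})\ne\emptyset$, since a given $K$-rational point has only finitely many denominator places), take $Y\subset\overline{{\rm X}(O_{\{v\}\cup S'})}\subset{\rm X}_{S'}$ of positive measure intersected with the full-measure set from Theorem~\ref{th:g3}(i), and run the same two-sided comparison. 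The product $\prod_{w\in S'}\epsilon_w^{-r_w\dim({\rm X})}$ appears on both sides and cancels (the $r$-factor now belongs to $w\in S'$ rather than to $v$, but this does not matter), so your concluding inequality $\mathfrak{q}_v({\rm G})\ge 2\,\mathfrak{a}_{V_K\backslash\{v\}}({\rm G})/\mathfrak{a}_{V_K\backslash\{v\}}({\rm X})$ stands unchanged once the roles of $S$ and $S'$ are assigned correctly.
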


For example, Corollary \ref{cor:q_v} implies that $\mathfrak{q}_v(\hbox{SL}_n)\ge 2(n-1)$, which
is known to be sharp (see Section \ref{sec:examples}).

\section{Examples}\label{sec:examples}

\subsection{Diophantine approximation on spheres}
Let ${\rm S}^d$ be the unit sphere of dimension $d$ centered at origin, $d\ge 2$, which we view as the level set of the standard quadratic form given by the sum of squares.
We fix a prime $p=1\mod 4$. Then ${\rm S}^d(\mathbb{Z}[1/p])$ is dense in ${\rm S}^d(\mathbb{R})$,
and here we derive a quantitative density estimates. We treat the cases $d=2$, $d=3$, and $d\ge 4$
separately.

For $d=2$, Theorem \ref{th:g3} implies that
\begin{itemize}
\item For almost every $x\in {\rm S}^2(\mathbb{R})$, $\delta>0$, and 
$\epsilon\in (0,\epsilon_0(x,\delta))$, there exists $z\in {\rm S}^2(\mathbb{Z}[1/p])$
such that
$$
\|x-z\|_\infty\le \epsilon\quad\hbox{and}\quad \height(z)\le \epsilon^{-2-\delta}.
$$
We note that $\dim ({\rm S}^2)=2$ and $\mathfrak{a}_{V_\mathbb{Q}\backslash \{p\}}({\rm S}^2)=1$, so that
this exponent is the best possible (see (\ref{eq:lower3})).

\item For every $x\in {\rm S}^2(\mathbb{R})$, $\delta>0$, and 
$\epsilon\in (0,\epsilon_0(\delta))$, there exists $z\in {\rm S}^2(\mathbb{Z}[1/p])$
such that
$$
\|x-z\|_\infty\le \epsilon\quad\hbox{and}\quad \height(z)\le \epsilon^{-4-\delta}.
$$
\end{itemize} 
To deduce these estimates, we consider the group 
${\rm G}\simeq {\rm D}^\times/{\rm Z}^\times$,
where ${\rm D}$ denotes Hamilton's quaternion algebra and ${\rm Z}$ the centre of ${\rm D}$.
This group naturally acts on the variety of pure quaternions of norm one, which can be identified
with the sphere ${\rm S}^2$. Hence, we are in position to apply Theorem \ref{th:g3}.
Since $p=1\mod 4$, the Quaternion algebra split over $p$ and ramifies at $\infty$.
In this case, we have $\mathfrak{q}_{V_\mathbb{Q}\backslash \{p\}}({\rm G})=2$, which is a consequence of the results of
Deligne combined with Jacquet--Langlands correspondence (see \cite[Appendix]{Lub}),
$\dim ({\rm S}^2)=2$, and $\mathfrak{a}_{V_\mathbb{Q}\backslash \{p\}}({\rm G})=1$.

For $d=3$, Theorem \ref{th:g3} implies that
\begin{itemize}
\item For almost every $x\in {\rm S}^3(\mathbb{R})$, $\delta>0$, and 
$\epsilon\in (0,\epsilon_0(x,\delta))$, there exists $z\in {\rm S}^3(\mathbb{Z}[1/p])$
such that
$$
\|x-z\|_\infty\le \epsilon\quad\hbox{and}\quad \height(z)\le \epsilon^{-\frac{3}{2}-\delta}.
$$
Since $\dim ({\rm S}^3)=3$ and $\mathfrak{a}_{V_\mathbb{Q}\backslash \{p\}}({\rm S}^3)=2$, 
this exponent is the best possible (see (\ref{eq:lower3})).
\item For every $x\in {\rm S}^3(\mathbb{R})$, $\delta>0$, and 
$\epsilon\in (0,\epsilon_0(\delta))$, there exists $z\in {\rm S}^3(\mathbb{Z}[1/p])$
such that
$$
\|x-z\|_\infty\le \epsilon\quad\hbox{and}\quad \height(z)\le \epsilon^{-3-\delta}.
$$
\end{itemize} 
To deduce these estimates, we now consider the group 
${\rm G}$ of norm one elements of  Hamilton's quaternion algebra ${\rm D}$,
which can be identified with the variety ${\rm S}^3$.
We have $\mathfrak{q}_p({\rm G})=2$, which is a again consequence of the results of
Deligne combined with Jacquet--Langlands correspondence (see \cite[Appendix]{Lub}),
$\dim ( {\rm G})=3$ , and $\mathfrak{a}_{V_\mathbb{Q}\backslash \{p\}}({\rm G})=2$.
Hence, the claimed estimates follow from Theorem \ref{th:g3}.

Now let $d\ge 4$. We consider the natural action of the group ${\rm G}={\rm SO}_{d+1}$
on ${\rm S}^d$. We note that since $p=1\mod 4$, the group ${\rm G}$ splits over $\mathbb{Q}_p$.
By \cite{li,oh}, we have $\mathfrak{q}_p({\rm G})\le d$ for even $d$ and 
$\mathfrak{q}_{V_\mathbb{Q}\backslash \{p\}}({\rm G})\le d+1$ for odd $d$.
By \cite{GN2}, the parameter $\mathfrak{a}_p({\rm G})$ can be estimated in terms of volumes
of the height balls, which in turn can be estimated in terms of the root system data of ${\rm G}$.
This gives the estimates $\mathfrak{a}_{V_\mathbb{Q}\backslash \{p\}}({\rm G})= d^2/4$ for even $d$ and
$\mathfrak{a}_p({\rm G})= (d+1)(d+3)/4$ for odd $d$. Therefore, 
Theorem \ref{th:g3} implies that for even $d\ge 4$, 
\begin{itemize}
\item For almost every $x\in {\rm S}^d(\mathbb{R})$, $\delta>0$, and 
$\epsilon\in (0,\epsilon_0(x,\delta))$, there exists $z\in {\rm S}^d(\mathbb{Z}[1/p])$
such that
$$
\|x-z\|_\infty\le \epsilon\quad\hbox{and}\quad \height(z)\le \epsilon^{-2-\delta}.
$$
The exponent $2$ should be compared with the lower estimate
$\frac{d}{d-1}$ given by (\ref{eq:lower3}). At present we don't know whether this exponent can be improved.
\item For every $x\in {\rm S}^d(\mathbb{R})$, $\delta>0$, and 
$\epsilon\in (0,\epsilon_0(\delta))$, there exists $z\in {\rm S}^d(\mathbb{Z}[1/p])$
such that
$$
\|x-z\|_\infty\le \epsilon\quad\hbox{and}\quad \height(z)\le \epsilon^{-4-\delta}.
$$
\end{itemize} 
Similarly, for odd $d\ge 4$, 
\begin{itemize}
\item For almost every $x\in {\rm S}^d(\mathbb{R})$, $\delta>0$, and 
$\epsilon\in (0,\epsilon_0(x,\delta))$, there exists $z\in {\rm S}^d(\mathbb{Z}[1/p])$
such that
$$
\|x-z\|_\infty\le \epsilon\quad\hbox{and}\quad \height(z)\le \epsilon^{-\frac{2d}{d+3}-\delta}.
$$
\item For every $x\in {\rm S}^d(\mathbb{R})$, $\delta>0$, and 
$\epsilon\in (0,\epsilon_0(\delta))$, there exists $z\in {\rm S}^d(\mathbb{Z}[1/p])$
such that
$$
\|x-z\|_\infty\le \epsilon\quad\hbox{and}\quad \height(z)\le \epsilon^{-\frac{4d}{d+3}-\delta}.
$$
\end{itemize} 
It is interesting to compare our results with the results on Diophantine approximation
on the spheres obtained in \cite{Sch} by elementary methods that use rational parametrisations
of spheres. It is shown in \cite{Sch} that 
for every $x\in {\rm S}^d(\mathbb{R})$ and $\epsilon\in (0,1)$,
there exists $z\in {\rm S}^d(\mathbb{Q})$ such that
$$
\|x-z\|_\infty\le \epsilon\quad\hbox{and}\quad \height(z)\le \hbox{const}\; \epsilon^{-2\lceil\log_2 (d+1)\rceil}.
$$
While this result deals with the set of all $\mathbb{Q}$-points on ${\rm S}^d$
rather than just $\mathbb{Z}[1/p]$-points, the exponent obtained is significantly
weaker than ours.

\subsection{Diophantine approximation in the orthogonal group}
Let ${\rm SO}_{d+1}$, $d\ge 4$,  be the orthogonal group and $p$ be a prime such that $p=1\mod 4$.
Then ${\rm SO}_{d+1}(\mathbb{Z}[1/p])$ is dense in ${\rm SO}_{d+1}(\mathbb{R})$. 
We have $\dim (\hbox{SO}_{d+1})=d(d+1)/2$, and $\mathfrak{a}_{V_\mathbb{Q}\backslash \{p\}}({\rm SO}_{d+1})$,
$\mathfrak{q}_p({\rm SO}_{d+1})$ are given above. Therefore, Theorem \ref{th:g3} gives
\begin{itemize}
\item For almost every $x\in {\rm SO}_{d+1}(\mathbb{R})$, $\delta>0$, and 
$\epsilon\in (0,\epsilon_0(x,\delta))$, there exists $z\in {\rm SO}_{d+1}(\mathbb{Z}[1/p])$
such that
\begin{align*}
\|x-z\|_\infty\le \epsilon\quad\hbox{and}\quad \height(z)\le \epsilon^{-(d+1)-\delta}, \quad\hbox{when
  $d$ is
  even,}\\
\|x-z\|_\infty\le \epsilon\quad\hbox{and}\quad \height(z)\le \epsilon^{-\frac{d(d+1)}{d+3}-\delta}, \quad\hbox{when $d$
  is odd.}
\end{align*}
This should be compared with lower estimates on the exponents given by (\ref{eq:lower3}),
which is $2(d+1)/d$ for even $d$, and $2d/(d+3)$ for odd $d$.
\item For every $x\in {\rm SO}_{d+1}(\mathbb{R})$, $\delta>0$, and 
$\epsilon\in (0,\epsilon_0(\delta))$, there exists $z\in {\rm SO}_{d+1}(\mathbb{Z}[1/p])$
such that
\begin{align*}
\|x-z\|_\infty\le \epsilon\quad\hbox{and}\quad \height(z)\le \epsilon^{-2(d+1)-\delta}, \quad\hbox{when $d$
  is even,}\\
\|x-z\|_\infty\le \epsilon\quad\hbox{and}\quad \height(z)\le \epsilon^{-\frac{2d(d+1)}{d+3}-\delta}, \quad\hbox{when $d$
  is odd.}
\end{align*}
\end{itemize}

\subsection{Diophantine approximation by the Hilbert modular group}
Let $K$ be a totally real number field, $O$ its ring of integers, and $S$
a proper subset of $V_K^\infty$. The Hilbert modular group $\hbox{SL}_2(O)$ is a dense subgroup
of $(\hbox{SL}_2)_S=\prod_{v\in S} \hbox{SL}_2(K_v)$. We have $\dim ({\rm SL}_2)=3$
and $\mathfrak{a}_{S\cup V_K^f}({\rm SL}_2)=2$, and if we assume the Ramanujan conjecture for $\hbox{SL}_2$ over $K$,
then $\mathfrak{q}_v({\rm SL}_2)=2$. Hence,
Theorem \ref{th:g1} implies the following quantitative density results
(under the Ramanujan conjecture):
\begin{itemize}
\item For almost every $x_v\in \hbox{SL}_2(K_v)$ with $v\in S$, $\delta>0$, and 
$\epsilon_v\in (0,\epsilon_0(x,\delta))$, there exists $z\in \hbox{SL}_2(O)$
such that
$$
\|x_v-z\|_v\le \epsilon_v\hbox{ for $v\in S$}\quad\hbox{and}\quad \height(z)\le \prod_{v\in S} \epsilon_v^{-\frac{3}{2}-\delta}.
$$
The exponent $\frac{3}{2}$ is the best possible by  (\ref{eq:lower3}).
\item For every $x_v\in \hbox{SL}_2(K_v)$ with $v\in S$ such that $\|x_v\|_v\le r$, $\delta>0$, and 
$\epsilon_v\in (0,\epsilon_0(r,\delta))$, there exists $z\in \hbox{SL}_2(O)$
such that
$$
\|x_v-z\|_v\le \epsilon_v\hbox{ for $v\in S$}\quad\hbox{and}\quad \height(z)\le \prod_{v\in S} \epsilon_v^{-3-\delta}.
$$
\end{itemize}
Using the best currently known estimates towards the Ramanujan--Petersson conjecture (see \cite{ks}),
we have $\mathfrak{q}_v({\rm SL}_2)\le \frac{18}{7}$, and 
Theorem \ref{th:g1}  gives unconditional solutions to the above inequalities
$\|x_v-z\|_v\le \epsilon_v$, $v\in S$, with 
$$
\height(z)\le \epsilon^{-\frac{27}{14}-\delta}\quad\hbox{and}\quad \height(z)\le \epsilon^{-\frac{27}{7}-\delta}
$$
respectively.

\subsection{Estimates on integrability exponents}
Let us apply Corollary \ref{cor:q_v} to the action of ${\rm G}=\hbox{SL}_n$ on ${\rm
  X}=\mathbb{A}^n\backslash \{0\}$. In this case, we have $\mathfrak{a}_{V_K\backslash \{v\}}({\rm
  G})=n^2-n$ and $\mathfrak{a}_{V_K\backslash \{v\}}({\rm  X})=n$. Hence, we conclude that
$$
\mathfrak{q}_v(\hbox{SL}_n)\ge 2(n-1).
$$
This estimate is sharp. 


Another example is given by the orthogonal group 
${\rm G}=\hbox{SO}_{d+1}$ acting on the sphere ${\rm X}={\rm S}^d$, discussed in Section 2.1.
Let $p$ be a prime such that $p=1\mod 4$. In this case we have 
$\mathfrak{a}_{V_\mathbb{Q}\backslash \{p\}}({\rm G})= d^2/4$ for even $d\ge 4$,
$\mathfrak{a}_{V_\Q\setminus\{p\}}({\rm G})= (d+1)(d+3)/4$ for odd $d\ge 4$, and
$\mathfrak{a}_{V_\mathbb{Q}\backslash \{p\}}({\rm X})= d-1$. Therefore,
\begin{align*}
&\mathfrak{q}_v(\hbox{SO}_{d+1})\ge \frac{d^2}{2d-2},\quad\hbox{when $d$ is even},\\
&\mathfrak{q}_v(\hbox{SO}_{d+1})\ge \frac{(d+1)(d+3)}{2d-2},\quad\hbox{when $d$ is odd}.
\end{align*}



%

\section{Semisimple groups and spherical functions}
\label{sec:semisimple_and_spherical}

Let $K$ be a number field. We denote by $V_K$ the set of all places of $K$, which consists
of the finite set $V_K^\infty$ of Archimedean places and  the set $V_K^f$ of non-Archimedean places.
For $v\in V_K$, we denote by $|\cdot|_v$ the suitably normalised absolute value and by $K_v$
the corresponding completion.
Also for $v\in V_K^f$ we denote by $O_v\subset K_v$ the ring of integers and by $q_v$ the cardinality
of the residue field.

\subsection{Structure of semisimple groups}
We recall elements of the structure theory of semisimple algebraic groups over local fields,
which is discussed in details in \cite{BT,Tits}.
Let ${\rm G}\subset \hbox{GL}_n$ be a connected semisimple  algebraic group defined over a number field $K$.
For all but finitely many places in $V_K$,
\begin{itemize}
\item[(i)] the group $U_v:={\rm G}({O}_v)$ is a hyperspecial, good maximal compact subgroup of $G_v:={\rm
    G}(K_v)$,
\item[(ii)] the group $\rm G$ is unramified over $K_v$ (that is,
$\rm G$ is quasi-split over $K_v$ and split over an unramified extension of $K_v$).
\end{itemize}
For the other places $v$, we fix a good, special maximal compact subgroup $U_v$ 
of $G_v$. For $S\subset V_K$, we set $U_S=\prod_{v\in S} U_v$.

To simplify notation, for an algebraic group ${\rm C}_v$ defined over $K_v$ we denote by $C_v$
the set ${\rm C}_v(K_v)$ of $K_v$-points in ${\rm C}_v$.

Every subgroup $U_v$ is associated to a minimal parabolic $K_v$-subgroup ${\rm P}_v$ of ${\rm G}_v$, 
that has a decomposition ${\rm P}_v={\rm N}_v{\rm Z}_v$ where ${\rm Z}_v$
is connected and reductive, and ${\rm N}_v$ is the unipotent radical of ${\rm P}_v$.
Let $Z_v^\circ$ be the maximal compact subgroup of $Z_v$.
Then $Z_v/Z_v^\circ$ is a free $\mathbb{Z}$-module of rank equal to
the $K_v$-rank of ${\rm G}$. We set $\mathcal{Z}_v=(Z_v/Z^\circ_v)\otimes \mathbb{R}$
and denote by $\nu_v:Z_v\to \mathcal{Z}_v$ the natural map.
Let ${\rm T}_v$ be the maximal $K_v$-split torus in ${\rm Z}_v$.
Then $T^\circ_v:=T_v\cap Z_v^\circ$ is the maximal compact subgroup of $T_v$
and $T_v/T_v^\circ$ has finite index in $Z_v/Z_v^\circ$.
The group of characters of ${\rm Z}_v$ is denoted by $X^*({\rm Z}_v)$.
For any $\theta\in X^*({\rm Z}_v)$ we associate a linear functional $\chi_\theta$ on $\mathcal{Z}_v$
defined by 
\begin{equation}\label{eq:zv}
|\theta(z)|_v=q_v^{\left<\chi_\theta,\nu_v(z)\right>},\quad z\in Z_v,
\end{equation}
where $q_v=e$ for $v\in V_K^\infty$, and $q_v$ is the order of the residue field for $v\in V_K^f$.

The adjoint action of ${\rm T}_v$ on ${\rm G}$
defines a root system in the dual space $\mathcal{Z}_v^*$.
We fix the ordering on  the root system defined by the parabolic subgroup ${\rm P}_v$.
Let $\mathcal{Z}_v^-$ denote the negative Weyl chamber in $\mathcal{Z}_v$ with respect to this ordering.
The relative Weyl group $\mathcal{W}_v:=N_{ G_v}({T}_v)/Z_{ G_v}({T}_v)$ 
operates on $Z_v$ and on $\mathcal{Z}_v$.  
We choose a $\mathcal{W}_v$-invariant scalar product on $\mathcal{Z}_v$.
Then we regard the root system as a subset of $\mathcal{Z}_v$,
 and denote by $\mathcal{Z}_{v,-}$ the cone consisting of negative linear combinations
of simple roots. Let $Z_v^-=\nu_v^{-1}(\mathcal{Z}_v^-)$ and $Z_{v,-}=\nu_v^{-1}(\mathcal{Z}_{v,-})$.

The group $G_v$ has the {\em Cartan decomposition}
$$
G_v=U_v Z_v^- U_v,
$$
which defines a bijection between the double cosets $U_v\backslash G_v/ U_v$
and $Z_v^-/Z^\circ_v$, and the {\em Iwasawa decomposition}
$$
G_v=N_vZ_v U_v,
$$
which defines a bijection between $N_v\backslash G_v/ U_v$ and $Z_v/Z^\circ_v$.
For $g\in G_v$, we define $z(g)\in Z_v/Z^\circ_v$ by the property $g\in N_v z(g) U_v$.

For unramified $v$, we have $T_v/T_v^\circ=Z_v/Z_v^\circ$ and the Cartan decomposition becomes
\begin{equation}\label{eq:cartan2}
G_v=U_v T_v^- U_v,
\end{equation}
where $T_v^-=\nu_v^{-1}(\mathcal{Z}_v^-)$.

For $S\subset V_K$, we denote by $G_S$ the restricted direct product of the groups
$G_v$ for  $v\in S$ with respect to the family of subgroups $U_v$.
Then $G_S$ is a locally compact second countable group, and $U_S:=\prod_{v\in S} U_v$.
is a maximal compact subgroup of $G_S$.

For every $v\in V_K$, we denote by $m_{G_v}$ the Haar measure on $G_v$ which is normalised so
that $m_{G_v}(U_v)=1$ for non-Archimedean $v$. Then the restricted product measure $\otimes_{v\in S} m_{G_v}$
defines a Haar measure on the group $G_S$ which we denote by $m_{S}$.
We also denote by $m_{U_v}$ the probability Haar measure on $U_v$.

\subsection{Spherical functions}
The theory of spherical functions on semisimple groups over local fields 
was developed by Harish-Chandra (\cite{HC1}, \cite[Chapter IV]{Hel}) in the Archimedean case, and
by Satake (\cite{Sat}, \cite{Car}) in the non-Archimedean case.

For $v\in V_K$, we introduce the {\em Hecke algebra} $\mathcal{H}_v$ 
consisting of bi-$U_v$-invariant continuous functions with compact support on $G_v$
and equipped with product given by convolution. The structure of this algebra can
be completely described, and in particular, it is commutative.

A continuous bi-$U_v$-invariant function $\eta:G_v\to\mathbb{C}$ with compact support
is called {\em spherical} if $\eta(e)=1$ and one of the following equivalent conditions holds:
\begin{enumerate}
\item[(a)]  $\eta$ is bi-$U_v$-invariant, and the map
$$
\mathcal{H}_v\to \mathbb{C}:\phi\mapsto \int_{G_v} \phi(g)\eta(g^{-1})\, dm_{G_v}(g) 
$$
is an algebra homomorphism.
\item[(b)] for every $\phi\in \mathcal{H}_v$ there exists $\lambda_\phi\in\mathbb{C}$ such that $\phi*\eta=\lambda_\phi\eta$.
\item[(c)] For every $g_1,g_2\in G_v$,
$$
\eta(g_1)\eta(g_2)= \int_{U_v} \eta(g_1ug_2)\, dm_{U_v}(u).
$$
\end{enumerate}
 
We shall use the following  parametrisation of the spherical functions as well as some of their basic properties, due to 
Harish Chandra \cite{HC1}\cite{HC2}\cite{HC3} and Satake \cite{Sat}. 


\begin{Theorem}\label{th:satake}
Every spherical function on $G_v$
is of the form
\begin{equation}\label{eq:spher}
\eta_\chi(g)=\int_{U_v} q_v^{\left<\chi, \nu_v(z(ug))\right>} \Delta_{v}^{-1/2}(z(ug))\, dm_{U_v}(u)
\end{equation}
where $\chi\in \mathcal{Z}^*_v\otimes \mathbb{C}$ and $\Delta_{v}$ denotes the modular function of
$P_v$. 

Furthermore, $\eta_{\chi}=\eta_{\chi'}$
if and only if $\chi$ and $\chi'$ are on the same orbit of the Weyl group $\mathcal{W}_v$.
\end{Theorem}

Let $\rho_v\in \mathcal{Z}_v^*$ denotes the character corresponding to the 
half-sum of positive roots. Then
$$
\Delta_v(z)=q_v^{2\left<\rho_v, \nu_v(z)\right>},\quad z\in Z_v.
$$
Let $\Pi_v=\{\alpha\}\subset \mathcal{Z}_v^*$
 denotes the system of simple roots corresponding to the parabolic subgroup ${\rm P}_v$.
We denote by $\{\alpha'\}_{\alpha\in\Pi_v}$ the basis dual to $\Pi_v$.
A character $\chi\in \mathcal{Z}_v^*\otimes \mathbb{C}$ is called {\it dominant} if
$\hbox{Re}\left<\chi,\alpha'\right>\ge 0$ for all $\alpha\in \Pi_v$. 
Note that  every $\chi$ can be conjugated to a dominant one by $\mathcal{W}_v$.
Therefore, in the discussion of spherical functions we may restrict our attention
to dominant $\chi$'s.

We recall the following well-known properties of spherical functions:

\begin{Lemma}\label{l:sph_basic}
For dominant $\chi\in \mathcal{Z}^*_v\otimes \mathbb{C}$,
\begin{itemize}
\item the spherical function $\eta_\chi$ is bounded if and only if
$$
\hbox{\rm  Re}\left<\chi,\alpha'\right>\le \left<\rho,\alpha'\right>
$$
for all $\alpha\in \Pi_v$.
\item the spherical function $\eta_\chi\in L^p(G_v)$ if and only if
$$
{\rm  Re} \left< \chi,\alpha' \right> < (1-1/p)\left<\rho,\alpha'\right>
$$
for all $\alpha\in \Pi_v$.
\end{itemize}
\end{Lemma}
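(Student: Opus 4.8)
The plan is to deduce both statements from the integral formula \eqref{eq:spher} in Theorem \ref{th:satake} together with the standard asymptotic expansion of $\eta_\chi$ along the negative Weyl chamber. First I would recall that by the Cartan decomposition $G_v = U_v Z_v^- U_v$ and bi-$U_v$-invariance, the function $\eta_\chi$ is determined by its values on $Z_v^-$, so membership in $L^p(G_v)$ is governed by the behaviour of $\eta_\chi(z)$ as $\nu_v(z)$ tends to infinity in the cone $\mathcal{Z}_v^-$, weighted by the volume growth of the corresponding double cosets. That volume growth is, up to bounded factors and (in the non-Archimedean case) polynomial factors, given by the modular function $\Delta_v(z) = q_v^{2\langle \rho_v, \nu_v(z)\rangle}$; this is the Macdonald/Harish-Chandra volume estimate for $U_v z U_v$, which I would cite from \cite{Car, Sat} in the non-Archimedean case and from \cite{Hel, HC1} in the Archimedean case.

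Next I would invoke the asymptotic behaviour of the spherical function itself. For dominant $\chi$, the leading term of $\eta_\chi(z)$ as $z \to \infty$ in $Z_v^-$ is comparable to $q_v^{\langle \chi - \rho_v,\, \nu_v(z)\rangle}$, again up to a bounded factor in the Archimedean case and a factor polynomial in $\|\nu_v(z)\|$ (coming from the possible coincidence of $\chi$ with a wall) in both cases. This is the content of the Harish-Chandra expansion \cite{HC2, HC3} (Archimedean) and its Satake-Macdonald analogue \cite{Sat, Car} (non-Archimedean); I would state it as a known fact. Combining the two inputs, $|\eta_\chi(z)|^p \Delta_v(z)$ behaves like $q_v^{\langle p(\mathrm{Re}\,\chi - \rho_v) + 2\rho_v,\, \nu_v(z)\rangle} = q_v^{\langle p\,\mathrm{Re}\,\chi + (2-p)\rho_v,\, \nu_v(z)\rangle}$ up to polynomial factors, and the integral over $Z_v^-$ converges if and only if this exponent decays in every direction of the chamber, i.e. iff $\langle p\,\mathrm{Re}\,\chi + (2-p)\rho_v,\, \nu_v(z)\rangle < 0$ for all nonzero $\nu_v(z) \in \mathcal{Z}_v^-$. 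Pairing against the dual basis $\{\alpha'\}_{\alpha\in\Pi_v}$ (whose negatives span the faces of $\mathcal{Z}_v^-$) turns this into $p\,\mathrm{Re}\langle\chi,\alpha'\rangle < (p-2)\langle\rho_v,\alpha'\rangle$ for all $\alpha$, which upon dividing by $p$ is exactly $\mathrm{Re}\langle\chi,\alpha'\rangle < (1-1/p)\langle\rho,\alpha'\rangle$. The boundedness criterion is the degenerate endpoint $p = \infty$ of the same computation: $\eta_\chi$ is bounded iff its leading exponent is $\le 0$ on the chamber, i.e. $\mathrm{Re}\langle\chi,\alpha'\rangle \le \langle\rho,\alpha'\rangle$ for all $\alpha$, where the non-strict inequality is permissible because at the wall the polynomial factor only affects $L^p$ for finite $p$, not boundedness.

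The main obstacle I anticipate is handling the two directions of the "if and only if" with the correct strictness of inequalities, and in particular making rigorous the claim that the polynomial (resp. bounded) correction factors to the leading exponential term of $\eta_\chi$ and to the double-coset volume do not change the convergence threshold except by turning the boundary case into a strict or non-strict condition as appropriate. In the Archimedean case one also needs the precise Harish-Chandra $\mathbf{c}$-function expansion to see that the subleading terms are genuinely lower order along every wall; in the $p$-adic case the Macdonald formula makes the correction a product of explicit rational functions of $q_v^{\langle\chi,\alpha\rangle}$, so the analysis is cleaner but one must check the boundary carefully. Since the statement is labelled "well-known", I would keep this part brief, citing \cite{HC1, HC2, HC3, Sat, Car, Hel} for the expansions and for the final integrability dichotomy rather than reproving it.
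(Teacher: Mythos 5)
The paper offers no proof of Lemma~\ref{l:sph_basic}; it is stated as a well-known fact from the Harish-Chandra and Satake--Macdonald theory of spherical functions, so there is no in-text argument to compare against. Your strategy is exactly the standard one---reduce to the Cartan cell via bi-$U_v$-invariance, combine the Macdonald/Harish-Chandra volume estimate $m_{G_v}(U_vzU_v)\asymp\Delta_v(z)$ (up to polynomial factors) with the leading $\mathbf{c}$-function term $q_v^{\langle\mathrm{Re}\,\chi-\rho_v,\,\nu_v(z)\rangle}$ of the spherical function, and translate the resulting cone condition into coordinates along the dual basis---and up to its last line it produces the right threshold. In that last line, however, there is an arithmetic slip: dividing $p\,\mathrm{Re}\langle\chi,\alpha'\rangle<(p-2)\langle\rho_v,\alpha'\rangle$ by $p$ gives $\mathrm{Re}\langle\chi,\alpha'\rangle<\bigl(1-\tfrac{2}{p}\bigr)\langle\rho_v,\alpha'\rangle$, not $\bigl(1-\tfrac{1}{p}\bigr)$.

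The irony is that $1-\tfrac{2}{p}$ is in fact the correct threshold, and it is the paper's statement of the lemma that carries the misprint, not your derivation. Two independent checks: (i) for $\chi=0$ the condition must recover the classical fact that the Harish-Chandra $\Xi$-function $\eta_0$ lies in $L^p(G_v)$ precisely for $p>2$, whereas the $(1-\tfrac{1}{p})$ version would say $p>1$; (ii) in the proof of Proposition~\ref{p:lp} the paper itself uses the lemma to deduce, from $\eta_{\chi_v}\in L^p(G_v)$, the inequality $\bigl|q_v^{\langle\chi_v,\nu_v(t)\rangle}\bigr|\le\Delta_v(t)^{1/2-1/p}=q_v^{(1-2/p)\langle\rho_v,\nu_v(t)\rangle}$ for $t\in T_v^-$, which is precisely the $(1-\tfrac{2}{p})$ form paired against $\nu_v(t)$. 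Carried through carefully, your method would therefore have exposed the misprint rather than reproduced it; the slip at the end made your answer agree with the stated lemma for the wrong reason. Beyond that, the issues you flag---the polynomial corrections at the walls and the strictness of the inequalities, which require the full $\mathbf{c}$-function (Macdonald) expansion rather than just the leading term---are exactly the technical content one would import from the cited literature, and deferring to it is appropriate here.
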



We shall also need the following estimates.

\begin{Lemma}\label{l:sph_est}
\begin{enumerate}
\item[(i)] For dominant $\chi\in \mathcal{Z}_v^*\otimes \mathbb{C}$,
$$
|\eta_\chi(z)|\le q_v^{\hbox{\rm \tiny Re} \left<\chi,\nu_v(g)\right>}
\eta_0(g),\quad g\in Z^-_v.
$$
\item[(ii)] For every $\epsilon>0$,
$$
\eta_0(g)\le c_\epsilon\, \Delta_{v}^{-1/2+\epsilon}(g),\quad g\in Z_v,
$$
with $c_\epsilon$ bounded uniformly in $v$.
\end{enumerate}
\end{Lemma}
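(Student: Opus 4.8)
The plan is to reduce everything to the integral representation of spherical functions in Theorem~\ref{th:satake}, together with an elementary estimate on the Iwasawa $Z$-component. For part (i), I would start from
\[
\eta_\chi(g)=\int_{U_v} q_v^{\left<\chi,\nu_v(z(ug))\right>}\Delta_v^{-1/2}(z(ug))\,dm_{U_v}(u),
\qquad
\eta_0(g)=\int_{U_v} \Delta_v^{-1/2}(z(ug))\,dm_{U_v}(u).
\]
Since $\chi$ is dominant and $g\in Z_v^-$, the key point is the classical fact (a consequence of convexity properties of the Iwasawa projection — e.g.\ the Kostant convexity theorem in the Archimedean case and its $p$-adic analogue) that for every $u\in U_v$ one has $\nu_v(z(ug))$ lying in the convex hull of the Weyl orbit of $\nu_v(g)$; for a dominant real functional $\mathrm{Re}\,\chi$ this yields $\mathrm{Re}\left<\chi,\nu_v(z(ug))\right>\le \mathrm{Re}\left<\chi,\nu_v(g)\right>$. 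Pulling the resulting bound $q_v^{\mathrm{Re}\left<\chi,\nu_v(z(ug))\right>}\le q_v^{\mathrm{Re}\left<\chi,\nu_v(g)\right>}$ out of the integral (and using $|q_v^{\left<\chi,\nu_v(z)\right>}|=q_v^{\mathrm{Re}\left<\chi,\nu_v(z)\right>}$) gives exactly $|\eta_\chi(g)|\le q_v^{\mathrm{Re}\left<\chi,\nu_v(g)\right>}\eta_0(g)$.

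For part (ii), I would again use the integral formula for $\eta_0$ and the fact that $\Delta_v(z)=q_v^{2\left<\rho_v,\nu_v(z)\right>}$. The naive bound $\Delta_v^{-1/2}(z(ug))\le \Delta_v^{-1/2+\epsilon}(g)$ is \emph{not} pointwise true (the Iwasawa component can make $\left<\rho_v,\nu_v(z(ug))\right>$ smaller than $\left<\rho_v,\nu_v(g)\right>$), so the $\epsilon$-loss must come from the known precise asymptotics of the basic spherical function $\eta_0$. Concretely, Harish-Chandra's and Macdonald's estimates give $\eta_0(g)\asymp \Delta_v^{-1/2}(g)\cdot(\text{polynomial in }\log_{q_v}\text{ of the }\mathcal Z_v\text{-coordinates of }g)$ on $Z_v^-$; absorbing the polynomial factor into $\Delta_v^{\epsilon}(g)$ (legitimate since $\Delta_v(g)\ge 1$ on $Z_v^-$ grows at least like a power when $g$ leaves compacta, while on compact subsets $\eta_0$ is bounded) yields $\eta_0(g)\le c_\epsilon\,\Delta_v^{-1/2+\epsilon}(g)$. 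Extending from $Z_v^-$ to all of $Z_v$ is done by the $\mathcal W_v$-invariance of $\eta_0$ and of the scalar product: one reduces a general $g\in Z_v$ to its dominant representative, noting $\eta_0(g)=\eta_0(g^{w})$ while $\Delta_v^{-1/2+\epsilon}$ is minimized on the dominant chamber, so the inequality only gets easier off the chamber.

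The main obstacle is the \emph{uniformity of $c_\epsilon$ in $v$}. For the finitely many ramified (and all Archimedean) places this is a finite check, but for the cofinitely many unramified non-Archimedean places one needs Macdonald's explicit formula
\[
\eta_0(t)=\frac{1}{|\mathcal W_v|}\,\Delta_v^{-1/2}(t)\sum_{w\in\mathcal W_v}\mathbf{c}_v(w\lambda)\,q_v^{\left<w\cdot 0,\ldots\right>}\cdot(\ldots),
\]
together with uniform bounds on the Gindikin--Karpelevich $\mathbf c$-function (its numerator and denominator are products of finitely many factors of the form $(1-q_v^{-1}x)/(1-x)$ over the roots, with the number of roots bounded by the absolute root system), and the standard counting estimate for the number of $w\in\mathcal W_v$ and the size of the relevant Weyl-orbit sum. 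The point is that the implied polynomial degree is the number of positive roots — bounded independently of $v$ — and the implied constant is controlled by $|\mathcal W_v|$ and the $\mathbf c$-function bounds, both uniform over unramified $v$. I would assemble these ingredients, citing \cite{Car,Sat} for the non-Archimedean formula and \cite{HC2,HC3,Hel} for the Archimedean case, and thereby obtain a single $c_\epsilon$ working for all $v\in V_K$ simultaneously.
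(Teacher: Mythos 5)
Your proof follows the same route as the paper for both parts, and the key ingredients are correctly identified. For (i) both arguments run through the integral representation of Theorem~\ref{th:satake} plus a pointwise bound on the Iwasawa $Z$-component of $ug$ — the paper quotes this from Bruhat--Tits Prop.~4.4.4 (non-Archimedean) and Knapp Prop.~7.15 (Archimedean), while you appeal to Kostant convexity and its $p$-adic analogue, which is the same circle of ideas. For (ii) the paper does not give a proof but cites Harish-Chandra, Silberger's Thm.~4.2.1, and \cite[\S 6.2]{GN2} for uniformity in $v$; your Macdonald-formula and $\mathbf{c}$-function sketch is a faithful unpacking of exactly those references, and you correctly pinpoint uniformity of $c_\epsilon$ over the unramified places as the one nontrivial issue.

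One thing to chase down carefully in (i): with $g\in Z_v^{-}$ the element $\nu_v(g)$ sits in the \emph{anti-dominant} chamber, so the maximum of the dominant functional $\mathrm{Re}\,\chi$ over $\mathrm{conv}(\mathcal W_v\cdot\nu_v(g))$ is attained at the dominant Weyl translate $w_0\nu_v(g)$, not at $\nu_v(g)$ itself. Taken literally, the two-sided convex-hull statement therefore yields $\mathrm{Re}\langle\chi,\nu_v(z(ug))\rangle\le\mathrm{Re}\langle\chi,w_0\nu_v(g)\rangle$, which is the opposite bound from the one you (and the paper's own proof text) write; the sign only comes out right after reconciling the paper's somewhat non-standard normalizations of $\nu_v$ (via $|\theta(z)|_v=q_v^{\langle\chi_\theta,\nu_v(z)\rangle}$, which flips sign at non-Archimedean places), the choice of negative chamber in the Cartan decomposition, and the definition of ``dominant $\chi$.'' This is a convention-dependent subtlety present in the source as well, but it is worth squaring the signs explicitly rather than asserting the inequality directly from the convex hull.
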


\begin{proof}
As to part (i), note that for the Archimedean case, this lemma is proved in \cite[Proposition~7.15]{knapp}.
The proof for non-Archimedean case is similar.
We have
$$
|\eta_\chi(g)|=\int_{U_v}  q_v^{ \hbox{\rm \tiny Re}  \left<\chi,\nu_v(z(ug))\right>}
\Delta_{v}^{-1/2}(z(ug))\, dm_{U_v}(u).
$$
Since the double cosets $U_vgU_v$ and $N_vz(ug)U_v=N_v(ug)U_v$ have nontrivial intersection,
it follows from \cite[Proposition~4.4.4]{BT} that for dominant $\chi$,
$$
\hbox{\rm  Re}  \left<\chi,\nu_v(z(ug))\right>\le  \hbox{\rm  Re}  \left<\chi,\nu_v(z)\right>.
$$
This implies the first claim.

As to part (ii), the bound stated of the Harish Chandra $\Xi$-function (denoted $\eta_0$ here) was established by Harish Chandra in both the Archimedean and non-Archimedean case (see also the discussion in \cite{H}).  The uniformity of the bound, namely the fact that $c_\epsilon$ 
is independent of $v$,  has been observed in \cite[\S 6.2]{GN2} and follows from the proof in \cite[Thm. 4.2.1]{Si}. 
\end{proof}

\subsection{Unitary representations}
The spherical functions arise naturally as matrix coefficients of unitary representations.
We denote by $\hat G_v$ the unitary dual of $G_v$
(i.e., the set of equivalence classes of irreducible unitary representations
of the group $G_v$) and by $\hat G_v^1$ the spherical unitary dual
(i.e., the subset consisting of spherical representations).
An irreducible unitary representation is called spherical if it contains a nonzero $U_v$-invariant vector.
Since the Hecke algebra $\mathcal{H}_v$ is commutative, the subspace of $U_v$-invariant vectors
in an irreducible unitary representation is at most one-dimensional. For $\tau_v\in \hat G_v^1$, we denote by $w_{\tau_v}$ a unit $U_v$-invariant vector.
Then the function
$$
\eta_{\tau_v}(g):=\left<\tau_v(g)w_{\tau_v},w_{\tau_v}\right>,\quad g\in G_v,
$$
is a spherical function on $G_v$. Moreover, different elements of $\hat G_v^1$
give rise to different spherical functions. Therefore, 
$\hat G_v^1$ can be identified with a subset of dominant $\chi\in \mathcal{Z}_v^*\otimes\mathbb{C}$
using Theorem \ref{th:satake}.


More generally, for $S\subset V_K$, we denote by $\hat G_S$ the unitary dual of $G_S$
and by $\hat G_S^1$ the spherical unitary dual (with respect to the subgroup $U_S$).
Every $\tau_S\in \hat G_S$ is a restricted tensor product of 
the form $\otimes'_{v\in S} \tau_v$ where $\tau_v\in \hat G_v$ and $\tau_v$
is spherical for almost all $v$ (see e.g. \cite{f}).

We define the  integrability exponent of a $U_S$-spherical  unitary representation
$\tau_S :G_S\to \mathcal{U}(\mathcal{H})$ as
$$
\mathfrak{q}(\tau_S,U_S)=\inf\left\{ q>0:\, \forall\,\, \hbox{$U$-inv. } w\in \mathcal{H}:\, \left< \tau_S(g)w,w\right>\in L^q(G)
\right\}.
$$

\begin{Prop}\label{p:lp}
Let $\tau_v$ for $v\in S$ be a family consisting of irreducible spherical unitary representations of $G_v$.
Then the integrability exponent of the restricted tensor product representation of $G_S$ satisfies 
$$
\mathfrak{q}\left(\otimes'_{v\in S} \tau_v, U_S\right)\le (1+\sigma_S)\sup_{v\in S} \mathfrak{q}(\tau_v, U_v),
$$
where $\sigma_S$ denotes the exponent of the subset $S$ defined in \eqref{eq:sigma_S}.
\end{Prop}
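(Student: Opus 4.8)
The plan is to reduce the bound on the $L^q$-integrability of matrix coefficients of $\tau_S = \otimes'_{v\in S}\tau_v$ to pointwise estimates on the constituent spherical functions $\eta_{\tau_v}$, using Lemma \ref{l:sph_est} together with the Cartan decomposition on each factor, and then to control the product over $v$ by a counting argument governed by the exponent $\sigma_S$. First I would fix $q > (1+\sigma_S)\sup_{v\in S}\mathfrak{q}(\tau_v,U_v)$; it suffices to show that every matrix coefficient $\varphi(g) = \langle \tau_S(g)w, w\rangle$ with $w$ a $U_S$-invariant unit vector lies in $L^q(G_S)$. Since the subspace of $U_v$-invariant vectors in each $\tau_v$ is at most one-dimensional, a $U_S$-invariant unit vector in the restricted tensor product is (a limit of) a pure tensor $\otimes_{v\in S} w_{\tau_v}$ over the $U_v$-invariant unit vectors, so $\varphi(g) = \prod_{v\in S}\eta_{\tau_v}(g_v)$ for $g = (g_v)_{v\in S}$, where $\eta_{\tau_v} = 1$ on $U_v$ and hence the product is a genuine (finite-support) product for each $g$ in the restricted direct product. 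Thus
\[
\|\varphi\|_{L^q(G_S)}^q = \int_{G_S}\prod_{v\in S}|\eta_{\tau_v}(g_v)|^q\, dm_S(g) = \prod_{v\in S}\int_{G_v}|\eta_{\tau_v}(g)|^q\, dm_{G_v}(g),
\]
the interchange being justified for almost all $v$ because $\eta_{\tau_v}\equiv 1$ on $U_v$ and $m_{G_v}(U_v)=1$, so all but finitely many factors equal $1$ and the restricted product measure acts factor-by-factor.

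Next I would estimate each factor $\int_{G_v}|\eta_{\tau_v}(g)|^q\,dm_{G_v}(g)$. Writing $q_v' = \mathfrak{q}(\tau_v,U_v)$ and picking any $q > q_v'$, by definition $\eta_{\tau_v}\in L^{q}(G_v)$, so the integral is finite; the point is to make the bound explicit and uniform enough to multiply. Using the Cartan decomposition $G_v = U_v Z_v^- U_v$ and bi-$U_v$-invariance, the integral reduces to a sum (resp. integral) over $Z_v^-/Z_v^\circ$ weighted by the volume of the double coset $U_v z U_v$, which is comparable to $\Delta_v(z)$. Combining Lemma \ref{l:sph_est}(i) and (ii), for dominant $\chi_{\tau_v}$ parametrising $\tau_v$ we get $|\eta_{\tau_v}(z)|^q \le q_v^{q\,\mathrm{Re}\langle\chi_{\tau_v},\nu_v(z)\rangle}\,\eta_0(z)^q \le c\, q_v^{q\,\mathrm{Re}\langle\chi_{\tau_v},\nu_v(z)\rangle}\Delta_v(z)^{-q/2+q\epsilon}$. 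Multiplying by the volume factor $\Delta_v(z)$ and summing the resulting geometric-type series over the lattice points in $\mathcal{Z}_v^-$, one sees that the sum converges precisely when $\mathrm{Re}\langle\chi_{\tau_v},\alpha'\rangle < (1-1/q)\langle\rho,\alpha'\rangle$ for all simple roots $\alpha$, i.e. when $q > q_v'$ (this is exactly Lemma \ref{l:sph_basic}), and moreover the sum of the geometric series is bounded by a constant depending only on the gap between $q$ and $q_v'$ and on a lower bound for $q_v$, uniformly in $v$ thanks to the uniformity of $c_\epsilon$ in Lemma \ref{l:sph_est}(ii).

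The main obstacle — and the step where the exponent $1+\sigma_S$ enters — is controlling the infinite product $\prod_{v\in S}\int_{G_v}|\eta_{\tau_v}(g)|^q\,dm_{G_v}(g)$: each factor exceeds $1$ (the contribution of $U_v$ alone is $1$), so one must show the excess $\int_{G_v\setminus U_v}|\eta_{\tau_v}|^q$ is summable over $v$. Here I would quantify the bound from the previous paragraph as $1 + O(q_v^{-c(q)})$ for a suitable positive $c(q)$ depending on how far $q$ exceeds $\sup_v q_v' $: the smallest nontrivial double coset has volume $\asymp q_v$, and the leading term of the geometric series over $\mathcal{Z}_v^-$ beyond the origin is of size $q_v^{1 - (1-1/q)\langle\rho,\alpha'\rangle\cdot(\text{something})}$, which, after choosing $q$ so that $(1+\sigma_S)\sup_v q_v' < q$, becomes $q_v^{-1-\eta}$ for some $\eta>0$ on the relevant range. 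Then $\sum_{v\in S}\log\!\big(1 + O(q_v^{-1-\eta})\big) \le \sum_{v\in S} O(q_v^{-1-\eta})$, and by the definition \eqref{eq:sigma_S} of $\sigma_S$ the number of $v\in S$ with $q_v\le N$ grows like $N^{\sigma_S+o(1)}$, so $\sum_{v\in S} q_v^{-1-\eta} < \infty$ as soon as $1+\eta > \sigma_S$, which is arranged by the choice of $q$. This gives $\|\varphi\|_{L^q(G_S)} < \infty$, hence $\mathfrak{q}(\otimes'_{v\in S}\tau_v, U_S)\le q$; letting $q\downarrow (1+\sigma_S)\sup_{v\in S}\mathfrak{q}(\tau_v,U_v)$ yields the claim. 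The delicate bookkeeping is matching the threshold for convergence of the per-place series against the growth rate $\sigma_S$ of the places, so that the single parameter $1+\sigma_S$ is exactly what is needed; I would isolate this as a lemma comparing $\int_{G_v}|\eta_\chi|^q$ to $1 + q_v^{-(1-1/q)\min_\alpha\langle\rho,\alpha'\rangle + 1 + o(1)}$ uniformly in $v$.
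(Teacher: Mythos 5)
Your proposal follows essentially the same route as the paper: reduce to the single spherical function $\eta=\prod_{v\in S}\eta_{\tau_v}$, estimate each local factor $\int_{G_v}|\eta_{\tau_v}|^q\,dm_{G_v}$ via the Cartan decomposition together with the uniform volume bound on $U_v$-double cosets and Lemma~\ref{l:sph_est}, obtain a per-place bound of the form $1+O(q_v^{-s})$, and then control the infinite product by the partial Euler-product/density criterion governed by $\sigma_S$. The only wrinkle is a bookkeeping slip at the end: with $q>(1+\sigma_S)\sup_v\mathfrak{q}(\tau_v,U_v)$ the per-place exponent $s$ is only guaranteed to exceed $\sigma_S$, not $1$, so ``$q_v^{-1-\eta}$ for some $\eta>0$'' should read ``$q_v^{-s}$ with $s>\sigma_S$''; the summability criterion you then invoke ($s>\sigma_S$) is the correct one and matches the paper's use of the convergence of $\prod_{v\in S}(1-q_v^{-s})^{-1}$ for $s>\sigma_S$.
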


\begin{proof}
\ignore{Let $S'$ be a cofinite subset of $S$ such that every $v\in S'$ satisfies (i) and (ii), and 
$\tau_v$ is spherical for every $v\in S'$.
Since
$$
\pi=\otimes_{v\in S} \pi_v=\left(\otimes_{v\in S-S'} \pi_v\right)\otimes \left(\otimes'_{v\in S'} \pi_v\right),
$$
it suffices to show that both $\otimes_{v\in S-S'} \pi_v$ and  $\otimes'_{v\in S'} \pi_v$ are $L^{q}$. 
Because $S\backslash S'$ is finite, the first claim is obvious.
Hence, without loss of generality, we may assume that $S'=S$.
}
We have to show that the spherical function $\eta=\prod_{v\in S} \eta_{\tau_v}$
is in $L^{q}(G_S)$ for $q>(1+\sigma_S)p$ where $p>p(\tau_v, U_v)$ for all $v\in S$.
To derive the required estimate we use integration calculated in terms of the Cartan decomposition (\ref{eq:cartan2}).
Recall the volume estimate (see \cite[\S 6.2]{GN2} for a discussion) :
\begin{equation}\label{eq:vol1}
m_{G_v}(U_vtU_v)\le c\,\Delta_v(t),\quad t\in T_v^-,
\end{equation}
which is uniform over $v$.
Let the representations $\tau_v$ correspond to dominant $\chi_v\in\mathcal{Z}_v\otimes\mathbb{C}$.
Since the spherical function $\eta_{\chi_v}$ is in $L^{p}(G_v)$, it follows 
from Lemma \ref{l:sph_basic} that for $t\in T_v^-$,
$$
\left|q_v^{\left<\chi_v,\nu_v(t)\right>}\right|\le \Delta_{v}(t)^{\frac{1}{2}-\frac{1}{p}}.
$$
Hence, Lemma \ref{l:sph_est} implies the estimate
\begin{equation}\label{eq:vol2}
|\eta_{\chi_v} (t)|\le c_\epsilon \Delta_v(t)^{-\frac{1}{p}+\epsilon}
\end{equation}
for every $\epsilon>0$.
Combining (\ref{eq:vol1}) and (\ref{eq:vol2}), we deduce that
\begin{align*}
\int_{G_v} |\eta_{\chi_v} (g)|^q\,dm_{G_v}(g) 
&=\sum_{z\in T_v^-/T^\circ_v} |\eta_{\chi_v}(t)|^q m_{G_v}(U_v t U_v)\\
&\le 1 +\sum_{z\in T_v^-/T_v^\circ-\{e\}} (cc^q_\varepsilon)\Delta_{v}(t)^{-q/p+q\varepsilon+1} \\
&\le 1 +\sum_{z\in T_v^-/ T^\circ_v-\{e\}}
(cc^q_\varepsilon)\left(\prod_{\chi\in\Pi_v} |\chi (t_v)|_v\right)^{-q/p+q\varepsilon+1} \\
&\le 1+\sum_{i_1,\ldots, i_r\in \mathbb{Z}_+, (i_1,\ldots, i_r)\ne
  0}(cc^q_\varepsilon)q_v^{ (-q/p+q\varepsilon+1)\sum_{j=1}^r i_j}\\
&=1+O_\varepsilon\left(q_v^{-q/p+q\varepsilon+1}\right)
\end{align*}
for every $\epsilon>0$. 
It follows from the definition of density $\sigma_S$ that the partial Euler product
$\prod_{v\in S} (1-q_v^{-s})^{-1}$ converges for $s>\sigma_S$. Therefore, we conclude that
$$
\prod_{v\in S} \int_{G_v} |\eta_{\tau_v}|^q\,dm_{G_v}<\infty
$$
provided that $-q/p+1<-\sigma_S$. This completes the proof.
\end{proof}

We denote by $\pi_S$ the unitary representation of $G_S$ on 
$L^2_{00}(G_{V_K}/{\rm G}(K))$. Proposition \ref{p:lp} implies the following estimate
on the integrability exponent of $\pi_S$.

\begin{Cor}\label{cor:unif}
For every $S\subset V_K$,
$$
\mathfrak{q}(\pi_S,U_S)\le \mathfrak{q}_S({\rm G}),
$$
where $\mathfrak{q}_S({\rm G})$ is defined in (\ref{eq:q_s}).
\end{Cor}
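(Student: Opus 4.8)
The plan is to decompose the $U_S$-bi-invariant matrix coefficients of $\pi_S$ spectrally into products of spherical functions, and then to integrate the uniform estimate already obtained in the proof of Proposition~\ref{p:lp}. Fix a $U_S$-invariant vector $w\in L^2_{00}(G_{V_K}/{\rm G}(K))$. Since the Hecke algebra $\mathcal{H}_S=\otimes'_{v\in S}\mathcal{H}_v$ is commutative, $(G_S,U_S)$ is a Gelfand pair, so the positive-definite bi-$U_S$-invariant function $g\mapsto\langle\pi_S(g)w,w\rangle$ admits an integral representation
$$
\langle\pi_S(g)w,w\rangle=\int_{\hat G_S^1}\eta_{\tau_S}(g)\,d\nu_w(\tau_S),\qquad g\in G_S,
$$
where $\nu_w$ is a positive measure of total mass $\|w\|^2$ supported on those spherical representations weakly contained in $\pi_S$. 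By Theorem~\ref{th:satake}, each such $\tau_S$ is a restricted tensor product $\otimes'_{v\in S}\tau_v$ with $\tau_v\in\hat G_v^1$, and $\eta_{\tau_S}=\prod_{v\in S}\eta_{\tau_v}$.

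The crucial step is to show that, for every $v\in S$ and $\nu_w$-almost every $\tau_S=\otimes'_{v'}\tau_{v'}$ in the support, the factor $\tau_v$ satisfies $\mathfrak{q}(\tau_v,U_v)\le\mathfrak{q}_v({\rm G})$. Restricting $\pi_S$ to the factor $G_v\hookrightarrow G_S$ recovers exactly the representation $\pi_v$ of $G_v$ on $L^2_{00}(G_{V_K}/{\rm G}(K))$, and $\tau_S|_{G_v}$ is a multiple of $\tau_v$; hence $\tau_v$ is weakly contained in $\pi_v$. Now recall, from Theorem~\ref{th:satake} and Lemma~\ref{l:sph_basic}, that the condition $\eta_\chi\in\bigcap_{q'>q}L^{q'}(G_v)$ (equivalently, integrability exponent $\le q$ for the spherical representation with Satake parameter $\chi$) is the \emph{closed} system of inequalities $\mathrm{Re}\langle\chi,\alpha'\rangle\le(1-1/q)\langle\rho,\alpha'\rangle$, $\alpha\in\Pi_v$; thus the set of Satake parameters of integrability exponent $\le q$ is closed in $\hat G_v^1$ and its complement is open. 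If some spherical $\tau_v$ in the support of $\pi_v$ had $\mathfrak{q}(\tau_v,U_v)>q>\mathfrak{q}_v({\rm G})$, then---choosing a $U_v$-invariant vector $u$ whose spherical spectral measure is carried by a small neighbourhood of the corresponding parameter inside this open set, and estimating $\langle\pi_v(\cdot)u,u\rangle$ along the chamber $Z_v^-$ by means of Lemma~\ref{l:sph_est}(i) together with the ``only if'' part of Lemma~\ref{l:sph_basic}---one finds $\langle\pi_v(\cdot)u,u\rangle\notin L^q(G_v)$, contradicting the definition~\eqref{eq:q_v} of $\mathfrak{q}_v({\rm G})$. (In other words, weak containment of a spherical representation in $\pi_v$ cannot increase its integrability exponent; this is a Cowling--Haagerup--Howe type phenomenon.) Letting $q\downarrow\mathfrak{q}_v({\rm G})$ gives the claim.

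Granting this, the corollary follows by integrating the estimate of Proposition~\ref{p:lp} over $\nu_w$. Fix $q>\mathfrak{q}_S({\rm G})=(1+\sigma_S)\sup_{v\in S}\mathfrak{q}_v({\rm G})$; since $\sup_{v}\mathfrak{q}_v({\rm G})<\infty$ we may pick $p$ with $\sup_{v\in S}\mathfrak{q}_v({\rm G})<p<q/(1+\sigma_S)$. For $\nu_w$-almost every $\tau_S=\otimes'_v\tau_v$ in the support we then have $\mathfrak{q}(\tau_v,U_v)<p$ for all $v\in S$, so the computation in the proof of Proposition~\ref{p:lp}---yielding $\int_{G_v}|\eta_{\tau_v}|^q\,dm_{G_v}\le 1+O_\varepsilon\!\left(q_v^{1-q/p+q\varepsilon}\right)$ with implied constant uniform in $v$---together with the convergence of $\prod_{v\in S}(1-q_v^{-s})^{-1}$ for $s>\sigma_S$ shows that $\|\eta_{\tau_S}\|_{L^q(G_S)}=\prod_{v\in S}\|\eta_{\tau_v}\|_{L^q(G_v)}\le C$ for a constant $C$ independent of $\tau_S$ in the support of $\nu_w$. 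By Minkowski's integral inequality,
$$
\|\langle\pi_S(\cdot)w,w\rangle\|_{L^q(G_S)}\le\int_{\hat G_S^1}\|\eta_{\tau_S}\|_{L^q(G_S)}\,d\nu_w(\tau_S)\le C\,\|w\|^2<\infty .
$$
Since this holds for every $U_S$-invariant $w$ and every $q>\mathfrak{q}_S({\rm G})$, we obtain $\mathfrak{q}(\pi_S,U_S)\le\mathfrak{q}_S({\rm G})$. The main obstacle is the middle step: converting the integrability of $U_v$-invariant matrix coefficients of $\pi_v$---all that the definition of $\mathfrak{q}_v({\rm G})$ directly provides---into a bound on the integrability exponent of each individual spherical constituent of $\pi_v$. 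Once that is in hand, the remainder is precisely the uniform Euler-product computation already carried out in Proposition~\ref{p:lp}.
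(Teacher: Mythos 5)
Your proposal reconstructs, in considerably more detail, the argument the paper leaves implicit: the paper's entire ``proof'' of Corollary~\ref{cor:unif} is the sentence ``Proposition~\ref{p:lp} implies the following estimate.''  The spectral decomposition of a $U_S$-bi-invariant matrix coefficient into spherical functions, the factorization $\tau_S=\otimes'_{v}\tau_v$ via Flath's theorem, the uniform $L^q$-bound $\|\eta_{\tau_S}\|_{L^q(G_S)}\le C$ extracted from the Euler-product computation inside the proof of Proposition~\ref{p:lp}, and the concluding Minkowski integral inequality are exactly the intended steps, and you execute them faithfully.

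The one place where your write-up is not yet a proof is the step you yourself flag as ``the main obstacle'': passing from the definition~\eqref{eq:q_v} of $\mathfrak{q}_v({\rm G})$ --- an $L^q$-integrability statement about $U_v$-invariant matrix coefficients of the \emph{reducible} representation $\pi_v$ --- to the bound $\mathfrak{q}(\tau_v,U_v)\le\mathfrak{q}_v({\rm G})$ for every irreducible spherical $\tau_v\prec\pi_v$.  Your wave-packet contradiction invokes Lemma~\ref{l:sph_est}(i), which furnishes only an \emph{upper} bound on $|\eta_\chi|$, and the ``only if'' direction of Lemma~\ref{l:sph_basic}, which is a statement about a single $\eta_\chi$ rather than about the spectral integral $\int\eta_\chi\,d\mu_u(\chi)$.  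That integral can exhibit cancellation among the $\eta_\chi$'s (the imaginary parts of the Satake parameters in the neighbourhood are unbounded in general), so it may decay strictly faster than any individual $\eta_\chi$, and the contradiction you want does not follow from the estimates as cited.  Closing this genuinely requires additional input --- either the Harish-Chandra/Macdonald asymptotic expansion of $\eta_\chi$ with control of the leading $c$-function coefficient, so that one can localize in the chamber and rule out cancellation, or a Cowling--Haagerup--Howe-type tensor-power reduction to the tempered case.  The paper does not supply this either; it silently treats $\mathfrak{q}_v({\rm G})$ as though it were \emph{defined} as the supremum of the integrability exponents of the irreducible spherical constituents of $\pi_v$, which is precisely the form in which Proposition~\ref{p:lp} becomes applicable.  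So your proposal is aligned with the paper's route, and the gap you identify --- and have not yet filled --- is one the paper shares.
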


Moreover, we also observe that the proof of Proposition \ref{p:lp} gives the following uniform estimate, 
where $\tau\prec\pi_S$ denotes weak containment (see e.g. \cite{chh} for a discussion). 

\begin{Cor}
For every $S\subset V_K$ and $q>\mathfrak{q}_S({\rm G})$,
$$
\sup\left\{\|\eta_\tau\|_q :\, \tau\in \hat{G}_S^1,\, \tau\prec\pi_S\right\}<\infty\,.
$$
\end{Cor}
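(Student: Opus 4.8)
The plan is to revisit the local computation performed in the proof of Proposition~\ref{p:lp} and to observe that every constant appearing there is independent of the particular spherical representation, so that the resulting $L^q$ estimate is automatically uniform over the family $\{\tau\in\hat{G}_S^1:\tau\prec\pi_S\}$. The point is that the bound obtained in that proof for $\int_{G_v}|\eta_{\tau_v}|^q\,dm_{G_v}$ depends only on $q$, on the auxiliary exponent $p$, on $\varepsilon$, and on the residue cardinality $q_v$ — but not on the spherical parameter $\chi_v$, so long as $\eta_{\chi_v}\in L^p(G_v)$.

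First I would fix $q>\mathfrak{q}_S({\rm G})=(1+\sigma_S)\sup_{v\in S}\mathfrak{q}_v({\rm G})$; since $\sup_{v\in S}\mathfrak{q}_v({\rm G})<\infty$ by \cite{GMO}, I can choose $p$ with $\sup_{v\in S}\mathfrak{q}_v({\rm G})<p$ and $q>(1+\sigma_S)p$, and then $\varepsilon>0$ small enough that $s:=q/p-q\varepsilon-1>\sigma_S$. Given $\tau\in\hat{G}_S^1$ with $\tau\prec\pi_S$, I would write $\tau=\otimes'_{v\in S}\tau_v$; since $\tau$ is $U_S$-spherical each $\tau_v$ lies in $\hat{G}_v^1$ with dominant parameter $\chi_v\in\mathcal{Z}_v^*\otimes\mathbb{C}$, and $\tau\prec\pi_S$ forces $\tau_v\prec\pi_v$ for each $v$, whence $\mathfrak{q}(\tau_v,U_v)\le\mathfrak{q}_v({\rm G})<p$ and $\eta_{\tau_v}\in L^p(G_v)$.

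Next I would split $S=S_0\sqcup S_1$, with $S_0$ the finite set of Archimedean and ramified places (where (i)--(ii) of Section~\ref{sec:semisimple_and_spherical} may fail) and $S_1$ its complement. For $v\in S_1$, running verbatim the chain of inequalities from the proof of Proposition~\ref{p:lp} — Lemma~\ref{l:sph_basic}, Lemma~\ref{l:sph_est}(i)--(ii), and the uniform volume estimate \eqref{eq:vol1} — gives $\int_{G_v}|\eta_{\tau_v}(g)|^q\,dm_{G_v}(g)\le 1+C_{p,q,\varepsilon}\,q_v^{-q/p+q\varepsilon+1}$, with $C_{p,q,\varepsilon}$ assembled only from the $v$-uniform constants $c,c_\varepsilon$ and hence independent of $\tau_v$. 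For $v\in S_0$, the set of spherical parameters with $\mathfrak{q}(\tau_v,U_v)\le p$ lies in a fixed compact subset of $\mathcal{Z}_v^*\otimes\mathbb{C}$ cut out by Lemma~\ref{l:sph_basic}, on which $\chi_v\mapsto\|\eta_{\chi_v}\|_{L^q(G_v)}$ is continuous, hence bounded uniformly in $\tau_v$. Multiplying over all $v$, using $\eta_\tau=\prod_{v\in S}\eta_{\tau_v}$ and, exactly as in Proposition~\ref{p:lp}, the convergence of $\prod_{v\in S}(1-q_v^{-s})^{-1}$ for $s>\sigma_S$ (a consequence of the definition \eqref{eq:sigma_S} of $\sigma_S$), I would conclude that $\|\eta_\tau\|_q$ is dominated by a finite constant depending only on $p,q,\varepsilon$ and $S$, hence uniform in $\tau$.

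The step I expect to be the main obstacle is the assertion that weak containment $\tau\prec\pi_S$ of the restricted tensor product descends to weak containment $\tau_v\prec\pi_v$ of each local factor — equivalently, that the parameter $\chi_v$ of $\tau_v$ lies in the region governed by $\mathfrak{q}_v({\rm G})$. This is where one must exploit the structure of $\pi_S=L^2_{00}(G_{V_K}/{\rm G}(K))$ as a $G_v$-representation, namely that it is a multiple of $\pi_v$, together with standard facts about weak containment and restricted tensor products. A secondary, routine but necessary point is the uniform treatment of the finitely many Archimedean and ramified places via compactness of the admissible parameter region, since the Euler-product argument only directly covers the cofinite set of good places.
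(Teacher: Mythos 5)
Your proposal follows the route the paper intends: revisit the proof of Proposition~\ref{p:lp} and observe that the local estimate $\int_{G_v}|\eta_{\tau_v}|^q\,dm_{G_v}\le 1+C q_v^{-s}$ (with $s>\sigma_S$) has $C$ independent of $v$ and of the spherical parameter, so the Euler product yields a bound on $\|\eta_\tau\|_q$ depending only on $q$ and $S$. The paper's proof is literally the one-line observation that the argument is already uniform in $\tau$; you have supplied the details, and they are essentially right, but with one slip worth flagging.

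The slip is in the treatment of the finitely many Archimedean and ramified places. For Archimedean $v$, the set of dominant $\chi\in\mathcal{Z}_v^*\otimes\mathbb{C}$ with $\eta_\chi\in L^p(G_v)$ is \emph{not} compact: Lemma~\ref{l:sph_basic} only constrains ${\rm Re}\langle\chi,\alpha'\rangle$, while the imaginary part of $\chi$ ranges over an unbounded set (the spherical principal series, for instance). So the ``continuous on a compact set, hence bounded'' argument fails as written. The correct fix is already in the paper's toolkit: Lemma~\ref{l:sph_est}(i) gives $|\eta_\chi|\le\eta_{{\rm Re}\chi}$, so $\|\eta_\chi\|_{L^q(G_v)}$ depends only on ${\rm Re}\chi$, which \emph{does} lie in a compact box; equivalently, the bound $|\eta_\chi(t)|\le c_\epsilon\Delta_v(t)^{-1/p+\epsilon}$ used at good places holds at the bad ones too (with a place-dependent but finite constant), and that is all you need since there are only finitely many. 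On the step you flag as the main obstacle: the descent $\tau\prec\pi_S\Rightarrow\tau_v\prec\pi_v$ is elementary (restrict to $G_v$; $\tau|_{G_v}$ is a multiple of $\tau_v$). The substantive point buried there is rather that $\tau_v\prec\pi_v$ implies $\eta_{\tau_v}\in L^p(G_v)$ for every $p>\mathfrak{q}_v({\rm G})$, i.e.\ upper semicontinuity of the integrability exponent under weak containment; this follows from Lemma~\ref{l:sph_basic} together with the identification of weak containment with closure of the spherical parameter, in the spirit of \cite{chh}, and the paper takes it as standard.
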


We now use the  foregoing spectral considerations to obtain an operator norm estimate. First observe 


\begin{Prop}\label{p:norm0}
Let $\beta$ be a bi-$U_S$-invariant finite Borel measure on $G_S$
and $\pi:G_S\to \mathcal{U}(\mathcal{H})$ a strongly continuous unitary representation.
Then
$$
\|\pi(\beta)\|\le \sup\left\{\sqrt{\beta^*(\eta_\tau)\beta(\eta_\tau)}:\, \tau\in \hat{G}_S^1,\, \tau\prec\pi\right\}.
$$
\end{Prop}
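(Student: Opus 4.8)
The plan is to pass to the closed subspace $\mathcal H_0$ of $U_S$-fixed vectors of $\pi$, where $\pi(\beta)$ lies inside a commutative unital $C^*$-algebra whose Gelfand spectrum is described by spherical functions, and then read the operator norm off the spectrum. Let $P=\pi(m_{U_S})$ be the orthogonal projection onto $\mathcal H_0$, where $m_{U_S}$ is the Haar probability measure on the compact group $U_S$. Since $\beta$ is bi-$U_S$-invariant one has $\pi(\beta)=P\pi(\beta)P$, so $\|\pi(\beta)\|$ equals the norm of $\pi(\beta)$ as an operator on $\mathcal H_0$; by the $C^*$-identity and $\pi(\beta)^*=\pi(\beta^*)$ this gives $\|\pi(\beta)\|^2=\|\pi(\mu)\|_{\mathcal B(\mathcal H_0)}$ with $\mu:=\beta^**\beta$, which is again a bi-$U_S$-invariant finite measure and satisfies $\pi(\mu)\ge 0$. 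Property (c) in the definition of spherical functions (which passes from $\mathcal H_v$ to finite measures, and from a single place to $G_S$, by the same $U_S$-averaging identity) yields the homomorphism property $\mu(\eta_\tau)=(\beta^**\beta)(\eta_\tau)=\beta^*(\eta_\tau)\,\beta(\eta_\tau)$ for every $\tau\in\hat G_S^1$, where moreover $\beta^*(\eta_\tau)=\overline{\beta(\eta_\tau)}$ because $\eta_\tau(g^{-1})=\overline{\eta_\tau(g)}$. So the proposition reduces to the inequality $\|\pi(\mu)\|_{\mathcal B(\mathcal H_0)}\le\sup\{\mu(\eta_\tau):\tau\in\hat G_S^1,\ \tau\prec\pi\}$.

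Now consider the norm-closed subalgebra $\mathcal C\subseteq\mathcal B(\mathcal H_0)$ generated by $\{\pi(\nu)|_{\mathcal H_0}:\nu\text{ a bi-}U_S\text{-invariant finite measure on }G_S\}$. It is a unital $C^*$-algebra, the unit being $\pi(m_{U_S})|_{\mathcal H_0}=\mathrm{id}$, and it is commutative: a bi-$U_S$-invariant measure factors through the restricted product $\prod'_{v\in S}(U_v\backslash G_v/U_v)$ and convolution of such measures is coordinatewise, so commutativity follows from that of each Hecke algebra $\mathcal H_v$. As $\pi(\mu)\in\mathcal C$ is self-adjoint, Gelfand theory gives $\|\pi(\mu)\|_{\mathcal B(\mathcal H_0)}=\sup_{\omega\in\widehat{\mathcal C}}|\omega(\pi(\mu))|$. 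For a character $\omega$ of $\mathcal C$ put $\eta^\omega(g):=\omega\big(\pi(m_{U_S}*\delta_g*m_{U_S})\big)$. Writing an arbitrary bi-$U_S$-invariant finite $\nu$ as $\int_{G_S}(m_{U_S}*\delta_g*m_{U_S})\,d\nu(g)$ and integrating gives $\omega(\pi(\nu))=\nu(\eta^\omega)$; multiplicativity of $\omega$ combined with $(m_{U_S}*\delta_{g_1}*m_{U_S})*(m_{U_S}*\delta_{g_2}*m_{U_S})=\int_{U_S}(m_{U_S}*\delta_{g_1ug_2}*m_{U_S})\,dm_{U_S}(u)$ shows $\eta^\omega$ satisfies property (c). Finally, $\omega$ extends to a state of $\mathcal B(\mathcal H)$, and since $\omega(P)=1$ this state is a weak-$*$ limit of finite convex combinations $\sum_i t_i\langle\,\cdot\,w_i,w_i\rangle$ of vector states at unit vectors $w_i\in\mathcal H_0$; evaluating on $\pi(m_{U_S}*\delta_g*m_{U_S})$ yields $\eta^\omega(g)=\lim\sum_i t_i\langle\pi(g)w_i,w_i\rangle$, uniformly on compacta. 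Hence $\eta^\omega$ is a continuous normalized function of positive type, so by Theorem~\ref{th:satake} it is the spherical function $\eta_{\tau^\omega}$ of a (unique) $\tau^\omega\in\hat G_S^1$; and the same limit formula exhibits $\eta_{\tau^\omega}$ as a limit, uniform on compacta, of convex combinations of diagonal matrix coefficients of $\pi$, which for the irreducible $\tau^\omega$ means exactly $\tau^\omega\prec\pi$ (see \cite{chh}).

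Putting this together, $\|\pi(\beta)\|^2=\|\pi(\mu)\|_{\mathcal B(\mathcal H_0)}=\sup_{\omega\in\widehat{\mathcal C}}\mu(\eta_{\tau^\omega})\le\sup\{\mu(\eta_\tau):\tau\in\hat G_S^1,\ \tau\prec\pi\}=\sup\{\beta^*(\eta_\tau)\,\beta(\eta_\tau):\tau\in\hat G_S^1,\ \tau\prec\pi\}$, and taking square roots gives the claimed bound. The step I expect to require the most care is the weak-containment assertion $\tau^\omega\prec\pi$: one must verify that approximating the single spherical matrix coefficient $\eta_{\tau^\omega}$ by diagonal matrix coefficients of $\pi$ is enough to conclude weak containment of the irreducible representation $\tau^\omega$, and that the approximating vectors can be chosen inside $\mathcal H_0$ (which is where $\omega(P)=1$ is used). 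The rest is the standard correspondence, for the Gelfand pair $(G_S,U_S)$, between characters of the commutative algebra of bi-$U_S$-invariant operators and spherical functions.
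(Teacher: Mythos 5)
Your proof is correct, but it takes a genuinely different route from the paper's. The paper's argument is shorter and more representation-theoretic: it implicitly invokes the standard fact that for any finite measure $\beta$, $\|\pi(\beta)\|\le\sup\{\|\tau(\beta)\|:\tau\text{ irreducible},\ \tau\prec\pi\}$, then computes $\|\tau(\beta)\|$ directly for symmetric $\beta$ by observing that $\tau(\beta)$ has range inside the (at most one-dimensional) space $\mathcal H_\tau^{U_S}$ and, being self-adjoint, kills its orthogonal complement, so $\|\tau(\beta)\|=|\beta(\eta_\tau)|$; the general case then follows from the $C^*$-identity exactly as in your step. Your argument replaces the appeal to irreducible decomposition and weak containment at the outset with Gelfand duality for the commutative $C^*$-algebra $\mathcal C\subseteq\mathcal B(\mathcal H_0)$ of bi-$U_S$-invariant operators, and only reconstructs the weak containment $\tau^\omega\prec\pi$ at the end from the state-approximation argument. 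This buys you a more self-contained proof that does not cite the abstract norm-sup-over-$\tau\prec\pi$ estimate, and it makes the Gelfand-pair structure (characters of $\mathcal C$ $\leftrightarrow$ spherical functions) explicit; the price is that you must handle several points the paper sidesteps: (a) commutativity of $\mathcal C$ for arbitrary bi-$U_S$-invariant \emph{measures}, not just compactly supported Hecke functions, needs an approximation argument; (b) continuity of $\eta^\omega(g)=\omega(P\pi(g)P)$ is not automatic, since $g\mapsto P\pi(g)P$ is only strongly, not norm, continuous---you obtain it indirectly via the uniform-on-compacta convergence of the approximating convex combinations of matrix coefficients, which itself deserves a word (pointwise convergence of normalized positive definite functions to a continuous limit is automatically uniform on compacta); and (c) the passage from $\omega(P)=1$ to approximating vector states taken with $w_i\in\mathcal H_0$ uses the Cauchy--Schwarz argument $\omega(x)=\omega(PxP)$, which you flag correctly as the delicate spot. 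None of these is a genuine gap---each is standard---but the total bookkeeping is heavier than the paper's two-paragraph argument.
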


\begin{proof}
%
%
To bound $\|\tau(\beta)\|$ for irreducible representations weakly contained in $\pi$, 
we first consider the case when the measure $\beta$ is symmetric.
Let $\mathcal{H}_\tau^{U_S}$ denote the subspace of $\tau(U_S)$-invariant vectors.
It is clear that 
$$
\tau(\beta)\mathcal{H}_\tau\subset\mathcal{H}_\tau^{U_S}.
$$
In particular, $\tau(\beta)=0$ for $\tau\notin \hat{G}_S^1$.
Since $\beta$ is symmetric, the operator $\tau(\beta)$ is self-adjoint, and hence,
$$
\tau(\beta)(\mathcal{H}_\tau^{U_S})^\perp \subset (\mathcal{H}_\tau^{U_S})^\perp.
$$
This implies that
$$
\|\tau(\beta)\|=\int_{G_S} \left<\tau(\beta)w_\tau,w_\tau\right>\, d\beta(g)=\beta(\eta_\tau)
$$
where $w_\tau\in \mathcal{H}_\tau^{U_S}$ with $\|w_\tau\|=1$.
This completes the proof when $\beta$ is symmetric. In general, we have
$$
\|\pi(\beta)\|^2=\|\pi(\beta^* *\beta)\|
$$
Hence, by the previous argument,
$$
\|\pi(\beta)\|\le \sup\left\{\sqrt{(\beta^* *\beta)(\eta_\tau)}:\, \tau\in \hat{G}_S^{1}, \tau\prec\pi\right\}.
$$
Since $\beta$ is bi-$U_S$-invariant, it follows from the properties of spherical functions that
$$
(\beta^* *\beta)(\eta_\tau)=\beta^*(\eta_\tau) \beta(\eta_\tau),
$$ 
which implies the claim.
\end{proof}
We can now obtain the following operator norm estimate.

\begin{Prop}\label{p:norm}
Let $\beta$ be a Haar-uniform probability measure supported on a bi-$U_S$-invariant
bounded subset $B\subset G_S$. 
Then
$$
\|\pi_S(\beta)\|\ll_{S,\delta} m_{S}(B)^{-\frac{1}{\mathfrak{q}_S({\rm G})}+\delta}.
$$
for every $\delta>0$.
\end{Prop}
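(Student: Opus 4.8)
The plan is to combine Proposition~\ref{p:norm0} with the $L^q$-integrability of spherical functions provided by Corollary~\ref{cor:unif} (equivalently by Proposition~\ref{p:lp}), using the Kunze--Stein phenomenon in the form of a convolution inequality to bound $\beta(\eta_\tau)$ for the relevant spherical functions. First I would fix $q > \mathfrak{q}_S({\rm G})$ and recall from Proposition~\ref{p:norm0} that
$$
\|\pi_S(\beta)\| \le \sup\left\{\beta(\eta_\tau):\, \tau\in\hat G_S^1,\, \tau\prec\pi_S\right\},
$$
using that $\beta$ is a probability measure (so $\beta^* = \beta$ after the bi-$U_S$-invariance symmetrization, or one applies the general case of the proposition). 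Since $\beta$ is the Haar-uniform probability measure on the bounded bi-$U_S$-invariant set $B$, we have $\beta = m_S(B)^{-1}\, \mathbf{1}_B\, dm_S$, so
$$
\beta(\eta_\tau) = m_S(B)^{-1}\int_B \eta_\tau(g)\, dm_S(g) \le m_S(B)^{-1}\,\|\mathbf{1}_B\|_{q'}\,\|\eta_\tau\|_q
$$
by Hölder's inequality, where $q'$ is the conjugate exponent. Now $\|\mathbf{1}_B\|_{q'} = m_S(B)^{1/q'} = m_S(B)^{1 - 1/q}$, so
$$
\beta(\eta_\tau) \le m_S(B)^{-1/q}\,\|\eta_\tau\|_q.
$$

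Next I would invoke the uniform bound from the corollary following Proposition~\ref{p:lp}: since $q > \mathfrak{q}_S({\rm G})$ and every $\tau\in\hat G_S^1$ with $\tau\prec\pi_S$ has integrability exponent at most $\mathfrak{q}_S({\rm G})$, the supremum $\sup\{\|\eta_\tau\|_q:\, \tau\in\hat G_S^1,\,\tau\prec\pi_S\}$ is finite; call it $C_{S,q}$. Combining the two displays gives
$$
\|\pi_S(\beta)\| \le C_{S,q}\, m_S(B)^{-1/q}.
$$
Finally, given $\delta > 0$, choose $q = q(\delta)$ with $\mathfrak{q}_S({\rm G}) < q < \left(\frac{1}{\mathfrak{q}_S({\rm G})} - \delta\right)^{-1}$ (possible since $\mathfrak{q}_S({\rm G}) > 0$ and the target exponent $\frac{1}{\mathfrak{q}_S({\rm G})} - \delta$ is strictly less than $\frac{1}{\mathfrak{q}_S({\rm G})}$); then $-1/q > -\frac{1}{\mathfrak{q}_S({\rm G})} + \delta$, and since $m_S(B) \le 1$ is bounded — wait, one must be careful about the direction of the inequality when $m_S(B) > 1$: when $m_S(B)\ge 1$ the bound $m_S(B)^{-1/q} \le m_S(B)^{-1/\mathfrak{q}_S({\rm G}) + \delta}$ still holds since the exponent on the right is larger and the base exceeds $1$; when $m_S(B) < 1$ it holds because the base is less than $1$ and... here instead I would simply note $B$ is bounded so $m_S(B)$ ranges over a bounded set and absorb the discrepancy into the implied constant, or more cleanly observe $-1/q \ge -1/\mathfrak{q}_S({\rm G}) + \delta$ combined with $m_S(B)$ bounded above gives $m_S(B)^{-1/q}\ll_{S,\delta} m_S(B)^{-1/\mathfrak{q}_S({\rm G})+\delta}$. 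This yields $\|\pi_S(\beta)\| \ll_{S,\delta} m_S(B)^{-\frac{1}{\mathfrak{q}_S({\rm G})}+\delta}$.

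The main technical point to be careful about is the passage from the pointwise $L^q$-estimate on the individual spherical functions $\eta_\tau$ to the \emph{uniform} bound $C_{S,q} < \infty$ over all $\tau\prec\pi_S$; this is exactly what the corollary after Proposition~\ref{p:lp} provides, and its proof in turn rests on the uniformity over $v$ of the volume estimate \eqref{eq:vol1} and of the constant $c_\epsilon$ in Lemma~\ref{l:sph_est}(ii), together with the convergence of $\prod_{v\in S}(1-q_v^{-s})^{-1}$ for $s > \sigma_S$. A secondary subtlety is justifying that only spherical $\tau$ contribute: this is handled inside Proposition~\ref{p:norm0}, which shows $\tau(\beta) = 0$ for $\tau\notin\hat G_S^1$ since $\beta$ is bi-$U_S$-invariant. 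With those inputs in hand the argument is a short Hölder estimate, so I do not anticipate a genuine obstacle here beyond bookkeeping with the exponents and the $O$-constants.
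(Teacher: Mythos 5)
Your argument is essentially identical to the paper's proof: pass through Proposition~\ref{p:norm0}, apply H\"older's inequality to obtain $\beta(\eta_\tau)\le m_S(B)^{-1/q}\|\eta_\tau\|_q$ for any $q>\mathfrak{q}_S({\rm G})$, and invoke the uniform finiteness of $\sup_{\tau\prec\pi_S}\|\eta_\tau\|_q$ from the corollary following Proposition~\ref{p:lp}. One minor remark: the reference to the Kunze--Stein phenomenon is not really apt (the key step is plain H\"older, not a convolution inequality), and the intermediate sign assertions about $-1/q$ versus $-1/\mathfrak{q}_S({\rm G})+\delta$ get garbled, but your final bookkeeping — exponent comparison for $m_S(B)\ge1$, absorbing the bounded range $m_S(B)<1$ (bounded below by $m_S(U_S)>0$) into the implied constant — is correct and in fact supplies detail the paper leaves implicit.
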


\begin{proof}
In view of Proposition \ref{p:norm0} we need to establish a uniform estimate 
$$
\beta(\eta_\tau)=\frac{1}{m_{S}(B)}\int_B \eta_\tau(g)\, dm_{S}(g),
$$
where $\eta_\tau$ is the spherical function of a representation $\tau\in \hat G_S^1$
which is weakly contained in $\pi_S$.
By H\"older's inequality, for $q>\mathfrak{q}_S({\rm G})$, we have
$$
\beta(\eta_\tau)\le \frac{1}{m_{S}(B)} \|\chi_B\|_{(1-1/q)^{-1}} \|\eta_\tau\|_q
=m_{S}(B)^{-1/q}\|\eta_\tau\|_q.
$$
Now the claim follows from Proposition \ref{p:norm0} and Corollary \ref{cor:unif}. 
\end{proof}

\section{Mean ergodic theorem}\label{sec:mean}

We keep the notation from the previous section.
In particular, ${\rm G}$ denotes a connected semisimple group defined over a number field $K$.
Our aim is to prove the mean ergodic theorem for the space 
$\Upsilon:=G_{V_K}/{\rm G}(K)$ equipped with the invariant probability measure $\mu$.
For $S\subset V_K$, we consider the natural action of $G_{V_K\backslash S}$ on $\Upsilon$ and,
given a Haar-uniform probability measure $\beta$ on $G_{V_K\backslash S}$, 
the averaging operator
\begin{equation}\label{eq:average}
\pi_{V_K\backslash S}(\beta)\phi(\varsigma)=\int_{G_{V_K\backslash
    S}}\phi(g^{-1}\varsigma)\,d\beta(g), \quad \phi\in L^2(\Upsilon).
\end{equation}

We first consider the case when the group ${\rm G}$ is simply connected.
For simply connected groups, the mean ergodic theorem admits a simple version,
but the general case requires more delicate
considerations because of the presence of nontrivial automorphic characters.

Let $\mathcal{X}_{aut}(G_{V_K})$ be the set of automorphic characters, that is,
the set consisting of continuous unitary characters $\chi$ 
of $G_{V_K}$ such that $\chi({\rm G}(K))=1$.

\begin{Lemma}\label{l:char_sc}
If ${\rm G}$ is simply connected, then $\mathcal{X}_{aut}(G_{V_K})=1$.
\end{Lemma}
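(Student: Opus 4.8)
The claim is that a connected simply connected almost simple $K$-group $\mathrm{G}$ has no nontrivial automorphic characters on $G_{V_K}$, i.e. every continuous unitary character $\chi$ of the restricted direct product $G_{V_K}=\prod'_{v} G_v$ that kills $\mathrm{G}(K)$ is trivial. The plan is to reduce this to the well-known fact that a simply connected semisimple group has no nontrivial characters at all, even over the local fields $K_v$, because such a group equals its own commutator subgroup (as an abstract group, for $v$ non-Archimedean this is a theorem of Tits; over $\R$ or $\C$ it follows from connectedness and simple-connectedness via the structure of the Lie algebra).

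\textbf{Step 1: Local triviality.} First I would recall that for each place $v$, the group $G_v=\mathrm{G}(K_v)$, when $\mathrm{G}$ is isotropic over $K_v$, is generated by its unipotent $K_v$-points (Borel--Tits), hence is perfect and admits no nontrivial continuous homomorphism to an abelian group; in particular $\chi|_{G_v}$ is trivial for every $v$ at which $\mathrm{G}$ is isotropic. If $\mathrm{G}$ is anisotropic at $v$ (only finitely many such $v$, and necessarily $v$ Archimedean or the group is a form of type $A_n$ coming from a division algebra), the group $G_v$ is still perfect: over $\R$ or $\C$ because a connected simply connected semisimple Lie group is perfect, and in the $p$-adic anisotropic case because $\mathrm{G}=\mathrm{SL}_1(D)$ for a division algebra $D$, whose group of norm-one elements is known to be perfect. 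In all cases $\chi|_{G_v}\equiv 1$.

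\textbf{Step 2: From the local factors to the restricted product.} A continuous character $\chi$ of $G_{V_K}$ restricts to a character on each $G_v$ (embedded as a factor), and these restrictions are trivial by Step 1. Since $G_{V_K}$ is the restricted direct product with respect to the compact open subgroups $U_v$, the subgroup algebraically generated by all the factors $G_v$ together is dense in $G_{V_K}$: indeed any element of the restricted product is literally a finite product of elements, each supported at a single place, and these finite products already form the algebraic direct sum, which is dense. By continuity, $\chi$ is determined by its values on this dense subgroup, where it is $1$; hence $\chi\equiv 1$ on $G_{V_K}$, so $\mathcal{X}_{aut}(G_{V_K})=1$.

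\textbf{Main obstacle.} The only genuinely nontrivial input is the perfectness of the local groups $G_v$, and this is a standard structural fact rather than something to be proved here; the rest is a soft density argument in the restricted direct product. One should just be slightly careful to invoke simple-connectedness (not merely semisimplicity) at the anisotropic Archimedean places, since $\mathrm{SO}(n)$-type groups would be counterexamples to perfectness were it not for the passage to the simply connected cover. So the proof is short: cite Borel--Tits (or \cite{PlaRa}, \S7) for perfectness of the isotropic $G_v$, dispose of the finitely many anisotropic places by the simply-connected-Lie-group / division-algebra facts, and then conclude by the density of the algebraic direct sum in the restricted direct product.
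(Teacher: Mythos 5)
Your proof follows a different route than the paper's, and it has a genuine gap.

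\textbf{What the paper does.} The paper's proof does not attempt to show $\chi|_{G_v}=1$ at every place. It picks a single place $v$ at which ${\rm G}$ is isotropic (such a place exists for a simply connected simple $K$-group by \cite[Theorem~6.7]{PlaRa}); at that place $G_v$ equals its own commutator subgroup, so $\chi(G_v)=1$. Then it invokes strong approximation: since ${\rm G}$ is simply connected and isotropic at $v$, ${\rm G}(K)G_v$ is dense in $G_{V_K}$. Because $\chi$ kills both ${\rm G}(K)$ (by automorphicity) and $G_v$, continuity gives $\chi\equiv 1$. Note that the hypothesis $\chi({\rm G}(K))=1$ is used in an essential way.

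\textbf{Where your argument breaks.} Your Step 1 claims that $G_v$ is perfect at \emph{every} place, including the non-Archimedean anisotropic ones, on the grounds that $\mathrm{SL}_1(D)$ for a $p$-adic division algebra is perfect. This is false in general. Over a non-Archimedean local field, $\mathrm{SL}_1(D)$ is a compact profinite group whose abelianization was computed by Riehm (Amer.\ J.\ Math.\ 92, 1970) and is typically a nontrivial finite group. What \emph{is} true for local fields is $\mathrm{SK}_1(D)=\mathrm{SL}_1(D)/\bigl([D^\times,D^\times]\cap\mathrm{SL}_1(D)\bigr)=1$ (Wang), but this is a statement about commutators taken in the larger group $D^\times$, not in $\mathrm{SL}_1(D)$ itself, and does not give perfectness of $\mathrm{SL}_1(D)$. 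Consequently $G_{V_K}$ can carry nontrivial continuous characters (pulled back from a single anisotropic finite place), so your argument attempts to prove the stronger statement that \emph{all} continuous characters of $G_{V_K}$ are trivial, which is simply not true. The lemma genuinely requires the automorphicity condition $\chi({\rm G}(K))=1$, and the natural way to exploit it is precisely the strong approximation argument the paper uses. Your Step 2 (density of the algebraic direct sum in the restricted product) is fine but irrelevant once Step 1 fails.

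\textbf{How to repair.} Replace the attempt to treat all places individually: pick one isotropic place, kill $\chi$ there by perfectness (Borel--Tits, or \cite[\S7.2]{PlaRa}), and then use $\chi({\rm G}(K))=1$ together with strong approximation to conclude.
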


\begin{proof}
The group ${\rm G}$ is isotropic over $K_v$ for some $v\in V_K$ (see \cite[Theorem 6.7]{PlaRa}.
Then the group $G_v$ coincides with its commutator (see \cite[\S7.2]{PlaRa}), and hence 
$\chi(G_v)=1$ for every character $\chi$ of $G_{V_K}$.
Since by the strong approximation property ${\rm G}(K)G_v$ is dense in $G_{V_K}$,
the claim follows.
\end{proof}

Lemma \ref{l:char_sc} implies that for simply connected groups, $L_{00}^2(G_{V_K}/{\rm G}(K))$
is the space of functions with zero integral. Hence, Proposition \ref{p:norm} gives

\begin{Theorem}[mean ergodic theorem]\label{cor:mean_simply connected}
Assume that ${\rm G}$ is simply connected, and let $\beta$ be the Haar-uniform probability
measure supported on bi-$U_{V_K\backslash S}$-invariant subset $B$ of $G_{V_K\backslash S}$.
Then for every $\phi\in L^2(\Upsilon)$,
$$
\left\| \pi_{V_K\backslash S}(\beta)\phi-\int_\Upsilon\phi\,d\mu\right\|_2\ll_{S,\delta}
m_{V_K\backslash S}(B)^{-\frac{1}{\mathfrak{q}_{V_K\backslash S}({\rm G})}+\delta}
$$
for every $\delta>0$.
\end{Theorem}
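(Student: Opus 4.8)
The plan is to deduce the estimate directly from the operator-norm bound of Proposition~\ref{p:norm}, applied to the unitary representation $\pi_{V_K\backslash S}$ of $G_{V_K\backslash S}$ on $L^2_{00}(\Upsilon)$, once the constant functions have been split off. The one point at which the simply connected hypothesis is used is the identification of the relevant invariant subspace: by Lemma~\ref{l:char_sc} the only automorphic character of $G_{V_K}$ is trivial, so $L^2_{00}(\Upsilon)$ --- defined as the orthogonal complement of all automorphic characters --- coincides with the zero-mean subspace $\{\phi\in L^2(\Upsilon):\ \int_\Upsilon\phi\,d\mu=0\}$. Hence every $\phi\in L^2(\Upsilon)$ decomposes orthogonally as $\phi=\phi_0+c$ with $c=\int_\Upsilon\phi\,d\mu$ and $\phi_0\in L^2_{00}(\Upsilon)$, and $\|\phi_0\|_2\le\|\phi\|_2$.

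Next I would record the elementary computation on each piece. Since $\beta$ is a probability measure and the $G_{V_K\backslash S}$-action on $\Upsilon$ preserves $\mu$ and fixes the constants, $\pi_{V_K\backslash S}(\beta)c=c$; and $\pi_{V_K\backslash S}(\beta)$ leaves the invariant subspace $L^2_{00}(\Upsilon)$ invariant. Therefore
$$
\pi_{V_K\backslash S}(\beta)\phi-\int_\Upsilon\phi\,d\mu=\pi_{V_K\backslash S}(\beta)\phi_0 ,
$$
and so
$$
\left\|\pi_{V_K\backslash S}(\beta)\phi-\int_\Upsilon\phi\,d\mu\right\|_2\le\bigl\|\pi_{V_K\backslash S}(\beta)|_{L^2_{00}(\Upsilon)}\bigr\|\cdot\|\phi\|_2 .
$$

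Finally I would invoke Proposition~\ref{p:norm} with the place set $V_K\backslash S$ playing the role of $S$ there: after this relabelling the representation denoted $\pi_S$ in that proposition is exactly $\pi_{V_K\backslash S}$ acting on $L^2_{00}(G_{V_K}/{\rm G}(K))$, and $\beta$ is by hypothesis the Haar-uniform probability measure on the bounded, bi-$U_{V_K\backslash S}$-invariant set $B\subset G_{V_K\backslash S}$; hence
$$
\bigl\|\pi_{V_K\backslash S}(\beta)|_{L^2_{00}(\Upsilon)}\bigr\|\ll_{S,\delta} m_{V_K\backslash S}(B)^{-\frac{1}{\mathfrak{q}_{V_K\backslash S}({\rm G})}+\delta}
$$
for every $\delta>0$, and combining the last two displays yields the claim (with the factor $\|\phi\|_2$ understood on the right-hand side). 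I do not expect any real obstacle here: all the analytic content has been packaged into Propositions~\ref{p:lp}, \ref{p:norm0} and \ref{p:norm} and Corollary~\ref{cor:unif}, and the remaining argument is merely the orthogonal splitting into constants and their complement --- it is precisely the identity ``$L^2_{00}(\Upsilon)=$ zero-mean subspace'' that breaks down when $\rm G$ is not simply connected, which is the reason that case is treated separately.
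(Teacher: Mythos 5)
Your proposal is correct and follows exactly the paper's route: Lemma~\ref{l:char_sc} identifies $L^2_{00}(\Upsilon)$ with the zero-mean subspace, and then Proposition~\ref{p:norm} (applied with $V_K\backslash S$ in place of $S$) gives the operator-norm bound; the paper states this in one line and you merely spell out the orthogonal splitting $\phi=\phi_0+\int\phi\,d\mu$. You also correctly observe that the factor $\|\phi\|_2$ is implicit on the right-hand side of the displayed inequality.
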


The general case of Theorem  \ref{cor:mean_simply connected} requires two auxiliary lemmas. 

\begin{Lemma}\label{l:normal}
Let $p:\tilde{\rm G}\to {\rm G}$ be a simply connected cover 
of a connected semisimple group defined over $K$. Then for every 
$S\subset V_K$, $p(\tilde G_S)$ is a normal co-Abelian subgroup of $G_S$.
Moreover, if $S$ is finite, then $p(\tilde G_S)$ has finite index in $G_S$.
\end{Lemma}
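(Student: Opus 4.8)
The plan is to deduce all three assertions from the standard structure theory of the isogeny $p:\tilde{\rm G}\to{\rm G}$ together with properties of the coboundary map in Galois cohomology. First I would recall that $p$ is a central $K$-isogeny with kernel ${\rm F}=\Ker(p)$, a finite central $K$-subgroup scheme of $\tilde{\rm G}$. For each $v\in V_K$ the induced map $p_v:\tilde G_v\to G_v$ is a continuous homomorphism of locally compact groups whose image $p(\tilde G_v)$ is a closed normal subgroup, and the quotient $G_v/p(\tilde G_v)$ injects into $H^1(K_v,{\rm F})$ via the connecting homomorphism in the long exact sequence of Galois cohomology associated to $1\to{\rm F}\to\tilde{\rm G}\to{\rm G}\to 1$. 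Since ${\rm F}$ is a finite commutative group scheme, $H^1(K_v,{\rm F})$ is a finite abelian group, so $G_v/p(\tilde G_v)$ is finite abelian; in particular $p(\tilde G_v)$ is normal and co-abelian in $G_v$, and has finite index.

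Next I would pass from the local statements to $G_S$. By definition $G_S$ is the restricted direct product of the $G_v$, $v\in S$, with respect to the compact open subgroups $U_v={\rm G}(O_v)$, and likewise $\tilde G_S$ is the restricted product of the $\tilde G_v$ with respect to $\tilde U_v=\tilde{\rm G}(O_v)$. For all but finitely many $v$ the isogeny $p$ has good reduction and ${\rm F}$ extends to a finite flat group scheme over $O_v$ with $H^1(O_v/\mathfrak m_v,\ldots)$ controlling things; the key point, which I would cite from \cite[\S3.3,\S6]{PlaRa} or prove directly, is that for almost all $v$ one has $p(\tilde U_v)=U_v$, i.e. $p(\tilde G_v)\supset U_v$, so that the restricted-product structures are compatible and $p(\tilde G_S)=\{(g_v)_v\in G_S:\ g_v\in p(\tilde G_v)\ \forall v,\ g_v\in U_v\ \text{for a.a. }v\}$ is a closed normal subgroup of $G_S$. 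Normality is inherited coordinatewise. Co-abelianness follows because $G_S/p(\tilde G_S)$ embeds into the restricted product of the finite abelian groups $G_v/p(\tilde G_v)$ (with respect to the trivial subgroups, since almost all of them vanish), which is abelian.

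Finally, when $S$ is finite there is no restricted-product subtlety: $G_S=\prod_{v\in S}G_v$ and $p(\tilde G_S)=\prod_{v\in S}p(\tilde G_v)$ is a finite-index subgroup, being a finite product of finite-index subgroups. The main obstacle I anticipate is the ``almost all $v$'' claim $p(\tilde U_v)=U_v$: one must check that over the ring of integers $O_v$ the isogeny remains surjective on $O_v$-points, which uses that ${\rm F}$ spreads out to an \'etale (or at least finite flat) group scheme over $O_v$ for almost all $v$ and that $H^1_{\text{\'et}}(O_v,{\rm F})$ injects into $H^1(K_v,{\rm F})$ with image meeting the class of $U_v$ trivially — equivalently, that $U_v$ lifts. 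This is exactly the kind of statement recorded in \cite[Ch.~3]{PlaRa}, so in the writeup I would isolate it as the one external input and otherwise keep the argument self-contained.
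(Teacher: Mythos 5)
Your argument has a genuine flaw at the one step you single out as the ``main obstacle'' and the ``one external input'': the claim that $p(\tilde U_v)=U_v$ for almost all $v$, i.e.\ that the isogeny is surjective on $O_v$-points almost everywhere. This is false in general. Take $\tilde{\rm G}={\rm SL}_2$, ${\rm G}={\rm PGL}_2$, $\ker(p)=\mu_2$. For every finite place $v$ of odd residue characteristic, the class of $g\in{\rm PGL}_2(O_v)$ modulo $p({\rm SL}_2(O_v))$ is given by $\det\tilde g\in O_v^\times/(O_v^\times)^2$ for any lift $\tilde g\in{\rm GL}_2(O_v)$, and $O_v^\times/(O_v^\times)^2$ has order $2$ at every odd place. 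So $[U_v:p(\tilde U_v)]=2$ at almost every place, not $1$. The cohomological heuristic you sketch does not give vanishing of the obstruction: $H^1_{\text{\'et}}(O_v,\mu_2)\cong O_v^\times/(O_v^\times)^2$ is nontrivial, and it is precisely this group that obstructs lifting.

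The good news is that the false claim is stronger than what your architecture actually requires. What you need is merely that the restricted-product structures are compatible, i.e.\ that for almost all $v$ one has $p(\tilde G_v)\cap U_v=p(\tilde U_v)$. This \emph{is} true: $p$ has finite kernel and is therefore proper, so $p^{-1}(U_v)\cap\tilde G_v$ is a compact subgroup of $\tilde G_v$ containing $\tilde U_v$; for almost all $v$ the hyperspecial subgroup $\tilde U_v$ is maximal compact, forcing equality. With this weaker input, your description of $p(\tilde G_S)$ as a restricted product, the coordinatewise normality, and the embedding of $G_S/p(\tilde G_S)$ into $\prod_v G_v/p(\tilde G_v)$ all go through and yield the lemma. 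You should replace the surjectivity claim by this compatibility statement.

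For comparison, the paper avoids the restricted-product bookkeeping entirely and argues with commutator subgroups: from the local exact sequence $\tilde G_v\to G_v\to H^1(K_v,\ker p)$ one gets that $p(\tilde G_v)$ is normal, co-abelian and of finite index in $G_v$, hence contains $[G_v,G_v]$; from a second exact sequence over $O_v$ (via \cite[Proposition~6.8]{PlaRa}) one gets, for almost all $v$, that $p(\tilde{\rm G}(O_v))$ contains $[{\rm G}(O_v),{\rm G}(O_v)]$. Since $G_S$ is the increasing union of the groups $G_{S,S'}=\bigl(\prod_{v\in S'}G_v\bigr)\times\bigl(\prod_{v\in S\setminus S'}{\rm G}(O_v)\bigr)$ over finite $S'\subset S$, one concludes that $p(\tilde G_S)\supset[G_S,G_S]$, which gives normality and co-abelianness at once. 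This is essentially the same cohomological input as yours, packaged so that one never needs to know what $U_v\cap p(\tilde G_v)$ is precisely.
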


\begin{proof}
For every $v\in V_K$, we have the exact sequence in Galois cohomology 
$$
\tilde G_v\stackrel{p}{\to} G_v \to H^1(K_v,\hbox{ker}(p)).
$$
Since $ H^1(K_v,\hbox{ker}(p))$ is Abelian, $p(\tilde G_v)$ is a normal co-Abelian subgroup of $G_v$.
This implies that $p(\tilde G_v)$ contains the commutator of $G_v$.
Since $H^1(K_v,\hbox{ker}(p))$ is finite group (see \cite[\S6.4]{PlaRa}),
$p(G_v)$ has finite index in $G_v$.
Also for almost all $v\in V_K$, we have the exact sequence
$$
\tilde{\rm  G}(O_v)\stackrel{p}{\to} {\rm  G}(O_v) \to
H^1\left(K_v^{ur}/K_v,\hbox{ker}(p)(O_v^{ur})\right)
$$
(see \cite[Proposition 6.8]{PlaRa}),
where $K_v^{ur}$ denotes the unramified closure of $K_v$ and
$O_v^{ur}$ denotes its ring of integers. Therefore, $p(\tilde{\rm  G}(O_v))$
is a normal co-Abelian subgroup of ${\rm  G}(O_v)$, and in particular,
$p(\tilde{\rm  G}(O_v))$ contains the commutator of ${\rm  G}(O_v)$.

The group $G_S$ is a union of the subgroups $G_{S,S^\prime}$ defined by 
$$
G_{S,S'}:=\left(\prod_{v\in S'} G_v\right)\left(\prod_{v\in S'} {\rm G}(O_v)\right)\,\,,
$$
as $S'$ runs over finite subsets of $S$.
It follows from the first paragraph that $p(\tilde G_S)$ contains
the commutator of $G_{S,S'}$ for every finite $S'\subset S$.
Hence, $p(\tilde G_S)$ contains the commutator of
$G_S$, and $p(\tilde G_S)$ is a normal co-Abelian subgroup of $G_S$.
\end{proof}

Let $S'$ be a finite subset of $V_K$.
For an open subgroup $U$ of $G_{V^f_K\backslash S'}$, we denote by $\mathcal{X}_{aut}(G_{V_K})^U$ the subset of
$\mathcal{X}_{aut}(G_{V_K})$ consisting of $U$-invariant characters.
Let $G^U$ denote the kernel of $\mathcal{X}_{aut}(G_{V_K})^U$ in $G_{V_K}$.

\begin{Lemma}\label{l:characters}
The group $\mathcal{X}_{aut}(G_{V_K})^U$ is finite, and
$G^U$ has finite index in $G_{V_K}$.
\end{Lemma}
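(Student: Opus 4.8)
The plan is to deduce the finiteness of $\mathcal{X}_{aut}(G_{V_K})^U$ from the fact that $G_{V_K}/\mathrm{G}(K)$ carries a \emph{finite} invariant measure together with the co-Abelian structure established in Lemma \ref{l:normal}. First I would observe that every $\chi\in\mathcal{X}_{aut}(G_{V_K})$ is a character of the quotient $G_{V_K}/[G_{V_K},G_{V_K}]\cdot\mathrm{G}(K)$, and by Lemma \ref{l:normal} applied to a simply connected cover $p:\tilde{\mathrm G}\to\mathrm G$, the subgroup $p(\tilde G_{V_K})$ is normal and co-Abelian in $G_{V_K}$; since $p(\tilde G_{V_K})\supset\overline{[G_{V_K},G_{V_K}]}$ and, by Lemma \ref{l:char_sc} applied to $\tilde{\mathrm G}$ combined with strong approximation, $\tilde{\mathrm G}(K)\cdot p(\tilde G_{V_K})$ is ``large'', one sees that $\chi$ factors through the finitely-generated-by-compact Abelian group $A:=G_{V_K}/\big(p(\tilde G_{V_K})\cdot\mathrm{G}(K)\big)$. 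Thus $\mathcal{X}_{aut}(G_{V_K})=\widehat{A}$.

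Next I would bring in the $U$-invariance and the integrality at almost all places. For all but finitely many $v\in V_K$ the local cover map $p:\tilde{\mathrm G}(O_v)\to\mathrm G(O_v)$ is surjective modulo a group killed by $H^1(K_v^{ur}/K_v,\ker(p)(O_v^{ur}))$, and in fact is onto for almost all $v$ (as recalled in the proof of Lemma \ref{l:normal}); hence $A$ is built, up to a finite Abelian piece coming from the finitely many bad places and from $\mathrm{G}(K)$, out of the groups $G_v/p(\tilde G_v)$ at the bad places, which are finite, and out of a lattice-type contribution controlled by class-number finiteness. Imposing $U$-invariance for an open subgroup $U\le G_{V_K^f\setminus S'}$ forces $\chi$ to be trivial on the image of $U$ in $A$, and the image of $U$ is an open (hence finite-index) subgroup of the relevant factor because $U$ is open; therefore $\mathcal{X}_{aut}(G_{V_K})^U$ is a subgroup of the Pontryagin dual of the finite quotient $A/\overline{\mathrm{image}(U)}$, which is finite. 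Finally, $G^U=\bigcap_{\chi\in\mathcal{X}_{aut}(G_{V_K})^U}\ker\chi$ is a finite intersection of finite-index subgroups, hence of finite index in $G_{V_K}$.

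The main obstacle I anticipate is the bookkeeping needed to show that $A$ — or at least the part of it not killed by an open subgroup $U$ — is genuinely finite rather than merely compact-by-finitely-generated: one must use finiteness of class numbers of $K$ (equivalently, that $\mathrm{G}(K)$ together with $\prod_v\mathrm G(O_v)$ has finite index or finite co-volume in the appropriate Abelianization) and the finiteness of the local cohomology groups $H^1(K_v,\ker p)$, collecting these uniformly over $v$. I would carry this out by reducing to the case $\mathrm G$ adjoint or to the torus $\mathrm G/[\mathrm G,\mathrm G]^{\mathrm{der}}$-type quotient where class field theory applies directly, citing \cite[\S6.4, \S7.2]{PlaRa}, and then transferring back. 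Once the structure of $A$ is pinned down, the $U$-invariance argument and the finite-intersection argument for $G^U$ are routine.
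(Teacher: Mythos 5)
Your overall strategy coincides with the paper's: use Lemma~\ref{l:char_sc} together with the simply connected cover $p:\tilde{\rm G}\to{\rm G}$ to show that every automorphic character kills $p(\tilde G_{V_K})$, invoke Lemma~\ref{l:normal} to see that $p(\tilde G_{V_K})$ is normal co-Abelian, and then combine finiteness of local Galois cohomology with class-number finiteness to conclude. You correctly identify every ingredient the paper uses, and phrasing things via the quotient $A=G_{V_K}/\bigl(p(\tilde G_{V_K}){\rm G}(K)\bigr)$ rather than via the subgroup $H={\rm G}(K)p(\tilde G_{V_K})U$ is an equivalent reformulation, since $A/\overline{\mathrm{image}(U)}\cong G_{V_K}/H$ (up to closures).

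However, there is a genuine gap in your second paragraph, and it sits exactly at the crux of the lemma. You write that ``the image of $U$ is an open (hence finite-index) subgroup of the relevant factor because $U$ is open,'' and from this conclude that $A/\overline{\mathrm{image}(U)}$ is finite. Openness of a subgroup implies finite index only in a compact group, and neither $A$ nor the ``relevant factor'' (which you do not pin down) is obviously compact; moreover $U$ lives in $G_{V_K^f\backslash S'}$, so its image in $A$ gives you no control whatsoever over the Archimedean directions or the places in $S'$. Proving that $A/\overline{\mathrm{image}(U)}$ is finite is therefore not ``bookkeeping'' but the whole content of the lemma, and it requires two specific inputs you only gesture at: (i) finiteness of the local groups $H^1(K_v,\ker p)$ to show that $p(\tilde G_{V_K^\infty\cup S'})$ has finite index in $G_{V_K^\infty\cup S'}$, which handles the directions $U$ does not see; and (ii) class-number finiteness, i.e.\ finiteness of the double-coset space ${\rm G}(K)\backslash G_{V_K}/\bigl(G_{V_K^\infty}U_{V_K^f\cap S'}U\bigr)$ as in \cite[\S 8.1]{PlaRa}, which, combined with (i), gives finitely many double cosets of ${\rm G}(K)$ and $p(\tilde G_{V_K})U$ and hence finiteness of $G_{V_K}/H$ because $p(\tilde G_{V_K})$ is co-Abelian. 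Until you actually carry out these two steps, the central finiteness claim is unjustified and the ``open implies finite index'' shortcut does not repair it.
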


\begin{proof}
Let $p:\tilde {\rm G}\to {\rm G}$ be the simply connected cover.
For every $\chi\in \mathcal{X}_{aut}(G_{V_K})$, we have 
$\chi\circ p \in \mathcal{X}_{aut}(\tilde G_{V_K})$. Hence, it follows from
Lemma \ref{l:char_sc} that $\chi\circ p=1$. We conclude that every $\chi\in
\mathcal{X}_{aut}(G_{V_K})^U$ vanishes on $H:={\rm G}(K)p(\tilde G_{V_K})U$.
By Lemma \ref{l:normal}, $p(\tilde G_{V_K})$ is a normal co-Abelian subgroup of $G_{V_K}$.
Hence it follows that $H$ is a normal co-Abelian subgroup  as well.
We claim that it has finite index in $G_{V_K}$.

For every $v\in V_K$, we have the exact sequence 
$$
\tilde G_v\to G_v\to H^1(K_v,\ker(p)),
$$
where the last term is finite by \cite[\S6.4]{PlaRa}.
This shows that $p(\tilde G_{V_K^\infty \cup S'})$ has finite index
in $G_{V_K^\infty\cup S'}$. The number of double cosets (i.e., the class number)
of the subgroups ${\rm  G}(K)$ and $G_{V_K^\infty}U_{V_K^f\cap S'}U$ in $G_{V_K}$ is finite
(see \cite[\S8.1]{PlaRa}). Then the number of double cosets of ${\rm G}(K)$ and $p(\tilde G_{V_K^\infty\cup S'})U$
in $G_{V_K}$ 
is finite as well. From this we conclude that the number of double cosets
of ${\rm G}(K)$ and $p(\tilde G_{V_K})U$ in $G_{V_K}$ is finite, and since $p(\tilde G_{V_K})$
is co-Abelian,  the factor group $G_{V_K}/H$ is finite.

We have shown above that every $\chi\in \mathcal{X}_{aut}(G_{V_K})^U$
factors through the finite factor group $G_{V_K}/H$.
This implies that $\mathcal{X}_{aut}(G_{V_K})^U$ is finite
and $G^U$ has finite index in $G_{V_K}$, as required.
\end{proof}

\begin{Theorem}[mean ergodic theorem]\label{c:mean}
Let $S$ be a subset of $V_K$ and  $S'$ a finite subset of $S$.
Let $U^0$ be a finite index subgroup of $U_{V_K^f\cap (S\backslash S')}$
and $U=U_{V_K^f\backslash S}U^0$.
Let $B$ be a bounded measurable subset of $G_{V_K\backslash S}\cap G^U$
which is $U_{V_K^f\backslash S}$-biinvariant and
$\beta$ the Haar-uniform probability measure supported on 
the subset $U^0B$ of $G_{(V_K\backslash S)\cup (V^f_K\backslash S')}$.
Then for every $\phi\in L^2(\Upsilon)$ such that 
$\supp(\phi)\subset G^U$, we have
$$
\left\|\pi_{(V_K\backslash S)\cup (V^f_K\backslash S')}(\beta)\phi -\left(\int_\Upsilon \phi\,
    d\mu\right)\xi_U \right\|_2\ll_{U^0, S,\delta}
m_{V_K\backslash S}(B)^{-\frac{1}{\mathfrak{q}_{V_K\backslash S}({\rm G})}+\delta}\|\phi\|_2
$$
for every $\delta>0$, where $\xi_U$ is the function on $\Upsilon$
such that $\xi_U=|G_{V_K}:G^{U}|$ on the open set $G^{U}/{\rm G}(K)\subset \Upsilon$
and $\xi_U=0$ otherwise.
\end{Theorem}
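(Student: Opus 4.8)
The plan is to split $L^2(\Upsilon)$, where $\Upsilon:=G_{V_K}/{\rm G}(K)$, into the subspace $L^2_{00}(\Upsilon)$, on which Proposition~\ref{p:norm} provides quantitative decay, and the closed span $\mathcal C$ of the automorphic characters, on which the averaging operator acts almost trivially and produces the term $\big(\int_\Upsilon\phi\,d\mu\big)\,\xi_U$. Let $\pi$ denote the translation representation of $G_{V_K}$ on $L^2(\Upsilon)$, so that the operator in \eqref{eq:average} is $\pi$ integrated against the given measure and its restriction to $L^2_{00}(\Upsilon)$ is the representation to which Proposition~\ref{p:norm} applies. The first observation is that $\beta$ is the product measure $\beta_B\otimes\beta_{U^0}$ of the Haar-uniform probability measure $\beta_B$ on $B\subset G_{V_K\backslash S}$ and the probability Haar measure $\beta_{U^0}$ on the compact group $U^0\subset G_{V_K^f\cap(S\backslash S')}$ (these sit over complementary sets of places), whence $\pi(\beta)=\pi(\beta_B)\,\pi(\beta_{U^0})$ with $\pi(\beta_{U^0})$ the orthogonal projection onto the $U^0$-fixed vectors.

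Next I would set up the structure of the relevant characters. Automorphic characters kill ${\rm G}(K)$, so ${\rm G}(K)\subseteq G^U$; by Lemma~\ref{l:characters} the quotient $F:=G_{V_K}/G^U$ is finite abelian of order $m:=|G_{V_K}:G^U|$, and $\mathcal{X}_{aut}(G_{V_K})^U$ is a point-separating subgroup of $\widehat{F}$ (because $G^U=\bigcap_\chi\ker\chi$), hence all of $\widehat{F}$; write $\mathcal{X}_{aut}(G_{V_K})^U=\{\chi_1,\dots,\chi_m\}$. Orthogonality of the characters of $F$ then gives, as functions on $\Upsilon$, the identity $\xi_U=\sum_{i=1}^m\chi_i$; in particular $\{\chi_i\}$ is orthonormal and $\xi_U\in\mathcal C$. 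I will also use that $U^0\subset U\subseteq G^U$, since $U$-invariance of a character $\chi$ means $\chi|_U=1$.

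Now fix $\phi$ with $\supp\phi\subset G^U/{\rm G}(K)$ and set $\phi':=\pi(\beta_{U^0})\phi$. As $U^0\subseteq G^U$ and $G^U$ is a group, $\phi'$ is again supported in $G^U/{\rm G}(K)$, with $\|\phi'\|_2\le\|\phi\|_2$ and $\int_\Upsilon\phi'\,d\mu=\int_\Upsilon\phi\,d\mu$ by invariance of $\mu$. Decompose orthogonally $\phi'=\phi'_{00}+\phi'_{\mathcal C}$ with $\phi'_{00}\in L^2_{00}(\Upsilon)$ and $\phi'_{\mathcal C}=\sum_{\chi}\langle\phi',\chi\rangle\chi\in\mathcal C$ (the sum over all automorphic characters). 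Since $\phi'$ is $U^0$-fixed and $\pi(\beta_{U^0})$ is a self-adjoint projection, $\langle\phi',\chi\rangle=0$ unless $\chi|_{U^0}=1$. Applying $\pi(\beta_B)$ one has $\pi(\beta_B)\chi=\overline{\big(\int_B\chi\,d\beta_B\big)}\,\chi$, and bi-$U_{V_K^f\backslash S}$-invariance of $B$ forces $\int_B\chi\,d\beta_B=0$ unless $\chi|_{U_{V_K^f\backslash S}}=1$, while $B\subseteq G^U$ makes this integral equal to $1$ once $\chi|_{G^U}=1$. The conditions $\chi|_{U^0}=1$ and $\chi|_{U_{V_K^f\backslash S}}=1$ together amount to $\chi|_U=1$, that is, $\chi\in\{\chi_1,\dots,\chi_m\}$, so $\pi(\beta_B)\phi'_{\mathcal C}=\sum_{i=1}^m\langle\phi',\chi_i\rangle\chi_i$. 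Finally $\overline{\chi_i}\equiv 1$ on $G^U/{\rm G}(K)\supseteq\supp\phi'$, so $\langle\phi',\chi_i\rangle=\int_\Upsilon\phi'\,d\mu=\int_\Upsilon\phi\,d\mu$ for every $i$, giving $\pi(\beta_B)\phi'_{\mathcal C}=\big(\int_\Upsilon\phi\,d\mu\big)\xi_U$.

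Putting this together, $\pi(\beta)\phi-\big(\int_\Upsilon\phi\,d\mu\big)\xi_U=\pi(\beta_B)\phi'_{00}$. Since $L^2_{00}(\Upsilon)$ is $G_{V_K}$-invariant and $\beta_B$ is supported in $G_{V_K\backslash S}$, the operator $\pi(\beta_B)$ restricted to $L^2_{00}(\Upsilon)$ is precisely $\pi_{V_K\backslash S}(\beta_B)$, and Proposition~\ref{p:norm} (applied to the bounded bi-$U_{V_K\backslash S}$-invariant set $B$) bounds its norm by $\ll_{S,\delta}m_{V_K\backslash S}(B)^{-1/\mathfrak{q}_{V_K\backslash S}({\rm G})+\delta}$; combined with $\|\phi'_{00}\|_2\le\|\phi\|_2$ this yields the asserted estimate. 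I expect the main difficulty to lie in the spectral bookkeeping of the third paragraph — determining exactly which automorphic characters survive passage through $U^0$, through $U_{V_K^f\backslash S}$, and through $U$, and checking that the survivors recombine into $\xi_U$ — together with the structural identification $\mathcal{X}_{aut}(G_{V_K})^U=\widehat{G_{V_K}/G^U}$; once these are in place the norm bound is immediate from Proposition~\ref{p:norm}, and the special case $\mathcal{X}_{aut}(G_{V_K})=\{1\}$ recovers Theorem~\ref{cor:mean_simply connected}.
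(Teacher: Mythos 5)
Your proof is correct and follows essentially the same strategy as the paper's: decompose $L^2(\Upsilon)$ into $L^2_{00}$ and the span of automorphic characters, identify $\xi_U=\sum_{\chi\in\mathcal{X}_{aut}(G_{V_K})^U}\chi$, show that $\pi(\beta)$ applied to the character component of $\phi$ produces $\big(\int_\Upsilon\phi\,d\mu\big)\xi_U$, and bound the $L^2_{00}$ component via Proposition~\ref{p:norm}. The only organizational difference is that you factor $\beta=\beta_B\otimes\beta_{U^0}$ and apply the two factors sequentially, whereas the paper uses a three-way decomposition $\mathcal{H}^0\oplus\mathcal{H}^1\oplus\mathcal{H}^2$ (with $\mathcal{H}^1$ the span of $U$-invariant automorphic characters and $\mathcal{H}^2$ the span of the remaining ones), showing directly that $\pi(\beta)$ annihilates $\mathcal{H}^2$ by $U$-invariance of $\beta$ and acts as the identity on $\mathcal{H}^1$ since $\supp(\beta)\subset G^U$; these are equivalent bookkeepings of the same argument.
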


\begin{proof}
We have the decomposition 
$$
L^2(\Upsilon)=\mathcal{H}^0\oplus \mathcal{H}^1\oplus \mathcal{H}^2
$$
where $\mathcal{H}^0$ is the orthogonal complement of $\mathcal{X}_{aut}(G_{V_K})$,
$\mathcal{H}^1$ is the (finite dimensional) span for $\mathcal{X}_{aut}(G_{V_K})^U$, and
$\mathcal{H}^2$ is the orthogonal complement of $\mathcal{H}^0\oplus \mathcal{H}^1$.
For $\phi\in L^2(\Upsilon)$, we have the corresponding decomposition
$$
\phi=\phi_0+\phi_1+\phi_2.
$$
We observe that the set $\mathcal{X}_{aut}(G_{V_K})^U$ can be identified with the group
of characters of the finite Abelian group $G_{V_K}/G^U$. Hence, it follows that
$$
\sum_{\chi\in \mathcal{X}_{aut}(G_{V_K})^U} \chi=\xi_U.
$$
Since $\mathcal{X}_{aut}(G_{V_K})^U$ forms an orthonormal basis  of $\mathcal{H}^1$,
\begin{align*}
\phi_1=\sum_{\chi\in \mathcal{H}^1} \left<\phi, \chi \right>\chi
=\left(\int_\Upsilon \phi\, d\mu\right) \sum_{\chi\in \mathcal{X}_{aut}(G_{V_K})^U} \chi
=\left(\int_\Upsilon \phi\, d\mu\right)\xi_U.
\end{align*}
Since the measure $\beta$ is $U$-invariant, for every $\chi\in \mathcal{X}_{aut}(G_{V_K})$ and
$u\in U$, we have
$$
\pi_{(V_K\backslash S)\cup (V_K^f\backslash S')}(\beta)\chi=
\chi(u)\pi_{(V_K\backslash S)\cup (V_K^f\backslash S')}(\beta)\chi.
$$
Therefore, if $\chi$ is not $U$-invariant, then
$$
\pi_{(V_K\backslash S)\cup (V_K^f\backslash S')}(\beta)\chi=0.
$$
This implies that
$$
\pi_{(V_K\backslash S)\cup (V_K^f\backslash S')}(\beta)|_{\mathcal{H}^2}=0.
$$
Also, since $\supp(\beta)\subset G^U$, we have
$$
\pi_{(V_K\backslash S)\cup (V_K^f\backslash S')}(\beta)|_{\mathcal{H}^1}=id.
$$
We conclude that
$$
\pi_{(V_K\backslash S)\cup (V_K^f\backslash S')}(\beta)\phi -\left(\int_\Upsilon \phi\, d\mu\right)\xi_U=
\pi_{(V_K\backslash S)\cup (V_K^f\backslash S')}(\beta)\phi_0.
$$
By Jensen inequality,
\begin{align*}
\left\|\pi_{(V_K\backslash S)\cup (V_K^f\backslash S')}(\beta)\phi_0\right\|_2
\ll_{U^0} \left\|\pi_{V_K\backslash S}(\beta')\phi_0\right\|_2,
\end{align*}
where $\beta'$ is the Haar-uniform probability measure supported on $B\subset G_{V_K\backslash S}$.
Therefore, by Proposition \ref{p:norm},
$$
\left\|\pi_{(V_K\backslash S)\cup (V_K^f\backslash S')}(\beta)\phi_0\right\|_2
\ll_{U^0,S,\delta} m_{V_K\backslash S}(B)^{-\frac{1}{\mathfrak{q}_{V_K\backslash S}({\rm G})}+\delta}\|\phi_0\|_2
$$
for every $\delta>0$. This implies the claim.
\end{proof}

\section{The duality principle}
\label{sec:duality}
\subsection{Duality and almost sure approximation on the group variety}
In this section we develop an instance of the duality principle in homogeneous spaces, which is analyzed more generally in \cite{GN3}. Heuristically speaking, the principle asserts that given a lattice subgroup $\Gamma$ of $G$, and another closed subgroup $H$,  the effective estimates on the ergodic behavior of $\Gamma$-orbits in $G/H$ can be deduced from the effective estimates on the ergodic behavior of $H$-orbits in $G/\Gamma$. This accounts for the essential role that the automorphic representation and its restriction to subgroups plays in our considerations.

Let ${\rm G}\subset \hbox{GL}_n$ be a linear algebraic group defined over a number field $K$.
In this section we relate the problem of Diophantine approximation on
${\rm G}$ and on its homogeneous spaces with ``shrinking target'' properties
of suitable dynamical systems.
More precisely, we consider the space $\Upsilon=G_{V_K}/{\rm G}(K)$ equipped with
the natural translation action of the group
$G_{V_K\backslash S}$ where $S\subset V_K$. We show that a point in $G_S$
can be approximated by rational points in ${\rm G}(K)$ with given accuracy
provided that the corresponding orbit of $G_{V_K\backslash S}$
in $\Upsilon$ can be used to approximate the identity coset with a suitable accuracy.

Let $\|\cdot\|_v$ be the maximum norm on $K_v^n$, that is, $\|x\|_v=\max_i |x_i|_v$ for $x\in K_v^n$.
We observe that for $g\in \GL_n(K_v)$,
\begin{equation}\label{eq:metric}
\|g\cdot x\|_v\le c_v(g) \|x\|_v,
\end{equation}
where $c_v(g)=\max_{i,j} |g_{ij}|_v$ for $v\in V_K^f$ and $c_v(g)=n \max_{i,j} |g_{ij}|_v$ for $v\in V_K^\infty$.
Note that $c_v(g)$ is bounded on compact subsets of $\GL_n(K_v)$, and 
$c_v(g)=1$ for $g\in\GL_n(O_v)$.

The height function $\height$ on $G_{V_K}$ is defined by
\begin{equation}\label{eq:hhh}
\height(g)=\prod_{v\in V_K} \max_{i,j} (1,|g_{ij}|_v), \quad g\in G_{V_K}.
\end{equation}
This extends the definition of the height function from (\ref{eq:height}).

We note for  future reference that given a bounded $\Omega\subset G_{V_K}$,
\begin{equation}\label{l:height}
\height(b_1 gb_2)\ll_\Omega \height(g)
\end{equation}
for every $b_1,b_2\in\Omega$ and $g\in G_{V_K}$.

For $v\in V_K$ and $\epsilon>0$, we set
$$
\mathcal{O}_v(\epsilon):=\{g\in G_v:\, \|g-e\|_v\le \epsilon\},
$$
and for any $S\subset V_K$,
\begin{equation}\label{eq:o_s}
\mathcal{O}_S(\epsilon):=\prod_{v\in S} \mathcal{O}_v(\epsilon)\footnote{Note that $\mathcal{O}_v(\epsilon)=\mathcal{O}_{\{v\}}(\epsilon)$ and $\mathcal{O}_v(1)=O_v\neq O_{\{v\}}$.}.
\end{equation}

Let $m$ be the Tamagawa measure on $G_{V_K}$. We refer to \cite[\S14]{vosk}
for a detailed construction of $m$. Given a nonzero left-invariant differential
$K$-form of degree $\dim ({\rm G})$ on ${\rm G}$, one defines the corresponding
local measures $m_v$ on $G_v$. We assume that the product 
$\prod_{v\in V_K^f} m_v({\rm G}(O_v))$ converges. 
Then the Tamagawa measure is given by
$$
m=\prod_{v\in V_K} m_v.
$$
We note that when ${\rm G}$ is a semisimple group, then the product 
of $m_v({\rm G}(O_v))$ is known to converge (see \cite[\S14]{vosk}).
This property is crucial for us to prove results on Diophantine approximation
that are uniform over finite subsets $S'$ of $V_K$.

\begin{remark} {\rm
When the product $\prod_{v\in V_K^f} m_v({\rm G}(O_v))$ diverges, then the Tamagawa
measure is defined by $m=\prod_{v\in V_K} \lambda_v m_v$ where $\lambda_v>0$ are
suitably chosen convergence factors. The arguments of this section can be modified
to deal with this case as well. However, one loses uniformity over subsets $S'$.
}
\end{remark}

We use the following estimates for the local measures.
Recall that $I_v=\{q_v^n\}_{n>0}$ for $v\in V_K^f$ and $I_v=(0,1)$ for $v\in V_K^\infty$.

\begin{Lemma}\label{lem:balls}
There exist $d_v\in (0,1]$ such that $d_v=1$ for almost all $v$ and
$$
m_v(\mathcal{O}_v(\epsilon))\ge d_v \epsilon^{r_v\dim ({\rm G})}
$$
for all $\epsilon\in I_v$.
\end{Lemma}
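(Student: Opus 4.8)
The plan is to reduce the statement to a local computation at each place, exploiting that for all but finitely many $v$ the group is unramified with $U_v = {\rm G}(O_v)$ hyperspecial, so that the local geometry is essentially that of the special fiber. First I would fix a nonzero left-invariant top-degree $K$-form $\omega$ on ${\rm G}$ realizing the Tamagawa measure, giving local densities $m_v$ on $G_v \subset \GL_n(K_v)$. The quantity $m_v(\mathcal{O}_v(\epsilon))$ is the volume, with respect to $m_v$, of the set of $g \in G_v$ with $\|g - e\|_v \le \epsilon$; since ${\rm G}$ is a smooth subvariety of $\Mat_n \cong \mathbb{A}^{n^2}$ of dimension $\dim({\rm G})$, near the identity $e$ the variety ${\rm G}(K_v)$ is an analytic submanifold over $K_v$ of that dimension, and the implicit function theorem provides an analytic chart $\phi_v$ from a ball in $K_v^{\dim({\rm G})}$ onto a neighborhood of $e$ in $G_v$ whose differential at $0$ is an isomorphism.

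The key step is to quantify the distortion of this chart. Choosing coordinates coming from $\dim({\rm G})$ of the matrix entries that form a local system of parameters at $e$ (possible since the Jacobian criterion holds at $e$), the chart $\phi_v$ and its inverse are given by convergent power series with coefficients in $K$ (depending only on the defining equations of ${\rm G}$, not on $v$), so for $v$ outside a fixed finite set $S_0$ these coefficients lie in $O_v$ and the chart is measure-preserving up to a unit: $\phi_v$ maps the unit polydisc $O_v^{\dim({\rm G})}$ bijectively onto $U_v = {\rm G}(O_v)$, scales sub-polydiscs of radius $\epsilon = q_v^{-n}$ correctly, and carries Haar measure on $K_v^{\dim({\rm G})}$ to $m_v$ exactly once the form $\omega$ is $O_v$-integral and nondegenerate mod $v$ (which again excludes only finitely many $v$). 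For such $v$ one then gets $m_v(\mathcal{O}_v(\epsilon)) \ge \epsilon^{r_v \dim({\rm G})}$ on the nose, so $d_v = 1$. For the finitely many exceptional $v \in S_0$ (including all Archimedean places), the chart is still an analytic diffeomorphism near $e$, so $\|d\phi_v\|$ and $\|d\phi_v^{-1}\|$ are bounded on a fixed small ball; this gives a comparison constant $d_v \in (0,1]$ for all $\epsilon \in I_v$ small enough, and by shrinking if necessary one arranges the bound to hold for every $\epsilon \in I_v$ (absorbing the finitely many large-$\epsilon$ cases into $d_v$).

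The main obstacle I anticipate is making the "uniform in $v$" claim precise: one must check that a single finite set $S_0$ suffices — i.e. that the local parameters at $e$, the integrality and nondegeneracy mod $v$ of the chosen form $\omega$, and the hyperspecial identification $U_v = {\rm G}(O_v)$ all hold simultaneously for all but finitely many $v$. This is exactly the kind of spreading-out statement guaranteed by choosing a model of ${\rm G}$ over $O_K[1/N]$ for suitable $N$, and it is consistent with the facts already recalled in Section~\ref{sec:semisimple_and_spherical}, namely that $U_v = {\rm G}(O_v)$ is hyperspecial for almost all $v$ and that ${\rm G}$ is unramified over $K_v$ for almost all $v$. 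Once that is in place, the inequality with $d_v = 1$ for almost all $v$ is immediate, and the finitely many remaining places are handled by the elementary bi-Lipschitz estimate for analytic charts. I would also record that $\mathcal{O}_v(1) = {\rm G}(O_v) = U_v$ for $v \notin S_0$, which both anchors the normalization $m_v(U_v) = 1$ and matches the footnote's observation that $\mathcal{O}_v(1) = O_v$ in the scalar case.
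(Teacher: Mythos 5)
Your proposal is correct and follows essentially the same route as the paper: expand the Tamagawa local density in analytic coordinates at the identity, identify $\mathcal{O}_v(q_v^{-n})$ with the congruence subgroup ${\rm G}^{(n)}(O_v)$ for finite $v$, and invoke a spreading-out argument (which the paper outsources to the computation in Voskresenski, \S 14.1) to get $m_v({\rm G}^{(n)}(O_v))=q_v^{-n\dim({\rm G})}$ for almost all $v$ and $n\ge 1$, with the finitely many remaining places handled by a local Jacobian bound. One small imprecision: the chart near $e$ does not map the unit polydisc onto all of ${\rm G}(O_v)$ (which contains lifts of ${\rm G}(k_v)$ far from $e$) but rather $\mathfrak p_v O_v^{\dim({\rm G})}$ onto the principal congruence subgroup ${\rm G}^{(1)}(O_v)$; this is harmless since $I_v$ excludes $\epsilon=1$ at finite places, but the sentence should be corrected.
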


\begin{proof}
The measures $m_v$ in local coordinate are given by
$$
|h(x)|_v (dx_1\wedge\cdots \wedge dx_d)_v,
$$
where $h(x)$ is a convergent power series and $d=\dim({\rm G})$. 
This implies the estimate $m_v(\mathcal{O}_v(\epsilon))\ge d_v \epsilon^{r_v\dim ({\rm G})}$
with some $d_v\in (0,1]$. We note that for $v\in V_K^f$, the neighbourhoods 
$\mathcal{O}_v(\epsilon)$ with $\epsilon=q_v^{-n}$ are precisely the congruence
subgroups 
$$
{\rm G}^{(n)}(O_v)=\{g\in {\rm G}(O_v):\, g=e\mod \mathfrak{p}_v^n\},
$$
where $\mathfrak{p}_v$ denotes the maximal ideal of $O_v$ and $q_v$ denotes the cardinality
of the residue field. The computation in \cite[\S14.1]{vosk} shows that for
almost all $v$ and $n\ge 1$,
$$
m_v({\rm G}^{(n)}(O_v))=q_v^{-nd}.
$$ 
This implies the claim.
\end{proof}

The measure $m$ on $G_{V_K}$ defines the probability  Haar measure on the factor space
$\Upsilon=G_{V_K}/{\rm G}(K)$,
which we denote by $\mu$. The measure $\mu$ is defined so that for measurable subsets $B\subset
G_{V_K}$ that project injectively  on $\Upsilon$,
one has 
$$
\mu(B{\rm G}(K))=\frac{1}{m(G_{V_K}/{\rm G}(K))}m(B).
$$

Now we establish a series of results that provide a connection between dynamics on the space
$\Upsilon$ and Diophantine approximation. The first two results deal with group varieties
and the last two results with general homogeneous varieties.

{\it Convention:} We shall use the product decompositions of the adele group defined by  any subset $Q\subset V_K$ 
$$
G_{V_K}=G_{V_K\backslash Q}\times G_Q\,.
$$
In order to simplify notation we identify a subset $B\subset G_{V_K\backslash Q}$
with the subset $B\times \{e\}$ of $G_{V_K}$.

In Propositions \ref{p:dual} and \ref{p:dual2} below, all implicit constants may depend on 
the set of places $S$ and the bounded subset $\Omega$,
but are independent of other parameters unless stated otherwise.

\begin{Prop}\label{p:dual}
Fix $S\subset V_K$ and a bounded subset $\Omega$ of $G_S$.
Then there exists a family of measurable subsets
$\Phi_\epsilon$ of $\Upsilon$ indexed by $\epsilon=(\epsilon_v)_{v\in S'}$,
where $S'$ is a finite subset of $S$ and $\epsilon_v\in I_v$,
that satisfies
\begin{equation}\label{eq:llow}
\mu(\Phi_\epsilon)\gg \prod_{v\in S'} \epsilon_v^{r_v\dim ({\rm G})}
\end{equation}
and the following property holds:

if for a subset $B\subset G_{V_K\backslash S}$,
$\epsilon=(\epsilon_v)_{v\in S'}$ as above, $x\in \Omega$ and $\varsigma=(e,x^{-1}){\rm G}(K)\in \Upsilon$,
we have 
\begin{equation}\label{eq:inter}
B^{-1}\varsigma\cap \Phi_\epsilon\ne \emptyset,
\end{equation}
then there exists $z\in {\rm G}(K)$ such that
\begin{equation}\label{eq:cl1}
\height(z)\ll \max_{b\in B} \height(b)
\end{equation}
and
\begin{align}\label{eq:cl2}
&\|x_v-z\|_v\le \epsilon_v \quad\quad\hbox{for all $v\in S'$,}\\
&\|x_v-z\|_v\le 1 \quad\quad\hbox{for all $v\in S\backslash S'$.}\nonumber
\end{align}
\end{Prop}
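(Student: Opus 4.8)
I would realise $\Phi_\epsilon$ as the image in $\Upsilon$ of an explicit bounded neighbourhood of the identity in $G_{V_K}$, built so that the dynamical hypothesis \eqref{eq:inter}, once read off in the coordinates $G_{V_K}=G_{V_K\backslash S}\times G_S$, \emph{is} the system of inequalities \eqref{eq:cl1}--\eqref{eq:cl2}. For $v\in V_K$ put $C_v:=\sup_{x\in\Omega}c_v(x_v)$, with $c_v$ as in \eqref{eq:metric}; since $\Omega$ is bounded this is finite, equals $1$ for all but finitely many $v$, and for non-Archimedean $v$ may be taken to be a power of $q_v$, so that $\epsilon_v/C_v\in I_v$ whenever $\epsilon_v\in I_v$. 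Fix a bounded symmetric identity neighbourhood $V_0=R\times V_0^S$ of $G_{V_K}=G_{V_K\backslash S}\times G_S$ small enough that $V_0V_0\cap{\rm G}(K)=\{e\}$; this exists because ${\rm G}(K)$ is discrete in $G_{V_K}$. For $v\in S'$ put $N_v:=\mathcal{O}_v(\epsilon_v/C_v)\cap V_{0,v}$ and for $v\in S\backslash S'$ put $N_v:=\mathcal{O}_v(1/C_v)\cap V_{0,v}$, so that $N_v={\rm G}(O_v)$ off a fixed finite set of places, and set
\[
\widetilde{\Phi}_\epsilon:=R\times\prod_{v\in S}N_v\ \subset\ V_0,\qquad \Phi_\epsilon:=\widetilde{\Phi}_\epsilon\,{\rm G}(K)/{\rm G}(K)\,.
\]

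Since $\widetilde{\Phi}_\epsilon\subset V_0$, the projection $G_{V_K}\to\Upsilon$ restricts to a bijection on it, so $\mu(\Phi_\epsilon)=m(\widetilde{\Phi}_\epsilon)/m(G_{V_K}/{\rm G}(K))$ is a product of local volumes. For $v\in S'$, Lemma \ref{lem:balls} gives $m_v(N_v)\gg_\Omega\epsilon_v^{r_v\dim({\rm G})}$, with implied constant $1$ for almost all $v$ (for $\epsilon_v$ comparable to $1$ one instead uses that $N_v$ contains a fixed identity neighbourhood and $\epsilon_v$ is bounded). The remaining factors $m_{V_K\backslash S}(R)$ and $m_v(N_v)$, $v\in S\backslash S'$, are strictly positive, and since $N_v={\rm G}(O_v)$ off a fixed finite set and the Tamagawa normalisation makes $\prod_v m_v({\rm G}(O_v))$ converge to a positive value, their product is $\gg_{S,\Omega}1$ uniformly over the finite set $S'$. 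Multiplying the local volumes yields \eqref{eq:llow}.

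Now suppose $b^{-1}\varsigma\in\Phi_\epsilon$ for some $b\in B$, with $\varsigma=(e,x^{-1}){\rm G}(K)$. Then $(b^{-1},x^{-1})=w\,y$ for some $w\in\widetilde{\Phi}_\epsilon$ and $y\in{\rm G}(K)$ (diagonally embedded), so that, writing $z:=y^{-1}\in{\rm G}(K)$,
\[
w=(b^{-1}z,\ x^{-1}z)\ \in\ R\times\prod_{v\in S}N_v\,.
\]
The $G_{V_K\backslash S}$-component gives $b^{-1}z\in R$, hence $z=b\,(b^{-1}z)$ with $b^{-1}z\in R$ bounded, so $\height(z)\ll_{\Omega}\height(b)\le\max_{b'\in B}\height(b')$ by \eqref{l:height}, which is \eqref{eq:cl1}. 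For $v\in S'$ the $v$-component gives $\|x_v^{-1}z-e\|_v\le\epsilon_v/C_v$, and writing $x_v-z=-x_v(x_v^{-1}z-e)$ and invoking \eqref{eq:metric} yields $\|x_v-z\|_v\le c_v(x_v)\,\epsilon_v/C_v\le\epsilon_v$; the identical computation for $v\in S\backslash S'$, using $N_v\subset\mathcal{O}_v(1/C_v)$, gives $\|x_v-z\|_v\le 1$. Together these are \eqref{eq:cl2}.

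The two points that need care are the following. First, the product of the local volumes over the \emph{infinitely many} places outside $S'$ must be bounded below away from zero, uniformly in the finite set $S'$; this is exactly where the convergence of the Tamagawa product $\prod_v m_v({\rm G}(O_v))$ enters, and it is the reason one insists that $\prod_{v\in V_K^f}m_v({\rm G}(O_v))$ converge rather than be defined through convergence factors (which, as the remark after the definition of $m$ notes, would cost uniformity in $S'$). Second, the radii must be scaled by $C_v$ so that one lands on the sharp bound $\|x_v-z\|_v\le\epsilon_v$ rather than $\le\mathrm{const}\cdot\epsilon_v$, while the scaled radii $\epsilon_v/C_v$ remain inside the admissible set $I_v$. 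The translation carried out in the third paragraph is the conceptual heart of the duality principle, but it amounts to no more than reading a single equation in $G_{V_K}$ component by component.
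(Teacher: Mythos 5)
Your proof is correct and follows essentially the same route as the paper's: you build $\widetilde{\Phi}_\epsilon$ as a product of shrunken local neighbourhoods (with radii rescaled by $\Omega$-dependent constants), use discreteness of ${\rm G}(K)$ to push volumes down to $\Upsilon$, invoke Lemma \ref{lem:balls} together with convergence of the Tamagawa product to get the lower volume bound uniformly in $S'$, and then read off the defining equation $(b^{-1}z,x^{-1}z)\in\widetilde{\Phi}_\epsilon$ place by place to get the height bound and the approximation inequalities. The only cosmetic difference is that you cut by a fixed injectivity neighbourhood $V_0$ from the outset, whereas the paper first proves injectivity for $\epsilon_v<\epsilon_0$ at the Archimedean places and then handles general $\epsilon_v\in(0,1)$ by a rescaling step; your "comparable to $1$" remark plays the same role.
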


\begin{proof}
Since $\Omega$ is a bounded subset of $G_S$, there exists  finite $R\subset  S$ such that
$$
\Omega\subset G_{R}\times \prod_{v\in S\backslash R} {\rm G}(O_v).
$$
We fix constants $c_v\ge 1$, $v\in S$, such that $c_v\ge \sup_{g\in\Omega} c_v(g_v^{-1})$,
where $c_v(\cdot)$ is the constant given by \eqref{eq:metric}.
We can take $c_v=1$ for $v\in S\backslash R$.

We set $\epsilon_v=1$ for $v\in S\backslash S'$ and
$\delta_v=\epsilon_v/c_v$ for $v\in S$, $\delta_v=1$ for $v\in V_K\backslash S$.
Let 
$$
\tilde \Phi_\epsilon=\prod_{v\in V_K} \mathcal{O}_v(\delta_v)\subset G_{V_K} \quad\hbox{and}\quad
\Phi_\epsilon=\tilde \Phi_\epsilon {\rm G}(K)\subset \Upsilon.
$$
Since $\delta_v\le 1$ for all $v\in V_K^f$, we have
$\mathcal{O}_v(\delta_v)\subset {\rm G}(O_v)$ for every $v\in V_K^f$.
Therefore, if $z\in {\rm G}(K)$ satisfies
$$
\tilde \Phi_\epsilon z\cap \tilde \Phi_\epsilon\ne \emptyset,
$$
then $z\in {\rm G}(O)$, where $O$ denotes the ring of integers of $K$.
Since the image of the diagonal embedding of $O$ in $\prod_{v\in V_K^\infty} K_v$
is discrete,  there exists $\epsilon_0>0$ (depending only on the number field $K$) such that
$$
\mathcal{O}_{V_K^\infty}(\epsilon_0)z\cap
\mathcal{O}_{V_K^\infty}(\epsilon_0)=\emptyset\quad
\hbox{for every $z\in {\rm G}(O)$, $z\ne e$.}
$$
Then when $\epsilon_v\in (0,\epsilon_0)$ for all $v\in V_K^\infty$,  we have
\begin{equation}\label{eq:z12}
\tilde \Phi_\epsilon z\cap \tilde \Phi_\epsilon =\emptyset\quad
\hbox{for every $z\in {\rm G}(K)$, $z\ne e$.}
\end{equation}
This implies that for such $\epsilon$,
\begin{align*}
\mu(\Phi_\epsilon)&\gg m(\tilde \Phi_\epsilon)=\prod_{v\in V_K} m_v(\mathcal{O}_v(\delta_v))\\
&=\left(\prod_{v\in S'\cup R} m_v(\mathcal{O}_v(\epsilon_v/c_v))\right) \left(\prod_{v\in V_K\backslash S'} m_v(\mathcal{O}_v(1))\right).
\end{align*}
Since the product $\prod_{v\in V_K^f} m_v({\rm G}(O_v))$ converges, we deduce 
using Lemma \ref{lem:balls} that
\begin{align*}
\mu(\Phi_\epsilon)\gg \prod_{v\in S'\cup R} m_v(\mathcal{O}_v(\epsilon_v/c_v)) \gg \prod_{v\in S'} d_v(\epsilon_v/c_v)^{r_v\dim ({\rm G})}
\gg \prod_{v\in S'} \epsilon_v^{r_v\dim ({\rm G})}.
\end{align*}
Since $c_v=1$ for $v\notin R$, the implied constants here are independent of $S'$ (and
depend only on $\Omega$).  This proves estimate (\ref{eq:llow}) provided
that $\epsilon_v \in (0,\epsilon_0)$ for all $v\in V_K^\infty$.

To prove this estimate in general, we set $\epsilon_v'=\epsilon_0\epsilon_v\le \epsilon_0$
for $v\in V_K^\infty$ and $\epsilon_v'=\epsilon_v$ for $v\in V_K^f$.
Then 
\begin{align*}
\mu(\Phi_\epsilon)&\ge \mu(\Phi_{\epsilon'})\gg
\prod_{v\in S'} (\epsilon'_v)^{r_v\dim ({\rm G})}\\
&\ge \left(\prod_{v\in S'\cap V_K^\infty} \epsilon_0^{r_v\dim ({\rm G})}\right) \prod_{v\in S'}
\epsilon_v^{r_v\dim ({\rm G})}.
\end{align*}
This completes the proof of (\ref{eq:llow}).

Now we suppose that (\ref{eq:inter}) holds. Then for some $b\in B$ and $z\in {\rm G}(K)$,
$$
(b^{-1}z,x^{-1}z)\in \tilde \Phi_\epsilon.
$$ 
Hence, for  $v\in S$, we have $x_v^{-1}z\in \mathcal{O}_v(\delta_v)$ and
$$
\|x_v-z\|_v\le c_v\, \|x_v^{-1}z-e\|_v\le c_v\delta_v.
$$
This proves (\ref{eq:cl2}).

Finally, we observe that
$$
z\in  \Omega (b, e)\tilde \Phi_\epsilon.
$$
Hence, it follows from (\ref{l:height}) that
$$
\height(z)\ll \height(b).
$$
This shows (\ref{eq:cl1}) and completes the proof of the proposition.
\end{proof}

\subsection{Duality and approximation at every points on the group variety}

In order to prove the uniform version of our main theorem (Theorem \ref{th:g1}(ii)),
condition (\ref{eq:inter}) in Proposition \ref{p:dual}
needs to be relaxed.  This is achieved by the following proposition:

\begin{Prop}\label{p:dual2}
Fix $S\subset V_K$ and a bounded subset $\Omega$ of $G_S$.
Then there exists a family of measurable subsets
$\tilde \Psi_\epsilon$ of $G_{V_K}$ indexed by $\epsilon=(\epsilon_v)_{v\in S'}$,
where $S'$ is a finite subset of $S$ and $\epsilon_v\in I_v$,
that satisfies
\begin{align}
\mu(\tilde \Psi_\epsilon {\rm G}(K))&\gg \prod_{v\in S'} \epsilon_v^{r_v\dim
  (G)}, \label{eq:llow2}\\
\mu(\tilde \Psi_\epsilon^{-1} (e,x^{-1}){\rm G}(K))&\gg \prod_{v\in S'} \epsilon_v^{r_v\dim
  (G)}\quad\hbox{for all $x\in \Omega$,} \label{eq:llow22}
\end{align}
 and the following property holds:

if for $B\subset G_{V_K\backslash S}$, $\epsilon=(\epsilon_v)_{v\in S'}$ as above, $x\in \Omega$ and
$\varsigma=(e,x^{-1}){\rm G}(K)\in \Upsilon$, we have 
\begin{equation}\label{eq:inter2}
B^{-1} \tilde \Psi^{-1}_\epsilon \varsigma\cap \tilde \Psi_\epsilon {\rm G}(K)\ne \emptyset,
\end{equation}
then there exists $z\in {\rm G}(K)$ such that
\begin{equation*}
\height(z)\ll \max_{b\in B} \height(b)
\end{equation*}
and
\begin{align}\label{eq:dioph2}
&\|x_v-z\|_v\le \epsilon_v\quad\hbox{for all $v\in S'$,}\\
&\|x_v-z\|_v\le 1\quad\hbox{for all $v\in S\backslash S'$}.\nonumber
\end{align}
\end{Prop}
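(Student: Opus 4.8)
The plan is to imitate the construction in the proof of Proposition \ref{p:dual}, but replace the single ``target'' neighbourhood $\tilde\Phi_\epsilon$ by a neighbourhood $\tilde\Psi_\epsilon$ that is ``thickened'' enough so that the weaker intersection condition \eqref{eq:inter2} still forces a rational point to land near $x$. First I would choose, as in Proposition \ref{p:dual}, a finite set $R\subset S$ with $\Omega\subset G_R\times\prod_{v\in S\setminus R}{\rm G}(O_v)$ and constants $c_v\ge 1$ bounding $\sup_{g\in\Omega}c_v(g_v^{-1})$, with $c_v=1$ for $v\notin R$. The key new idea is to split the accuracy: instead of taking $\delta_v=\epsilon_v/c_v$ in one factor, I would build $\tilde\Psi_\epsilon$ out of balls of radius roughly $\epsilon_v/(2c_v)$ (or $\epsilon_0\epsilon_v/(2c_v)$ at Archimedean places), so that a product of two such balls still has norm-displacement $\le \epsilon_v/c_v$ at each $v\in S'$ and $\le 1$ at $v\in S\setminus S'$. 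Concretely, I would set $\delta_v$ to be $\epsilon_v/(2c_v)$ for $v\in S$ (after the usual rescaling by $\epsilon_0$ at Archimedean $v$ to guarantee discreteness of ${\rm G}(O)$), $\delta_v=1$ for $v\in V_K\setminus S$, and define
$$
\tilde\Psi_\epsilon=\prod_{v\in V_K}\mathcal O_v(\delta_v)\subset G_{V_K},\qquad \Psi_\epsilon=\tilde\Psi_\epsilon{\rm G}(K)\subset\Upsilon.
$$
Since $\tilde\Psi_\epsilon$ is a neighbourhood of the identity contained in each $\mathcal O_v(\delta_v)\subset{\rm G}(O_v)$ for $v\in V_K^f$, exactly the same argument as in Proposition \ref{p:dual} (injectivity of the projection to $\Upsilon$ once $\epsilon_v<\epsilon_0$ at Archimedean places, plus convergence of $\prod_{v\in V_K^f}m_v({\rm G}(O_v))$ and Lemma \ref{lem:balls}) gives the volume lower bound \eqref{eq:llow2}. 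For \eqref{eq:llow22}, note that $\tilde\Psi_\epsilon^{-1}(e,x^{-1}){\rm G}(K)$ is a left translate by the fixed element $(e,x^{-1})$ of the image of $\tilde\Psi_\epsilon^{-1}$; since $\tilde\Psi_\epsilon$ is a symmetric-up-to-bounded-distortion neighbourhood (one can take each $\mathcal O_v(\delta_v)$ symmetric, as $\|g-e\|_v=\|g^{-1}-e\|_v\cdot(1+O(\|g-e\|_v))$ near $e$) and translation by $(e,x^{-1})$ with $x\in\Omega$ distorts the Haar measure on $\Upsilon$ by a bounded factor depending only on $\Omega$, the same lower bound holds; alternatively one simply uses $\mu(g\cdot A)=\mu(A)$ for the left action and controls the distortion from passing between $\tilde\Psi_\epsilon$ and $\tilde\Psi_\epsilon^{-1}$ via \eqref{eq:metric}.

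Next I would unwind the intersection condition \eqref{eq:inter2}. If $B^{-1}\tilde\Psi_\epsilon^{-1}\varsigma\cap\tilde\Psi_\epsilon{\rm G}(K)\ne\emptyset$, then there are $b\in B$, $\psi_1,\psi_2\in\tilde\Psi_\epsilon$, and $z\in{\rm G}(K)$ with
$$
b^{-1}\psi_1^{-1}(e,x^{-1})=\psi_2 z \quad\text{in }G_{V_K},
$$
reading the two coordinates (the $G_{V_K\setminus S}$-coordinate and the $G_S$-coordinate) separately — here I am using the product decomposition $G_{V_K}=G_{V_K\setminus S}\times G_S$ and the convention that $B\subset G_{V_K\setminus S}$, $\varsigma$ has $G_S$-coordinate $x^{-1}$. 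In the $G_S$-coordinate this reads $(\psi_1^{-1})_S\,x^{-1}=(\psi_2)_S\,z_S$, i.e. $x_v^{-1}z=(\psi_1)_v(\psi_2)_v^{-1}\cdot(\text{correction})$, so $\|x_v^{-1}z-e\|_v\le$ (up to the usual submultiplicativity of $\|\cdot\|_v$ and the bound $c_v$) a constant times $\delta_v+\delta_v$, which with the choice $\delta_v=\epsilon_v/(2c_v)$ gives $\|x_v^{-1}z-e\|_v\le\epsilon_v/c_v$, hence $\|x_v-z\|_v\le c_v\|x_v^{-1}z-e\|_v\le\epsilon_v$ for $v\in S'$, and $\le 1$ for $v\in S\setminus S'$ (where $\epsilon_v=1$). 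For the integrality/height bound I would argue as in Proposition \ref{p:dual}: the relation above shows $z\in\Omega\,(b,e)\,\tilde\Psi_\epsilon\cdot\tilde\Psi_\epsilon$ (a product of a fixed bounded set, $(b,e)$, and two copies of the identity-neighbourhood $\tilde\Psi_\epsilon$, which is itself bounded), and then \eqref{l:height} yields $\height(z)\ll\height(b)\le\max_{b\in B}\height(b)$.

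The main obstacle is getting the bookkeeping on the distortion constants right: the factor of two (or more) lost by composing two neighbourhoods $\tilde\Psi_\epsilon\cdot\tilde\Psi_\epsilon^{-1}$, together with the non-exact submultiplicativity $\|gh-e\|_v\le\|g-e\|_v\|h\|_v+\|h-e\|_v\le (1+\|g-e\|_v)\|h-e\|_v+\|g-e\|_v$ at Archimedean places, must be absorbed into the constants $c_v$ — and crucially one must check that $c_v$ can still be taken to equal $1$ for all but finitely many $v$ (those in $R$), so that the volume lower bounds \eqref{eq:llow2}--\eqref{eq:llow22} remain uniform over the finite set $S'$, exactly as emphasized after \eqref{eq:llow} in the previous proof. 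For non-Archimedean $v$ the ultrametric inequality makes all of this exact, so the only genuine care is needed at the (finitely many) Archimedean places, where the losses are harmless constants. Once these constants are fixed at the outset (shrinking $\delta_v$ by the appropriate fixed factor), the argument runs parallel to Proposition \ref{p:dual}, and \eqref{eq:z12}-type injectivity, the volume estimate, and the height estimate all go through verbatim.
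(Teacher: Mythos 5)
Your overall strategy is the same as the paper's: build $\tilde\Psi_\epsilon$ as a product of balls $\mathcal O_v(\delta_v)$ with $\delta_v$ shrunk by an extra factor beyond Proposition~\ref{p:dual}, unwind the intersection condition coordinate-by-coordinate via the decomposition $G_{V_K}=G_{V_K\setminus S}\times G_S$, and read off the height bound from the containment of $z$ in a bounded product. The volume estimate \eqref{eq:llow2}, the unwinding of \eqref{eq:inter2}, and the height bound go through essentially as you describe (modulo an Archimedean matrix-multiplication constant of order $n$ rather than your factor $2$, which is harmless).

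The genuine gap is in your treatment of \eqref{eq:llow22}. First, $\tilde\Psi_\epsilon^{-1}(e,x^{-1}){\rm G}(K)$ is \emph{not} the left translate of $\tilde\Psi_\epsilon^{-1}{\rm G}(K)$ by $(e,x^{-1})$ — that translate would be $(e,x^{-1})\tilde\Psi_\epsilon^{-1}{\rm G}(K)$, a different set — and left translation does not ``distort the Haar measure by a bounded factor,'' it preserves $\mu$ exactly, so this line of reasoning is off the mark in both directions. The substantive point it misses is this: \eqref{eq:llow2} was proved by showing that $\tilde\Psi_\epsilon$ projects injectively to $\Upsilon$, and the set relevant to \eqref{eq:llow22} is $\tilde\Psi_\epsilon^{-1}(e,x^{-1})$, whose injectivity on $\Upsilon$ is not automatic. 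The paper resolves this by left-invariance, $\mu(\tilde\Psi_\epsilon^{-1}(e,x^{-1}){\rm G}(K))=\mu\bigl((e,x)\tilde\Psi_\epsilon^{-1}(e,x^{-1}){\rm G}(K)\bigr)$, reducing to a \emph{conjugate} of $\tilde\Psi_\epsilon^{-1}$, which is again a small neighbourhood of the identity whose size is controlled by a conjugation bound \eqref{eq:norm110} with constant $c_v'(g)$. Crucially, the paper builds this conjugation constant into the definition of $\delta_v$ (it takes $\delta_v=\epsilon_v/(c_vc_v')$, resp.\ $\epsilon_v/((2n+1)c_vc_v')$), precisely so that the conjugated set is both small enough to project injectively and large enough to contain $\prod_v\mathcal O_v(\delta_v/c_v')^{-1}$, from which \eqref{eq:llow22} follows. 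Your $\delta_v=\epsilon_v/(2c_v)$ uses only the left-action bound \eqref{eq:metric} and never introduces $c_v'$; without it the injectivity of the conjugated set, and hence \eqref{eq:llow22}, is not established. To repair the proof, introduce the conjugation bound and constant $c_v'$ explicitly, fold it into $\delta_v$, and rephrase the argument for \eqref{eq:llow22} as a left-translation to a conjugate followed by the injectivity/volume estimate of Lemma~\ref{lem:balls}.
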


\begin{proof}
Similarly to (\ref{eq:metric}), for every $g\in \hbox{GL}_n(K_v)$ and $y\in\hbox{M}_n(K_v)$,
\begin{equation}\label{eq:norm110}
\|g\cdot y \cdot g^{-1}\|_v\le c_v'(g)\|y\|_v
\end{equation}
where $c_v'(g)\ge 1$, $c_v'(g)=1$ for $g\in \hbox{GL}_n(O_v)$, and $c_v'(g)$ is 
uniformly bounded over bounded subsets. We set $c_v'=\sup_{g\in\Omega} c_v'(g)$.

Let $c_v\ge 1$ be defined as in  Proposition \ref{p:dual}
and $\epsilon_v=1$ for $v\in S\backslash S'$.
We set $\delta_v=1$ for $v\in V_K\backslash S$,
$\delta_v=\epsilon_v/((2n+1)c_vc_v')$ for Archimedean $v\in S$, and
$\delta_v=\epsilon_v/(c_vc_v')$ for non-Archimedean $v\in S$.
Note $\delta_v=1$ for almost all $v$.
Let
$$
\tilde \Psi_\epsilon=\prod_{v\in V_K} \mathcal{O}_v(\delta_v).
$$
Since $\delta_v=1$ for all non-Archimedean $v\notin S'\cup R$,
estimate (\ref{eq:llow2}) can be established by the same argument
as in the proof of  Proposition \ref{p:dual}. To prove (\ref{eq:llow22}), we observe that
$$
\mu(\tilde \Psi_\epsilon^{-1} (e,x^{-1}){\rm G}(K))=\mu((e,x)\tilde \Psi_\epsilon^{-1} (e,x^{-1}){\rm G}(K)),
$$
and by (\ref{eq:norm110}),  
$$
(e,x)\tilde \Psi_\epsilon^{-1} (e,x^{-1})\supset \prod_{v\in V_K} \mathcal{O}_v(\delta_v/c_v')^{-1}.
$$
Hence, estimate (\ref{eq:llow22}) can be proved similarly to (\ref{eq:llow2}).

Suppose that (\ref{eq:inter2}) holds. Then for some $b\in B$, $f'\in \prod_{v\in V_K\backslash S}
\mathcal{O}_v(\delta_v)$, $f\in \prod_{v\in S}
\mathcal{O}_v(\delta_v)$, and $z\in {\rm G}(K)$, we have
$$
(b^{-1}(f')^{-1}z,f^{-1}x^{-1}z)\in \tilde \Psi_\epsilon.
$$ 
For $v\in S\cap V_K^\infty$, we have $f_v^{-1}x_v^{-1}z\in \mathcal{O}_v(\delta_v)$ and 
$f_v\in \mathcal{O}_v(\delta_v)$. We observe that $\|f_v\|_v\le 2$, and 
for every $a\in \hbox{M}_n(K_v)$,
$$
\|f_va\|_v\le n\|f_v\|_v\|a\|_v\le (2n)\|a\|_v.
$$
Hence,
\begin{align*}
\|x_v-z\|_v&\le \|x_v-x_vf_v\|_v+\|x_vf_v-z\|_v\\
&\le c_v\,(\|e-f_v\|_v+ (2n)\|e-f_v^{-1}x_v^{-1}z\|_v)\le (2n+1)c_v\delta_v.
\end{align*}

The case $v\in S\cap V_K^f$ is treated similarly. We observe that $\|f_v\|_v\le 1$, and 
for every $a\in \hbox{M}_n(K_v)$,
$$
\|f_va\|_v\le \|f_v\|_v\|a\|_v\le \|a\|_v,
$$
so that
\begin{align*}
\|x_v-z\|_v&\le \max\{\|x_v-x_vf_v\|_v,\|x_vf_v-z\|_v\}\\
&\le c_v\,\max\{\|e-f_v\|_v,\|e-f_v^{-1}x_v^{-1}z\|_v\}\le c_v\delta_v.
\end{align*}
This  proves (\ref{eq:dioph2}).

Finally, we observe that
$$
z\in  \Omega(f',f)(b, e)\tilde \Psi_\epsilon.
$$
Hence, it follows from (\ref{l:height}) that
$$
\height(z)\ll \height(b).
$$
This completes the proof of the proposition.
\end{proof}

\subsection{Duality and approximation on homogeneous varieties}
Consider now  the problem of Diophantine approximation on general homogeneous varieties.
The results obtained in the present subsection will be used in the proof of Theorem \ref{th:g3}.

In Propositions \ref{p:dual1_prime} and \ref{p:dual2_prime} below,
all implicit constants may depend on $S'\subset S\subset V_K$, $x^0\in X_{S'}$, and $\Omega\subset G_{S'}$,
but are independent of other parameters unless stated otherwise.

\begin{Prop}\label{p:dual1_prime}
Let ${\rm X}\subset \mathbb{A}^n$ be a homogeneous quasi-affine variety of the group ${\rm G}\subset {\rm
  GL}_n$.
Fix $S\subset V_K$, finite $S'\subset S$, $x^0\in X_{S'}$, and a bounded subset $\Omega$ of $G_{S'}$.
Then there exist $\epsilon_0\in (0,1)$ and
 a family of measurable subset $\Phi_\epsilon$ of $\Upsilon$ indexed by
$\epsilon=(\epsilon_v)_{v\in S'}$ where $\epsilon_v\in I_v\cap (0,\epsilon_0)$ that satisfy
\begin{equation}\label{eq:low1}
\mu(\Phi_\epsilon)\gg \prod_{v\in S'} \epsilon_v^{r_v \dim ({\rm X})}
\end{equation}
and the following property holds:

if for $B\subset G_{V_K\backslash S}\times \prod_{v\in V_K^f\cap (S\backslash S')} {\rm G}(O_v)$,
$\epsilon=(\epsilon_v)_{v\in S'}$ as above, and
$\varsigma=(e,g^{-1}){\rm G}(K)\in \Upsilon$ with $g\in \Omega$,  we have
\begin{equation}\label{eq:low4}
B^{-1}\varsigma\cap \Phi_\epsilon\ne \emptyset,
\end{equation}
then there exists $z\in {\rm G}(O_{(V_K\backslash S)\cup S'})$ such that
\begin{equation}\label{eq:low2}
\height(z)\ll \max_{b\in B} \height(b)
\end{equation}
and for $x=gx^0\in X_{S'}$
\begin{equation}\label{eq:low3}
\|x_v-zx^0_v\|\le \epsilon_v\quad\hbox{for all $v\in S'$.}
\end{equation}
\end{Prop}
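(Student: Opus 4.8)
The plan is to follow the proof of Proposition~\ref{p:dual} closely; the one essential new ingredient is that the auxiliary subset of $G_{V_K}$ whose image in $\Upsilon$ is $\Phi_\epsilon$ must be \emph{thickened along the fibres of the orbit map}, because the target exponent in \eqref{eq:low1} is $\dim({\rm X})$ rather than $\dim({\rm G})$. First I would record the local geometry of the orbit map: since ${\rm X}$ is a homogeneous ${\rm G}$-variety it is smooth, and for each $v$ the orbit $G_v\cdot x^0_v$ is open in ${\rm X}(K_v)$ and the map $\pi_v\colon G_v\to{\rm X}(K_v)$, $g\mapsto g\cdot x^0_v$, is a submersion of $K_v$-analytic manifolds onto it, with fibres the left cosets of $H_v:=\operatorname{Stab}_{G_v}(x^0_v)$, of dimension $\dim({\rm G})-\dim({\rm X})$. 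Fix a finite $R\supset S'$ with $\Omega\subset G_R\times\prod_{v\notin R}{\rm G}(O_v)$, and for each $v\in S'$ choose a bounded neighbourhood $W_v$ of $e$ in $G_v$, contained in ${\rm G}(O_v)$ when $v$ is non-Archimedean (with $W_v={\rm G}(O_v)$ for $v\notin R$), together with a constant $c_v\ge1$ bounding the Lipschitz constant of $y\mapsto g_v\cdot y$ on a neighbourhood of $x^0_v$, uniformly over $g\in\Omega$ (so $c_v=1$ for $v\notin R$). For $\eta\in I_v$ put
\[
\Theta_v(\eta):=\pi_v^{-1}\bigl(\{x\in{\rm X}(K_v):\|x-x^0_v\|_v\le\eta\}\bigr)\cap W_v\subset G_v ,
\]
and, for $\epsilon_v\in I_v\cap(0,\epsilon_0)$ with $\epsilon_0$ fixed below, set
\[
\tilde\Phi_\epsilon:=\Bigl(\prod_{v\in S'}\Theta_v(\epsilon_v/c_v)\Bigr)\times\Bigl(\prod_{v\in V_K^f\setminus S'}{\rm G}(O_v)\Bigr)\times\Bigl(\prod_{v\in V_K^\infty\setminus S'}\mathcal{O}_v(1)\Bigr)\subset G_{V_K},\qquad \Phi_\epsilon:=\tilde\Phi_\epsilon\,{\rm G}(K).
\]
Using the submersion normal form for $\pi_v$ near $x^0_v$ (Fubini over the $H_v$-direction, whose contribution is a fixed positive constant) one gets $m_v(\Theta_v(\eta))\gg\eta^{r_v\dim({\rm X})}$; for almost all $v$ this reduces, after spreading everything out over $O_v$, to a congruence computation in the spirit of \cite[\S14.1]{vosk} giving $m_v(\Theta_v(q_v^{-n}))=q_v^{-n\dim({\rm X})}$, the analogue of Lemma~\ref{lem:balls} for a $G$-orbit in place of the group.

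For the measure bound \eqref{eq:low1}: since $\Theta_v(\eta)\subset W_v\subset{\rm G}(O_v)$ at non-Archimedean $v$ and the Archimedean factors are bounded, $\tilde\Phi_\epsilon$ lies in $\prod_{v\in V_K^f}{\rm G}(O_v)$ times a bounded set, so exactly as in Proposition~\ref{p:dual} the relation $\tilde\Phi_\epsilon z\cap\tilde\Phi_\epsilon\ne\emptyset$ with $z\in{\rm G}(K)$ forces $z\in{\rm G}(O)$, and the discreteness of ${\rm G}(O)$ in $G_{V_K^\infty}$ makes $\tilde\Phi_\epsilon$ inject into $\Upsilon$ once $\epsilon_0$ is small enough (small enough also for the submersion normal form to be valid at each of the finitely many $x^0_v$). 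Hence $\mu(\Phi_\epsilon)\gg m(\tilde\Phi_\epsilon)$, and since $\prod_{v\in V_K^f}m_v({\rm G}(O_v))$ converges, $c_v=1$ for $v\notin R$, and $m_v(\Theta_v(\epsilon_v/c_v))\gg(\epsilon_v/c_v)^{r_v\dim({\rm X})}$, this yields $\mu(\Phi_\epsilon)\gg\prod_{v\in S'}\epsilon_v^{r_v\dim({\rm X})}$.

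For the approximation statement: if \eqref{eq:low4} holds, unwinding it produces $b\in B$ and $z\in{\rm G}(K)$ with $(b^{-1}z,\,g^{-1}z)\in\tilde\Phi_\epsilon$ (first block over $V_K\setminus S'$, second over $S'$). For $v\in S'$ this says $g_v^{-1}z\in\Theta_v(\epsilon_v/c_v)$, i.e.\ $(g_v^{-1}z)\cdot x^0_v$ lies within $\epsilon_v/c_v$ of $x^0_v$; applying $g_v$ and using the choice of $c_v$, together with $x=g\cdot x^0$,
\[
\|x_v-z\cdot x^0_v\|_v=\|g_v\cdot x^0_v-g_v\cdot\bigl((g_v^{-1}z)\cdot x^0_v\bigr)\|_v\le c_v\,(\epsilon_v/c_v)=\epsilon_v ,
\]
which is \eqref{eq:low3}. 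For non-Archimedean $v\in S\setminus S'$ the hypothesis on $B$ gives $b_v\in{\rm G}(O_v)$, while $b_v^{-1}z\in\tilde\Phi_\epsilon$ lies in ${\rm G}(O_v)$, so $z\in{\rm G}(O_v)$; as this holds for every such $v$, $z\in{\rm G}(O_{(V_K\setminus S)\cup S'})$. Finally $z=b\cdot\omega'$ where $b\in G_{V_K\setminus S'}$ has the entries prescribed by $B$ (integral off $V_K\setminus S$) and $\omega'$ ranges over a fixed bounded subset of $G_{V_K}$ built from $\Omega$ and $\tilde\Phi_1\supset\tilde\Phi_\epsilon$; since the height of the integral block of $b$ equals $\height(b)$, \eqref{l:height} gives $\height(z)\ll\height(b)\le\max_{b\in B}\height(b)$, which is \eqref{eq:low2}.

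The step I expect to be the main obstacle is the local measure estimate $m_v(\Theta_v(\eta))\gg\eta^{r_v\dim({\rm X})}$ with implied constant equal to $1$ for almost all $v$: this requires the submersion normal form for the orbit map uniformly in $v$, together with the explicit congruence-subgroup computation at the good places — the analogue of Lemma~\ref{lem:balls} when passing from the group to a $G$-orbit. Everything else is a routine transcription of Proposition~\ref{p:dual}, the bookkeeping of places $S'\subset S\subset V_K$ being unchanged.
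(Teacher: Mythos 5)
Your proof is essentially the same as the paper's: both thicken the auxiliary set along the stabiliser/fibre direction so that only the orbit direction shrinks with $\epsilon$, yielding the exponent $\dim({\rm X})$; the only cosmetic difference is that the paper parametrises this set as $\sigma_v(\mathcal{O}_v(x_v^0,\epsilon_v/c_v))L_v^1$ using a local analytic section $\sigma_v$ of the orbit map and a fixed compact neighbourhood $L_v^1$ of identity in the stabiliser $L_v$, whereas you take the full preimage of a ball intersected with a bounded neighbourhood $W_v$, which is comparable and gives the same local-coordinate volume estimate. One remark: the ``main obstacle'' you flag --- getting the local measure bound with implied constant $1$ for almost all $v$, i.e.\ the $G$-orbit analogue of Lemma~\ref{lem:balls} --- is not actually needed here, because by the convention stated just before the proposition all implicit constants in Propositions~\ref{p:dual1_prime} and~\ref{p:dual2_prime} may depend on the finite set $S'$ (and on $x^0$ and $\Omega$); the uniformity over places is only required in Propositions~\ref{p:dual} and~\ref{p:dual2} for the group variety, where $S'$ varies, and that is precisely why those propositions invoke Lemma~\ref{lem:balls}. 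Also be a little careful about the injectivity of $\tilde\Phi_\epsilon\to\Upsilon$: since $\Theta_v(\eta)$ contains the fixed set $L_v\cap W_v$ for all $\eta$, shrinking $\epsilon_0$ alone does not shrink the Archimedean factors; you must also choose $W_v$ (for $v\in S'\cap V_K^\infty$) and the Archimedean complements small, exactly as the paper shrinks $L_v^1$ and $\mathcal{O}_v$ --- alternatively, bounded multiplicity of the projection suffices for $\mu(\Phi_\epsilon)\gg m(\tilde\Phi_\epsilon)$.
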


\begin{proof}
For $v\in S'$, let $p_v: G_{v}\to G_{v}x_v^0$ be the map $g\mapsto g x_v^0$.
There exists a neighbourhood $\mathcal{U}_v$ of $x_v^0$ in $G_{v}x_v^0$
and an analytic section $\sigma_v:\mathcal{U}_v \to G_{v}$ of the map $p_v$ such that $\sigma_v(x_v^0)=e$.
(The section $\sigma_v$ can be constructed using the exponential map in $G_v$.)
Let
$$
\mathcal{O}_{v}(x_v^0,\epsilon):=\{y\in G_{v}x_v^0:\, \|y-x^0_v\|_v\le\epsilon\}.
$$
Let $L_v$ be the stabiliser of $x_v^0$ in $G_{v}$,
and let $L_v^1$ be a compact neighbourhood of identity in $L_v$.
Let $c_v=\sup_{g\in\Omega} c_v(g_v^{-1})$,
where $c_v(\cdot)$ is the constant given by \eqref{eq:metric}.
We set
\begin{align*}
\tilde \Phi_\epsilon= \left( \prod_{v\in V_K\backslash S'} \mathcal{O}_v\right)\times
\left(\prod_{v\in S'} \sigma_v(\mathcal{O}_{v}(x_v^0,\epsilon_v/c_v))L_v^1\right),
\end{align*}
where $\epsilon_v>0$ are sufficiently small, so that $\mathcal{O}_{v}(x_v^0,\epsilon_v/c_v)\subset
\mathcal{U}_v\cap \sigma_v^{-1}({\rm G}(O_v))$, and where $\mathcal{O}_v={\rm G}(O_v)$ for $v\in V_K^f\backslash S'$ and $\mathcal{O}_v$ is
a fixed neighbourhood of identity in $G_v$ for $v\in V_K^\infty\backslash S'$.
We also set
$$
\Phi_\epsilon=\tilde \Phi_\epsilon {\rm G}(K)\subset \Upsilon.
$$
As in the proof of Proposition \ref{p:dual}, we can choose sufficiently small
$\epsilon_v$, $L^1_v$ and $\mathcal{O}_v$, so that
$$
\tilde \Phi_\epsilon z\cap \tilde \Phi_\epsilon =\emptyset\quad\hbox{for every $z\in {\rm
    G}(K)$, $z\ne e$.}
$$
Then
$$
\mu(\Phi_\epsilon)\gg m(\tilde \Phi_\epsilon)\gg \prod_{v\in S'} m_v(\sigma_v(\mathcal{O}_{v}(x_v^0,\epsilon_v/c_v))L_v^1),
$$
and estimating volumes in local coordinates, we obtain
$$
\mu(\Phi_\epsilon) \gg \prod_{v\in S'} \epsilon_v^{r_v\dim({\rm X})}.
$$
This proves (\ref{eq:low1}).

Now suppose that (\ref{eq:low4}) holds. Then for some $b\in B$ and $z\in {\rm G}(K)$,
we have
$$
(b^{-1}z,g^{-1}z)\in \tilde \Phi_\epsilon.
$$
Then for $v\in S'$, we have $g_v^{-1}z\in \sigma_v(\mathcal{O}_{v}(x_v^0,\epsilon_v/c_v))L_v^1$, and
$$
\|g_v^{-1}z x_v^0-x_v^0\|_v\le \epsilon_v/c_v.
$$
Therefore,
$$
\|z x_v^0-x_v\|_v\le \epsilon_v\quad\hbox{for all $v\in S'$. }
$$
Also, for $v\in V_K^f\cap (S\backslash S')$, we have $z\in b_v\mathcal{O}_v$.
This implies that $z\in {\rm G}(O_{(V_K\backslash S)\cup S'})$.
Hence, we have established (\ref{eq:low3}).

Finally, we observe that since
$$
z\in \Omega(b,e)\tilde \Phi_\epsilon,
$$
estimate (\ref{eq:low2}) follows from (\ref{l:height}).
\end{proof}

We now turn to proving the estimate crucial in establishing diophantine approximation at every point 
in a homogeneous variety. 

\begin{Prop}\label{p:dual2_prime}
Let ${\rm X}\subset\mathbb{A}^n$ be a homogeneous quasi-affine variety of the group ${\rm G}\subset {\rm GL}_n$.
Fix $S\subset V_K$, finite $S'\subset S$, $x^0\in X_{S'}$, and a bounded subset $\Omega$ of $G_{S'}$.
Then there exist $\epsilon_0\in (0,1)$ and
 a family of measurable subset $\tilde\Psi_\epsilon$ of $G_{V_K}$ indexed by
$\epsilon=(\epsilon_v)_{v\in S'}$ where $\epsilon_v\in I_v\cap (0,\epsilon_0)$ that satisfy
\begin{align}
\mu(\tilde \Psi_\epsilon {\rm G}(K))&\gg \prod_{v\in S'} \epsilon_v^{r_v \dim ({\rm X})},\label{eq:low1_1}\\
\mu(\tilde \Psi^{-1}_\epsilon (e,g^{-1}){\rm G}(K))&\gg \prod_{v\in S'} \epsilon_v^{r_v \dim ({\rm X})}
\quad \hbox{for all $g\in\Omega$,}\label{eq:low1_12}
\end{align}
and the following property holds:

if for $B\subset G_{V_K\backslash S}\times \prod_{v\in V_K^f\cap (S\backslash S')} {\rm G}(O_v)$,
$\epsilon=(\epsilon_v)_{v\in S'}$ as above, and
$\varsigma=(e,g^{-1}){\rm G}(K)\in \Upsilon$ with $g\in \Omega$,  we have
\begin{equation}\label{eq:bbb}
B^{-1}\tilde \Psi^{-1}_\epsilon \varsigma\cap \tilde \Psi_\epsilon {\rm G}(K)\ne \emptyset,
\end{equation}
then there exists $z\in {\rm G}(O_{(V_K\backslash S)\cup S'})$ such that
\begin{equation}\label{eq:low2_1}
\height(z)\ll \max_{b\in B} \height(b)
\end{equation}
and for $x=gx_0\in X_{S'}$
\begin{equation}\label{eq:low3_1}
\|x_v-zx^0_v\|\le \epsilon_v\quad\hbox{for all $v\in S'$.}
\end{equation}
\end{Prop}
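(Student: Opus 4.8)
The argument combines the section construction from the proof of Proposition~\ref{p:dual1_prime} with the two-sided thickening device introduced in the proof of Proposition~\ref{p:dual2}.

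First I would reproduce the local setup of Proposition~\ref{p:dual1_prime}: for $v\in S'$ fix a neighbourhood $\mathcal{U}_v$ of $x_v^0$ in $G_vx_v^0$ and an analytic section $\sigma_v\colon\mathcal{U}_v\to G_v$ of the orbit map $p_v(g)=gx_v^0$ with $\sigma_v(x_v^0)=e$, let $L_v$ be the stabiliser of $x_v^0$ in $G_v$ and $L_v^1$ a compact neighbourhood of $e$ in $L_v$. Since $S'$ is finite, for $v\in S'$ set $c_v=\sup_{g\in\Omega}c_v(g_v^{-1})\ge 1$ with $c_v(\cdot)$ from \eqref{eq:metric} and, as in the proof of Proposition~\ref{p:dual2}, $c_v'=\sup_{g\in\Omega}c_v'(g)\ge 1$ with $c_v'(\cdot)$ from \eqref{eq:norm110}. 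Put $\delta_v=\epsilon_v/((2n+1)c_vc_v')$ for Archimedean $v\in S'$ and $\delta_v=\epsilon_v/(c_vc_v')$ for non-Archimedean $v\in S'$, and define
$$
\tilde\Psi_\epsilon=\Bigl(\prod_{v\in V_K\backslash S'}\mathcal{O}_v\Bigr)\times\Bigl(\prod_{v\in S'}\sigma_v\bigl(\mathcal{O}_v(x_v^0,\delta_v)\bigr)L_v^1\Bigr),
$$
where $\mathcal{O}_v={\rm G}(O_v)$ for $v\in V_K^f\backslash S'$ and $\mathcal{O}_v$ is a fixed small neighbourhood of $e$ for $v\in V_K^\infty\backslash S'$; as in Proposition~\ref{p:dual1_prime} we take $\epsilon_0\in(0,1)$, the sets $L_v^1$ and the $\mathcal{O}_v$ small enough that $\mathcal{O}_v(x_v^0,\delta_v)\subset\mathcal{U}_v\cap\sigma_v^{-1}({\rm G}(O_v))$ for non-Archimedean $v\in S'$ and that $\tilde\Psi_\epsilon z\cap\tilde\Psi_\epsilon=\emptyset$ for every $z\in{\rm G}(K)\setminus\{e\}$.

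Next I would establish the two volume lower bounds. Inequality \eqref{eq:low1_1} follows from the local-coordinate volume computation in the proof of Proposition~\ref{p:dual1_prime} applied verbatim, since $\delta_v$ and $\epsilon_v$ differ only by a fixed constant, so that $m_v(\sigma_v(\mathcal{O}_v(x_v^0,\delta_v))L_v^1)\gg\delta_v^{r_v\dim({\rm X})}\gg\epsilon_v^{r_v\dim({\rm X})}$. For \eqref{eq:low1_12} I would argue as in Proposition~\ref{p:dual2}: by $G_{V_K}$-invariance of $\mu$, $\mu(\tilde\Psi_\epsilon^{-1}(e,g^{-1}){\rm G}(K))=\mu((e,g)\tilde\Psi_\epsilon^{-1}(e,g^{-1}){\rm G}(K))$, and conjugation by $(e,g)$ preserves the local Haar measures while, by \eqref{eq:norm110} and $g\in\Omega$, distorting the relevant shrinking neighbourhoods of $e$ only by factors bounded uniformly over $\Omega$; after checking that this set projects injectively to $\Upsilon$ for small $\epsilon$ (by discreteness of ${\rm G}(K)$, exactly as in Proposition~\ref{p:dual1_prime}, shrinking the $L_v^1$ once more if needed), the bound follows from the same estimate as for \eqref{eq:low1_1}.

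The crux is to extract the Diophantine conclusion from \eqref{eq:bbb}. Unwinding \eqref{eq:bbb}, there are $b\in B$, $\psi,\psi'\in\tilde\Psi_\epsilon$ and $z\in{\rm G}(K)$ with $b^{-1}\psi^{-1}(e,g^{-1})z=\psi'$. Taking the $v$-component for $v\in S'$, where $b_v=e$, gives $\psi_v^{-1}g_v^{-1}z=\psi'_v\in\sigma_v(\mathcal{O}_v(x_v^0,\delta_v))L_v^1$; applying this to $x_v^0$ and using that $L_v^1$ fixes $x_v^0$ and that $\sigma_v$ is a section of $p_v$ yields $\|\psi_v^{-1}g_v^{-1}zx_v^0-x_v^0\|_v\le\delta_v$, and similarly $\|\psi_vx_v^0-x_v^0\|_v\le\delta_v$. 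Writing $g_v^{-1}zx_v^0=\psi_v(\psi_v^{-1}g_v^{-1}zx_v^0)$, a triangle inequality together with \eqref{eq:metric} applied to $\psi_v$ (which ranges over a bounded set) gives $\|g_v^{-1}zx_v^0-x_v^0\|_v\le C_v\delta_v$ with $C_v$ bounded over $\Omega$; then \eqref{eq:metric} applied to $g_v$, together with $x_v=g_vx_v^0$, gives $\|zx_v^0-x_v\|_v\le c_vC_v\delta_v\le\epsilon_v$ by the choice of $\delta_v$ and $L_v^1$, which is \eqref{eq:low3_1}. For integrality, the $v$-component of the same identity at $v\in V_K^f\cap(S\setminus S')$, where $\mathcal{O}_v={\rm G}(O_v)$, $b_v\in{\rm G}(O_v)$ and $\psi_v,\psi'_v\in{\rm G}(O_v)$, gives $z=\psi_vb_v\psi'_v\in{\rm G}(O_v)$; as these are precisely the non-Archimedean places outside $(V_K\backslash S)\cup S'$, we obtain $z\in{\rm G}(O_{(V_K\backslash S)\cup S'})$. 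Finally $z=(e,g)\psi b\psi'$ with $(e,g)\psi$ and $\psi'$ lying in bounded subsets of $G_{V_K}$, so \eqref{eq:low2_1} follows from \eqref{l:height}. I expect the main obstacle to be precisely this last step: the extra thickening factor $\psi$ on the $\varsigma$-side, absent from Proposition~\ref{p:dual1_prime}, forces one to convert closeness of $\psi_v^{-1}g_v^{-1}z$ to $e$ (measured through the section $\sigma_v$) into closeness of $zx_v^0$ to $x_v$ via the auxiliary estimate $\|\psi_vx_v^0-x_v^0\|_v\le\delta_v$, all the while keeping the constants $c_v,c_v',C_v$ uniform over $\Omega$ so as not to spoil the uniformity over finite $S'$.
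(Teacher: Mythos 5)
Your overall strategy matches the paper's: you take the section-based thickening sets of Proposition~\ref{p:dual1_prime} and fatten them on both sides as in Proposition~\ref{p:dual2}. However, you have misidentified the normalizing constant $c_v'$. You import $c_v'=\sup_{g\in\Omega}c_v'(g)$ from \eqref{eq:norm110}, i.e., the constant controlling the \emph{conjugation} action of $\Omega$. That constant is the right one in Proposition~\ref{p:dual2}, where the thickening sets are balls around $e$ and one needs to control $(e,x)\tilde\Psi_\epsilon^{-1}(e,x^{-1})$, but it plays no role in the estimate you actually carry out here. What your final triangle inequality needs is a bound on $\|\psi_v\,y\|_v\le c\,\|y\|_v$ for $\psi_v$ ranging over the thickening set $\sigma_v(\mathcal{O}_v(x_v^0,\cdot))L_v^1$ — i.e., a bound on the \emph{linear} action of the thickening elements on $K_v^n$, not on the conjugation action of $\Omega$. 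This is precisely the constant the paper introduces: a fixed $c''_v$ such that $\|r_v\,y\|_v\le c''_v\|y\|_v$ for all $r_v\in\sigma_v(\mathcal{O}_v(x_v^0,\epsilon_0))L_v^1$, and then $c_v'=c_v(1+c''_v)$, $\delta_v=\epsilon_v/c_v'$. With that choice the arithmetic closes exactly: $\|x_v-zx_v^0\|_v\le c_v(\delta_v+c''_v\delta_v)=\epsilon_v$, with no further shrinking of $L_v^1$. In your version the needed bound $c_vC_v\delta_v\le\epsilon_v$ does not follow from your definitions alone; you patch it by invoking the $(2n+1)$ factor inherited from Proposition~\ref{p:dual2} and by shrinking $L_v^1$ ``once more'', which works but is a workaround for having normalized $\delta_v$ by the wrong constant. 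Note also that the factor $(2n+1)$ in Proposition~\ref{p:dual2} is itself exactly the explicit operator-norm bound for elements of the ball $\mathcal{O}_v(\delta_v)$, which is the analogue of $c''_v$ there; its reappearance in your $\delta_v$ is a coincidence of notation, since for a general stabilizer $L_v$ a fixed $L_v^1$ can easily have $c_v(l)>2n$ for some $l\in L_v^1$.

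Two smaller points. For \eqref{eq:low1_12} you again reach for \eqref{eq:norm110} and speak of conjugation ``distorting the shrinking neighbourhoods of $e$'', but $\tilde\Psi_\epsilon^{-1}$ is not a shrinking neighbourhood of $e$ in the $S'$-components — it lives near $(L_v^1)^{-1}$. The clean argument, which the paper uses, is simply right-invariance of Haar measure on the unimodular group $G_{V_K}$: $m(\tilde\Psi_\epsilon^{-1}(e,g^{-1}))=m(\tilde\Psi_\epsilon^{-1})$, so after checking injectivity of the projection (which is where the conjugate $(e,g)\tilde\Psi_\epsilon^{-1}(e,g^{-1})$ actually enters, to separate ${\rm G}(K)$-translates), \eqref{eq:low1_12} reduces to \eqref{eq:low1_1}. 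Finally, you write that $C_v$ is ``bounded over $\Omega$''; it should be bounded over the thickening set, independently of $\Omega$. None of these is fatal — your proof can be repaired by replacing the conjugation constant with the correct operator-norm bound for the thickening set — but as written the key constants are mismatched and the estimate is closed by an unjustified assertion.
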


\begin{proof}
We use notation introduced in the proof of Proposition \ref{p:dual1_prime}.
There exists $c''_v>0$ such that for every $r_v\in \sigma_v(\mathcal{O}_{v}(x_v^0,\epsilon_0))L_v^1$,
we have
$$
\|r_v x\|_v\le c''_v\|x\|_v, \quad x\in K_v^n.
$$
Let $c_v'=c_v(1+c''_v)$  and
\begin{align*}
\tilde \Psi_\epsilon= \left( \prod_{v\in V_K\backslash S'} \mathcal{O}_v\right)\times
\left(\prod_{v\in S'} \sigma_v(\mathcal{O}_{v}(x_v^0,\epsilon_v/c_v'))L_v^1\right),
\end{align*}
Estimate (\ref{eq:low1_1}) is proved as in Proposition \ref{p:dual1_prime}. 
Taking $\mathcal{O}_v$, $\epsilon_v$, and $L_v^1$ sufficiently small,
we can arrange that
$$
(e,g) \tilde \Psi^{-1}_\epsilon (e,g^{-1})z \cap (e,g) \tilde \Psi^{-1}_\epsilon (e,g^{-1})=\emptyset
\quad \hbox{for $z\in {\rm G}(K)$, $z\ne e$.}
$$
Then
$$
\mu(\tilde \Psi^{-1}_\epsilon (e,g^{-1}){\rm G}(K))\gg m(\tilde \Psi^{-1}_\epsilon (e,g^{-1}))=m(\tilde \Psi_\epsilon^{-1}).
$$
Hence, estimate (\ref{eq:low1_12}) follows from (\ref{eq:low1_1}).

Suppose that (\ref{eq:bbb}) holds. 
Then for some $b\in B$, $f'\in \prod_{v\in V_K\backslash S'} \mathcal{O}_v$,
$f\in \prod_{v\in S'} \sigma_v(\mathcal{O}_{v}(x_v^0,\epsilon_v/c_v'))L_v^1$, and $z\in {\rm G}(K)$, we have
$$
(b^{-1}(f')^{-1}z,f^{-1}g^{-1}z)\in \tilde \Psi_\epsilon.
$$
For $v\in S'$,
$$
\|f_vx_v^0-x_v^0\|_v\le \epsilon_v/c'_v\quad\hbox{and}\quad
\|f_v^{-1}g_v^{-1}z x_v^0-x_v^0\|_v\le \epsilon_v/c'_v.
$$
Hence, 
\begin{align*}
\|x_v-z x_v^0\|_v&\le \|g_v x^0_v-g_vf_vx_v^0\|_v+\|g_vf_vx_v^0-z x_v^0\|_v\\
&\le c_v\,(\|x_v^0-f_vx_v^0\|_v+ c''_v\|x_v^0-f_v^{-1}g_v^{-1}z x_v^0\|_v)\le \epsilon_v.
\end{align*}
Also, for $v\in (S\backslash S')\cap V_K^f$,
$$
z\in f'_vb_v{\rm G}({O}_v)\subset {\rm G}({O}_v).
$$
Hence, $z \in {\rm G}(O_{(V_K\backslash S)\cup S'})$.
This completes the proof of (\ref{eq:low3_1}).

Finally, since 
$$
z\in \Omega(f',f)(b,e) \tilde \Psi_\epsilon,
$$
estimate (\ref{eq:low2_1}) follows from (\ref{l:height}).
\end{proof}

\section{Proof of the main results}

\subsection{Almost sure diophantine approximation on the group variety}
In this section we complete the proofs of main results.
Our strategy is to combine the duality principle
from Section \ref{sec:duality} with the mean ergodic theorem from Section \ref{sec:mean}.

Let $S\subset V_K$ and ${\rm G}$ be a connected almost simple algebraic $K$-group.
Our first task is to establish an estimate on  the exponent $\mathfrak{a}_S({\rm G})$ of the variety $G$ 
defined after (\ref{eq:ass_omega}). It is worth mentioning that $\mathfrak{a}_S({\rm G})$
can, in principle, be computed in terms of the root data of ${\rm G}$ using methods from
\cite{GN1,GN2}, but for the purpose of this paper the simple estimate of Lemma \ref{l:ass} below will be sufficient.
We recall that
$$
\mathfrak{a}_S({\rm G}) = \sup_{\Omega\subset G_S}\limsup_{h\to \infty} \frac{\log A_S(\Omega,h)}{\log h},
$$
where $\Omega$ runs over bounded subset of $G_S$, and
$$
A_S(\Omega,h)=|\{z\in {\rm G}(K):\, \height(z)\le h,\;\; z\in \Omega \}|.
$$
We show that  $\mathfrak{a}_S({\rm G})$ can be estimated in terms of volumes of the sets
\begin{equation}\label{eq:b__h}
B_h=U_{V_K\backslash S}\{g\in G_{V_K\backslash S}:\, \height(g)\le h\}U_{V_K\backslash S}\subset
G_{V_K\backslash S}.
\end{equation}

\begin{Lemma}\label{l:ass}
Let $\Omega$ be a bounded subset of $G_S$.
Then there exist $c>1$ and $h_0>0$ such that for every $h\ge h_0$
$$
A_S(\Omega,h)\ll_\Omega m_{V_K\backslash S}(B_{ch}).
$$
In particular, it follows that for every $\delta>0$ and $h\ge h_0(S,\delta)$.
$$
m_{V_K\backslash S}(B_{h})\gg h^{\mathfrak{a}_S({\rm G})-\delta}.
$$
\end{Lemma}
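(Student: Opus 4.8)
The plan is to bound the counting function $A_S(\Omega,h)$ by a volume computation in $G_{V_K\backslash S}$, using the fact that ${\rm G}(K)$ sits discretely in $G_{V_K}=G_{V_K\backslash S}\times G_S$ with finite covolume, and then to transfer this bound into the asserted lower bound on $m_{V_K\backslash S}(B_h)$ by unwinding the definition of $\mathfrak{a}_S({\rm G})$. First I would fix a bounded $\Omega\subset G_S$ and observe that, since $\Omega$ is bounded, there is a finite $R\subset S$ with $\Omega\subset G_R\times\prod_{v\in S\backslash R}{\rm G}(O_v)$, and there is a compact neighbourhood $W$ of the identity in $G_S$ such that the translates $\Omega W$ still lie in a fixed bounded set. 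The key point is that the height is essentially bi-invariant under bounded sets (estimate \eqref{l:height}): for $z\in{\rm G}(K)\cap\Omega$ with $\height(z)\le h$, and for $g$ in a fixed small bi-$U_{V_K\backslash S}$-invariant neighbourhood $N$ of the identity in $G_{V_K\backslash S}$, the element $(g,\,\omega)z$ (with $\omega\in W$) has height $\ll_\Omega h$, hence lies in $B_{ch}\times(\text{bounded set in }G_S)$ for a suitable constant $c>1$ depending only on $\Omega$.

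The second step is a packing argument. Since ${\rm G}(K)$ is discrete in $G_{V_K}$, one can choose a neighbourhood $V$ of the identity in $G_{V_K}$ so small that $Vz_1\cap Vz_2=\emptyset$ for distinct $z_1,z_2\in{\rm G}(K)$; taking $V=N\times W'$ with $N\subset G_{V_K\backslash S}$ bi-$U_{V_K\backslash S}$-invariant of positive measure and $W'\subset G_S$ a neighbourhood of identity contained in the set above, the sets $Vz$ for $z\in{\rm G}(K)\cap\Omega$ with $\height(z)\le h$ are pairwise disjoint and all contained in $B_{ch}\times\Omega'$ for a fixed bounded $\Omega'\subset G_S$. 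Projecting to $G_{V_K\backslash S}$ — each $Vz$ has $G_{V_K\backslash S}$-projection a translate of $N$, contained in $B_{ch}U_{V_K\backslash S}=B_{ch}$ — and using that $m=m_{V_K\backslash S}\otimes m_S$ is a product measure, disjointness of the $Vz$ forces
$$
A_S(\Omega,h)\cdot m_{V_K\backslash S}(N)\,m_S(W')\le m_{V_K\backslash S}(B_{ch})\,m_S(\Omega'),
$$
which gives $A_S(\Omega,h)\ll_\Omega m_{V_K\backslash S}(B_{ch})$ once $h\ge h_0$; here I must be slightly careful that the $G_{V_K\backslash S}$-projections of the disjoint sets $Vz$ may overlap, so the cleaner route is to integrate the indicator of $\bigsqcup Vz$ over $G_{V_K}$ directly, which is legitimate precisely because the union is disjoint.

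For the second assertion, unravel the definition $\mathfrak{a}_S({\rm G})=\sup_\Omega\limsup_{h\to\infty}\frac{\log A_S(\Omega,h)}{\log h}$: given $\delta>0$, choose $\Omega$ with $\limsup_h\frac{\log A_S(\Omega,h)}{\log h}>\mathfrak{a}_S({\rm G})-\delta/2$, so that $A_S(\Omega,h)\gg h^{\mathfrak{a}_S({\rm G})-\delta/2}$ along a sequence $h\to\infty$; combined with $A_S(\Omega,h)\ll m_{V_K\backslash S}(B_{ch})$ this yields $m_{V_K\backslash S}(B_{ch})\gg h^{\mathfrak{a}_S({\rm G})-\delta/2}$, i.e. $m_{V_K\backslash S}(B_{h})\gg h^{\mathfrak{a}_S({\rm G})-\delta}$ after absorbing the constant $c$ into the exponent loss (using $\delta$ in place of $\delta/2$) and using that $h\mapsto m_{V_K\backslash S}(B_h)$ is monotone so the estimate along a sequence propagates to all large $h$. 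The main obstacle, and the only place real care is needed, is the interchange between ``counting $K$-rational points in a bounded region of $G_S$ with bounded height'' and ``volume of a height ball in the complementary factor $G_{V_K\backslash S}$'': one must use the product structure of the Tamagawa measure together with the near bi-invariance of $\height$ under bounded sets (equation \eqref{l:height}) and the discreteness of ${\rm G}(K)$ simultaneously, and keep track that the implied constants depend only on $\Omega$ and not on $h$ or on any finite $S'$. Monotonicity of $B_h$ in $h$ and the bi-$U_{V_K\backslash S}$-invariance built into its definition \eqref{eq:b__h} are exactly what make the projection step and the propagation-to-all-$h$ step go through cleanly.
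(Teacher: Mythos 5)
Your proposal follows essentially the same route as the paper's proof: discreteness of ${\rm G}(K)$ in $G_{V_K}$ gives a packing neighbourhood $\mathcal{O}$ with disjoint translates, near bi-invariance of the height (estimate \eqref{l:height}) shows that $\mathcal{O}\cdot\{\gamma\in{\rm G}(K):\gamma_S\in\Omega,\;\height(\gamma)\le h\}$ is contained in $B_{c h}\times\Omega'$ for some bounded $\Omega'\subset G_S$, and the product structure $m_{V_K}=m_{V_K\backslash S}\otimes m_S$ then yields $A_S(\Omega,h)\ll_\Omega m_{V_K\backslash S}(B_{ch})$. Your observation that one should integrate the indicator of the disjoint union $\bigsqcup \mathcal{O}\gamma$ over $G_{V_K}$, rather than naively project, is exactly what the paper does implicitly in the estimate $A_S(\Omega,h)\le m_{V_K}(\mathcal{O}\mathcal{A}_S(\Omega,h))/m_{V_K}(\mathcal{O})$.

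One small but real imprecision in the closing step: the inequality $\mathfrak{a}_S({\rm G})\le\limsup_h\log m_{V_K\backslash S}(B_h)/\log h$ deduced from the first part gives the lower bound $m_{V_K\backslash S}(B_h)\gg h^{\mathfrak{a}_S({\rm G})-\delta}$ \emph{along a sequence}, and your appeal to monotonicity of $h\mapsto m_{V_K\backslash S}(B_h)$ does not by itself upgrade this to a bound for all $h\ge h_0$: if the witnessing sequence $h_n$ has large multiplicative gaps, monotonicity only controls $m(B_h)$ from below by $h_n^{\mathfrak{a}-\delta/2}$ for $h\in[h_n,h_{n+1}]$, which is not $\gg h^{\mathfrak{a}-\delta}$ in general. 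To be rigorous one needs the regularity of the height-ball volume (polynomial growth with a genuine, not merely $\limsup$, exponent), which is established in \cite{GN1,GN2} and is implicitly invoked here. The paper's own proof is equally terse at this point — it simply asserts ``This implies the claim'' without addressing the $\limsup$-versus-all-$h$ transition — so this is a gap in exposition shared by the source rather than a substantive flaw in your argument. Everything else in your proposal is correct and matches the paper's reasoning.
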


\begin{proof}
Let
$$
\mathcal{A}_S(\Omega,h)=\{\gamma\in {\rm G}(K)\cap G_{V_K\backslash S}\Omega:\, \height(\gamma)\le h  \}.
$$
Then $A_S(\Omega,h)=|\mathcal{A}_S(\Omega,h)|$.
Since ${\rm G}(K)$ is a discrete subgroup of $G_{V_K}$, there exists a bounded neighbourhood $\mathcal{O}$
of identity in $G_{V_K}$ such that $\mathcal{O}\gamma_1\cap \mathcal{O}\gamma_2=\emptyset$
for $\gamma_1\ne \gamma_2\in {\rm G}(K)$. Then
$$
A_S(\Omega,h)=|\mathcal{A}_S(\Omega,h)|\le \frac{m_{V_K}(\mathcal{O}\mathcal{A}_S(\Omega,h))}{m_{V_K}(\mathcal{O})}.
$$
We observe that it follows from (\ref{l:height}) that
$$
\mathcal{O}\mathcal{A}_S(\Omega,h)\subset \mathcal{A}_S(\Omega',c_1 h)\subset
B_{c_2 h}\Omega'.
$$
for a bounded $\Omega'\subset G_S$ and $c_1,c_2>0$. This implies the claim.
\end{proof}

We will also need volume estimates for the intersections of $B_h$ with finite index subgroups.

\begin{Lemma}\label{l:finite}
Let $G_0$ be a finite index subgroup of $G_{V_K}$.
Then there exists $c\ge 1$ such that
$$
m_{V_K\backslash S}(B_h\cap G_0)\ge \frac{m_{V_K\backslash S}(B_{c^{-1}h})}{|G_{V_K}:G_0|}
$$
for every $h>0$.
\end{Lemma}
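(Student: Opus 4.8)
The plan is to prove this by a direct covering argument whose only real input is the height estimate \eqref{l:height}. First I would reduce to a finite-index subgroup living inside $G_{V_K\backslash S}$ itself: under our convention $G_{V_K\backslash S}\subset G_{V_K}$, so $B_h\cap G_0=B_h\cap H_0$ where $H_0:=G_0\cap G_{V_K\backslash S}$, and the natural map $H_0\backslash G_{V_K\backslash S}\to G_0\backslash G_{V_K}$ is injective; hence $[G_{V_K\backslash S}:H_0]\le N$ where $N:=[G_{V_K}:G_0]$. I would then fix right coset representatives $\gamma_1,\dots,\gamma_m\in G_{V_K\backslash S}$, $m\le N$, with $G_{V_K\backslash S}=\bigsqcup_{j=1}^m H_0\gamma_j$, and apply \eqref{l:height} to the bounded subset $\Omega:=U_{V_K\backslash S}\cdot\{e,\gamma_1^{-1},\dots,\gamma_m^{-1}\}$ of $G_{V_K\backslash S}$ to obtain a constant $c\ge 1$, depending only on $G_0$ and $S$, with $\height(g\omega)\le c\,\height(g)$ for all $g\in G_{V_K\backslash S}$ and $\omega\in\Omega$.

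The key step is the inclusion $B_{c^{-1}h}\subseteq\bigcup_{j=1}^m (B_h\cap H_0)\gamma_j$ for every $h>0$. Given $g\in B_{c^{-1}h}$, write $g=ug_1u'$ with $u,u'\in U_{V_K\backslash S}$ and $\height(g_1)\le c^{-1}h$, and choose $j$ with $g\in H_0\gamma_j$, so that $g\gamma_j^{-1}=u\,(g_1u'\gamma_j^{-1})$. By the choice of $c$ we get $\height(g_1u'\gamma_j^{-1})\le h$, hence $g_1u'\gamma_j^{-1}$ lies in $\{x\in G_{V_K\backslash S}:\height(x)\le h\}\subseteq B_h$, and therefore $g\gamma_j^{-1}\in U_{V_K\backslash S}B_h=B_h$ by the left $U_{V_K\backslash S}$-invariance of $B_h$. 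Since also $g\gamma_j^{-1}\in H_0$, this gives $g\in(B_h\cap H_0)\gamma_j$, proving the inclusion.

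Finally I would take $m_{V_K\backslash S}$-measures across this covering. The group $G_{V_K\backslash S}$ is a restricted direct product of the unimodular groups $G_v$, hence unimodular, so $m_{V_K\backslash S}$ is right-invariant and $m_{V_K\backslash S}((B_h\cap H_0)\gamma_j)=m_{V_K\backslash S}(B_h\cap H_0)$; summing over the $m\le N$ cosets yields $m_{V_K\backslash S}(B_{c^{-1}h})\le N\,m_{V_K\backslash S}(B_h\cap G_0)$, which is exactly the asserted bound. The only point needing genuine care — and the closest thing to an obstacle — is the choice of $c$ in the first paragraph, i.e. ensuring via \eqref{l:height} that right translation by a coset representative cannot push a height-$\le c^{-1}h$ element outside $B_h$; everything else is routine bookkeeping. (If one prefers not to invoke unimodularity, the same argument run with left cosets and left translation works verbatim using left-invariance of $m_{V_K\backslash S}$.) Measurability of $B_h\cap G_0$ is presupposed by the statement, and in the intended application $G_0=G^U$ is closed of finite index, hence open, so this causes no difficulty.
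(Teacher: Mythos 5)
Your proof is correct and follows essentially the same route as the paper: both arguments decompose $G_{V_K\backslash S}$ into cosets of $G_0\cap G_{V_K\backslash S}$ (at most $[G_{V_K}:G_0]$ of them), invoke the height estimate \eqref{l:height} to absorb the coset representatives into a change of radius $h\mapsto ch$, and then use invariance of the Haar measure $m_{V_K\backslash S}$. The only cosmetic difference is that the paper partitions $B_h$ by left cosets and left-translates each piece into $B_{ch}\cap G_0$, while you cover $B_{c^{-1}h}$ by right translates of $B_h\cap(G_0\cap G_{V_K\backslash S})$ — the two formulations are equivalent, as you yourself note in your closing parenthetical.
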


\begin{proof}
Let $\{g_i\}$ be a finite set of coset representatives of $G_{V_K\backslash S}/(G_{V_K\backslash S}\cap G_0)$.
It follows from (\ref{l:height}) that there exists $c\ge 1$ such that
$g_i^{-1}B_h\subset B_{ch}$. Hence,
\begin{align*}
m_{V_K\backslash S}(B_h)&=\sum_i m_{V_K\backslash S}(B_h\cap g_i G_0)=\sum_i m_{V_K\backslash S}(g_i^{-1}B_h\cap G_0)\\
&\le |G_{V_K}:G_0| m_{V_K\backslash S}(B_{ch}\cap G_0).
\end{align*}
This implies the claim.
\end{proof}

{\it Convention:} In the proofs of the Theorems \ref{th:g1} and \ref{th:g2}, implicit constants may depend
on the set of places $S$, but are independent of the subset $S'$ unless stated otherwise.
In the proof of Theorem \ref{th:g3}, implicit constants may depend on $S'$ as well.

\begin{proof}[Proof of Theorem \ref{th:g1}(i)]
Let $\beta_h$ be the Haar-uniform probability measure supported on the set $B_h$
defined in (\ref{eq:b__h}).
We denote by $\pi_{V_K\backslash S}(\beta_h)$ the corresponding averaging operator acting on
$L^2(\Upsilon)$
(see (\ref{eq:average})).
The main idea of the proof is to combine 
Theorem \ref{cor:mean_simply connected}
with Proposition \ref{p:dual}. By Theorem \ref{cor:mean_simply connected},
for every $\phi\in L^2(\Upsilon)$ and $h,\delta'>0$,
$$
\left\| \pi_{V_K\backslash S}(\beta_h)\phi -\int_\Upsilon \phi\,d\mu\right\|_2
\ll_{\delta'} m_{V_K\backslash S}(B_h)^{-\frac{1}{\mathfrak{q}_{V_K\backslash S}({\rm G})}+\delta'}\|\phi\|_2.
$$
Moreover, it follows from Lemma \ref{l:ass} that this estimate can be rewritten as 
\begin{equation}\label{eq:norm}
\left\| \pi_{V_K\backslash S}(\beta_h)\phi -\int_\Upsilon \phi\,d\mu\right\|_2
\ll_{\delta'} h^{-\frac{\mathfrak{a}_S({\rm G})}{\mathfrak{q}_{V_K\backslash S}({\rm G})}+\delta'}\|\phi\|_2
\end{equation}
for every $\delta'>0$ and $h\ge h_0(\delta')$.

We pick a bounded subset $\Omega$ of $G_S$.
Let $\Phi_\epsilon\subset \Upsilon$ be the sets introduced in Proposition \ref{p:dual}
which are indexed by $\epsilon=(\epsilon_v)_{v\in S'}$ with finite $S'\subset S$ and $\epsilon_v\in I_v$.
We denote by $\phi_{\epsilon}$ the characteristic function of $\Phi_{\epsilon}$. Then (\ref{eq:norm}) gives
\begin{equation}\label{eq:norm2}
\left\| \pi_{V_K\backslash S}(\beta_h)\phi_\epsilon -\mu(\Phi_\epsilon)\right\|_2
\ll_{\delta'} h^{-\frac{\mathfrak{a}_S({\rm G})}{\mathfrak{q}_{V_K\backslash S}({\rm G})}+\delta'}\mu(\Phi_\epsilon)^{1/2}
\end{equation}
for every $\delta'>0$ and $h\ge h_0(\delta')$.
We set 
\begin{equation}\label{eq:h}
h_\epsilon=\left( \prod_{v\in S'} \epsilon_v^{-r_v \frac{\dim({\rm G})}{\mathfrak{a}_S({\rm G})}-\delta}
\right)^{\mathfrak{q}_{V_K\backslash S}({\rm G})/2}
\end{equation}
with $\delta>0$. Let
\begin{equation}\label{eq:u_epsilon}
\Upsilon_\epsilon=\{\varsigma\in\Upsilon:\, B_{h_\epsilon}^{-1}\varsigma\cap \Phi_{\epsilon}=\emptyset\}.
\end{equation}
Then for $\varsigma\in\Upsilon_\epsilon$, we have $\pi_{V_K\backslash S}(\beta_{h_\epsilon})\phi_\epsilon(\varsigma)=0$.
Hence, it follows from (\ref{eq:norm2}) that for every $\delta'>0$, and $\epsilon$, 
we have
\begin{align}\label{eq:pos}
\mu(\Upsilon_\epsilon)&\ll_{\delta'} h_\epsilon^{-\frac{2\mathfrak{a}_S({\rm G})}{\mathfrak{q}_{V_K\backslash S}({\rm G})}+2\delta'}\mu(\Phi_\epsilon)^{-1}
\end{align}
provided that $h_\epsilon\ge h_0(\delta')$.
Combining this estimate with  \eqref{eq:h} and \eqref{eq:llow}, we conclude that
\begin{align}\label{eq:pos1}
\mu(\Upsilon_\epsilon)&\ll_{\Omega,\delta'} \prod_{v\in S'} \epsilon_v^{\theta_v}
\end{align}
for every $\delta'>0$ and $\epsilon$ satisfying $h_\epsilon\ge h_0(\delta')$, where 
\begin{align*}
\theta_v &=\left(-r_v \frac{\dim({\rm G})}{\mathfrak{a}_S({\rm G})}-\delta\right)
\frac{\mathfrak{q}_{V_K\backslash S}({\rm    G})}{2}
\left(-\frac{2\mathfrak{a}_S({\rm G})}{\mathfrak{q}_{V_K\backslash S}({\rm G})}+2\delta'\right)
-r_v\dim({\rm G})\\
&=\mathfrak{a}_S({\rm G}) \delta-\delta'\cdot \mathfrak{q}_{V_K\backslash S}({\rm G})\left(r_v \frac{\dim({\rm G})}{\mathfrak{a}_S({\rm G})}+\delta\right).
\end{align*}
We pick $\delta'=\delta'(\delta)>0$ so that $\theta_v>0$.

Let $E_v=I_v$ for $v\in V_K^f$, and let $E_v=\{2^{-n}\}_{n\ge 1}$ for $v\in V_K^\infty$, where we discretize the parameter in the archemedean part in order  to take advantage of the Borel-Cantelli lemma. 
Set $E_{S'}=\prod_{v\in S'} E_v$ for finite $S'\subset S$.
We denote by $\Upsilon_{S'}$ the $\limsup$ of the sets $\Upsilon_\epsilon$ with
$\epsilon\in {E}_{S'}$. Since $h_\epsilon\ge h_0(\delta')$ holds 
for all but finitely many $\epsilon\in {E}_{S'}$, estimate 
(\ref{eq:pos1}) holds for all but finitely many
$\epsilon\in {E}_{S'}$ as well, and it follows that
$$
\sum_{\epsilon\in {E}_{S'}} \mu(\Upsilon_\epsilon)<\infty.
$$
Hence, by the Borel--Cantelli lemma, $\mu(\Upsilon_{S'})=0$.
Let $\tilde \Upsilon_{S'}\subset G_{V_K}$ be the preimage of $\Upsilon_{S'}$.
Then $m(\tilde \Upsilon_{S'})=0$.
We denote by $\tilde \Upsilon_0$ the union of $\tilde \Upsilon_{S'}$ over finite subsets $S'$ of $S$.
Then $m(\tilde \Upsilon_0)=0$ as well.
Moreover, using Fubini's theorem and 
passing to a superset of $\tilde \Upsilon_0$, we can arrange that $\tilde \Upsilon_0$ additionally
satisfies the property that for every finite $S'\subset V_K$ and
$(f',f)\in G_{V_K\backslash S'}\times G_{S'}$ not belonging to $\tilde \Upsilon_0$,
the set $\{f'':\, (f'',f)\notin \tilde \Upsilon_0\}$ has full measure in $G_{V_K\backslash S'}$.
In particular,  it follows that if for $y\in G_{V_K\backslash S}$ and $x\in G_S$, we have
$(y,x)\notin \tilde \Upsilon_0$,
then there exists $(y',x')\notin \tilde \Upsilon_0$ such that
$x_v'=x_v$ for $v\in S'$, $x'_v\in {\rm G}(O_v)$ for $v\in V_K^f\cap (S\backslash S')$,
and $y'\in \mathcal{O}_{V_K\backslash S}(1)$. We shall use this property below.

Let
\begin{equation}\label{eq:om} 
\Omega'=\{x\in\Omega:\, \exists y\in \mathcal{O}_{V_K\backslash S}(1):\, (y,x^{-1}) \notin \tilde\Upsilon_0\}.
\end{equation}
Since
$$
(\mathcal{O}_{V_K\backslash S}(1)\times (\Omega\backslash \Omega')^{-1})\subset \tilde \Upsilon_0,
$$
it follows that the set $\Omega\backslash \Omega'$ has measure zero.

Let $G_{S}=\cup_{j\ge 1} \Omega_j$ be an exhaustion of $G_{S}$ by compact sets.
Then $Y:=\cup_{j\ge 1} \Omega_j'$ is a subset of $G_{S}$ of full measure.
Hence, it is suffices to show that given compact $\Omega\subset G_S$,
every $x\in\Omega'$ satisfies the claim of the theorem. 

For $x\in \Omega'$, there exists $\tilde \varsigma:=(y',(x')^{-1})\notin \tilde \Upsilon_0$ such that
$x_v'=x_v$ for $v\in S'$, $x'_v\in {\rm G}(O_v)$ for $v\in V_K^f\cap (S\backslash S')$,
and $y'\in \mathcal{O}_{V_K\backslash S}(1)$.

Since $\tilde \varsigma{\rm G}(K)\notin \Upsilon_0$, it follows that
$\tilde \varsigma{\rm G}(K)\notin \Upsilon_\epsilon$ for all but finitely many $\epsilon\in {E}_{S'}$.
That is, there exists $\epsilon_0(x,S',\delta)>0$ such that
for $\epsilon\in {E}_{S'}\cap (0,\epsilon_0(x,S',\delta))^{S'}$,
we have
$$
B_{h_\epsilon}^{-1}\tilde \varsigma{\rm G}(K) \cap \Phi_\epsilon\ne \emptyset,
$$
and
$$
(\mathcal{O}_{V_K\backslash S}(1)^{-1}B_{h_\epsilon})^{-1}(e,(x')^{-1}){\rm G}(K) \cap \Phi_\epsilon\ne \emptyset.
$$
Now we are in position to apply Proposition \ref{p:dual}.
It follows that there exists $z\in {\rm G}(K)$ such that
$$
\height(z)\ll_\Omega \max_{b\in \mathcal{O}_{V_K\backslash S}(1)^{-1}B_{h_\epsilon}} \height(b)\ll h_\epsilon
$$
and
\begin{align*}
&\|x'_v-z\|_v\le \epsilon_v\quad\hbox{for all $v\in S'$,}\\
&\|x'_v-z\|_v\le 1\quad\hbox{for all $v\in S\backslash S'$.}
\end{align*}
Since $x_v'\in {\rm G}(O_v)$ for $v\in V_K^f\cap (S\backslash S')$, it follows that
$z\in {\rm G}(O_{(V_K\backslash S)\cup S'})$
Hence, for every $\delta>0$ and $\epsilon\in {E}_{S'}\cap (0,\epsilon_0(x,S',\delta))^{S'}$, we have
$$
\omega_S(x,\epsilon)\ll_\Omega h_\epsilon=\left( \prod_{v\in S'} \epsilon_v^{-r_v \frac{\dim({\rm G})}{\mathfrak{a}_S({\rm G})}-\delta}
\right)^{\mathfrak{q}_{V_K\backslash S}(G)/2}.
$$
This implies that for all $\delta>0$ and 
sufficiently small $\epsilon$, depending on $\delta$ and $\Omega$, we also have  (absorbing the constant by passing from $\delta$ to $2\delta$)

$$
\omega_S(x,\epsilon)\le \left( \prod_{v\in S'} \epsilon_v^{-r_v \frac{\dim({\rm G})}{\mathfrak{a}_S({\rm G})}-2\delta}
\right)^{\mathfrak{q}_{V_K\backslash S}(G)/2}\,\,\,.
$$

Now in order to finish the proof of the theorem, we need to extend this estimate 
to the case when $\epsilon_v\in (0,\epsilon_0(x,S',\delta))$ for $v\in V_K^\infty\cap S'$.
For such $\epsilon_v$, there exists $\epsilon_v'\in {E}_v\cap
(0,\epsilon_0(x,S',\delta))$ such that $\epsilon_v'\le \epsilon_v\le 2\epsilon'_v$.
For $v\in V_K^f\cap S'$, we set $\epsilon_v'=\epsilon_v$.
Then
\begin{align}\label{eq:oooo}
\omega_S(x,\epsilon)&\le \omega_S(x,\epsilon')\le 
\left( \prod_{v\in S'} (\epsilon'_v)^{-r_v \frac{\dim({\rm G})}{\mathfrak{a}_S({\rm G})}-\delta}
\right)^{\mathfrak{q}_{V_K\backslash S}({\rm G})/2}\\
&\ll_{V_K^\infty\cap S'} \left( \prod_{v\in S'} \epsilon_v^{-r_v \frac{\dim({\rm G})}{\mathfrak{a}_S({\rm G})}-\delta}
\right)^{\mathfrak{q}_{V_K\backslash S}({\rm G})/2}\nonumber 
\end{align}
for every $\delta>0$ and $\epsilon=(\epsilon_v)_{v\in S'}$ with
$\epsilon_v\in I_v\cap (0,\epsilon_0(x,S',\delta))$.
This also implies that
$$
\omega_S(x,\epsilon)
\le \left( \prod_{v\in S'} \epsilon_v^{-r_v \frac{\dim({\rm G})}{\mathfrak{a}_S({\rm G})}-2\delta}
\right)^{\mathfrak{q}_{V_K\backslash S}({\rm G})/2}
$$
for all $\delta>0$ and sufficiently small $\epsilon$ (depending on $\delta$ and $V_K^\infty\cap S'$).
This completes the proof.
\end{proof}

\begin{proof}[Proof of Theorem \ref{th:g2}]
The proof of the theorem follows the same strategy as the proof of Theorem \ref{th:g1},
but we choose the parameter $h_\epsilon$ to be
$$
h_\epsilon=\left( \prod_{v\in S'} \epsilon_v^{-\frac{r_v\dim({\rm G})+\sigma_S}{\mathfrak{a}_S({\rm G})}-\delta}
\right)^{\mathfrak{q}_{V_K\backslash S}({\rm G})/2}
$$
with $\delta>0$.

We fix a bounded subset $\Omega$ of $G_S$ and consider the sets
$$
\Upsilon_\epsilon=\{\varsigma\in\Upsilon:\, B_{h_\epsilon}^{-1}\varsigma\cap \Phi_{\epsilon}=\emptyset\},
$$
indexed by $\epsilon=(\epsilon_v)_{v\in S'}$ with finite $S'\subset S$ and $\epsilon_v\in I_v$, where
the sets $\Phi_\epsilon$ is defined in Proposition \ref{p:dual}.
Arguing as in the proof of Theorem \ref{th:g1}, we obtain the estimate
\begin{equation}\label{eq:uuuu}
\mu(\Upsilon_\epsilon) \ll_{\Omega,\delta'} \prod_{v\in S'} \epsilon_v^{\theta_v},
\end{equation}
where
\begin{align*}
\theta_v &=\left(-\frac{r_v\dim({\rm G})+\sigma_S}{\mathfrak{a}_S({\rm G})}-\delta\right)\frac{\mathfrak{q}_{V_K\backslash S}({\rm
    G})}{2}\left(-\frac{2\mathfrak{a}_S({\rm G})}{\mathfrak{q}_{V_K\backslash S}({\rm G})}+2\delta'\right)-r_v\dim({\rm G})\\
&= \sigma_S+\mathfrak{a}_S({\rm G}) \delta-\delta' \cdot \mathfrak{q}_{V_K\backslash S}({\rm G})\left(\frac{r_v\dim({\rm G})+\sigma_S}{\mathfrak{a}_S({\rm G})}+\delta\right).
\end{align*}
This estimate is valid for every $\delta,\delta'>0$ and $\epsilon$ as above
provided that $h_\epsilon\ge h_0(\delta')$. Moreover, we emphasize that this
estimate is uniform over finite $S'\subset S$.
We choose $\delta'=\delta'(\delta)>0$ such that $\theta_v\ge \theta>\sigma_S$. 

Let
$$
{E}_{S}=\{(\epsilon_v)_{v\in S}:\epsilon_v=q_v^{-n_v}
\hbox{ with $n_v\ge 0$, $n_v=0$ for a.e. $v$}\},
$$
where we set $q_v=2$ for $v\in V_K^\infty$.
We note that $h_\epsilon\ge h_0(\delta')$ for all but finitely many $\epsilon\in E_S$.
Therefore, estimate (\ref{eq:uuuu}) holds for all but finitely many $\epsilon\in E_S$,
and the sum $\sum_{\epsilon\in {E}_{S}} \mu(\Upsilon_\epsilon)$
can be estimated (except possibly finitely many terms) by 
$$
\sum_{\epsilon\in E_S} \left(\prod_{v\in S} \epsilon_v^{\theta_v}\right)
\le  \prod_{v\in S} (1-q_v^{-\theta})^{-1},
$$
where the last product converges because $\theta>\sigma_S$.

Let $\Upsilon_0$ be the $\limsup$ of the sets $\Upsilon_\epsilon$ as $\epsilon\in {E}_{S}$.
Then by the Borel-Cantelli lemma, $\mu(\Upsilon_0)=0$. 
Let $\tilde \Upsilon_0\subset G_{V_K}$ be the preimage of $\Upsilon_0$.
Then $m(\tilde \Upsilon_0)=0$. Moreover, as in the proof of Theorem \ref{th:g1}(i),
we can pass to a superset of $\tilde \Upsilon_0$ to arrange that 
if for $y\in G_{V_K\backslash S}$ and $x\in G_S$, we have
$(y,x)\notin \tilde \Upsilon_0$,
then there exists $(y',x')\notin \tilde \Upsilon_0$ such that
$x_v'=x_v$ for $v\in S'$, $x'_v\in {\rm G}(O_v)$ for $v\in V_K^f\cap (S\backslash S')$,
and $y'\in \mathcal{O}_{V_K\backslash S}(1)$. 

Given bounded $\Omega\subset G_S$, we define $\Omega'$ as in (\ref{eq:om}).
Then $\Omega\backslash \Omega'$ has measure zero, and it suffices to show that every $x\in\Omega'$
satisfies the claim of the theorem.

For $x\in \Omega'$, there exists $\tilde \varsigma:=(y',(x')^{-1})\notin \tilde \Upsilon_0$ such that
$x_v'=x_v$ for $v\in S'$, $x'_v\in {\rm G}(O_v)$ for $v\in V_K^f\cap (S\backslash S')$,
and $y'\in \mathcal{O}_{V_K\backslash S}(1)$.
Since $\tilde \varsigma{\rm G}(K)\notin \Upsilon_0$,  we conclude that $\tilde \varsigma{\rm G}(K)\notin \Upsilon_\epsilon$
for all but finitely many $\epsilon\in {E}_S$.
In particular, there exists $\epsilon_v^0(x,\delta)\in (0,1]$, $v\in S$, such that 
$\epsilon_v^0(x,\delta)=1$ for almost all $v$ and 
for all 
$\epsilon=(\epsilon_v)_{v\in S'}$ with finite $S'\subset S$
and $\epsilon_v\in {E}_v\cap  (0,\epsilon_v^0(x,\delta))$,
we have
$$
B_{h_\epsilon}^{-1}\tilde \varsigma{\rm G}(K)\cap \Phi_\epsilon\ne \emptyset.
$$
Applying Proposition \ref{p:dual}, we deduce that
there exists $z\in {\rm G}(K)$ such that
$$
\height(z)\ll_\Omega h_\epsilon
$$
and 
\begin{align*}
&\|x'_v-z\|_v\le \epsilon_v\quad\hbox{for all $v\in S'$,}\\
&\|x'_v-z\|_v\le 1\quad\hbox{for all $v\in S\backslash S'$,}
\end{align*}
provided that $\epsilon_v\in \{q_v^{-n}\}_{n\ge 0}\cap  (0,\epsilon_v^0(x,\delta))$.
Since $x_v'\in {\rm G}(O_v)$ for $v\in V_K^f\cap (S\backslash S')$, it follows that
$z\in {\rm G}(O_{(V_K\backslash S)\cup S'})$.

We conclude that
\begin{equation}\label{eq:last}
\omega_S(x,\epsilon)\ll_\Omega h_\epsilon=\left( \prod_{v\in S'} \epsilon_v^{-\frac{r_v\dim(G)+\sigma_S}{\mathfrak{a}_S(G)}-\delta}
\right)^{\mathfrak{q}_{V_K\backslash S}(G)/2}
\end{equation}
for every finite $S'\subset S$ and  $\epsilon\in {E}_{S'}$ such that 
$\epsilon_v\in \{q_v^{-n}\}_{n\ge 0}\cap  (0,\epsilon_v^0(x,\delta))$.
Moreover, this implies that for all $\delta>0$ and sufficiently small $\epsilon$
(depending on $\delta$ and $\Omega$), we have
$$
\omega_S(x,\epsilon)\le\left( \prod_{v\in S'} \epsilon_v^{-\frac{r_v\dim(G)+\sigma_S}{\mathfrak{a}_S(G)}-2\delta}
\right)^{\mathfrak{q}_{V_K\backslash S}(G)/2}
$$
Finally, it remains to extend this estimate to $\epsilon_v\in (0,\epsilon_v^0(x,\delta))$ when
$v\in V_K^\infty\cap S'$. This can be done as in (\ref{eq:oooo}).
\end{proof}

\subsection{Diophantine approximation at every point on the group variety}

\begin{proof}[Proof of Theorem \ref{th:g1}(ii)]
Without loss of generality, we may assume that $\Omega=\prod_{v\in S} \Omega_v$
where $\Omega_v$ is a bounded subset $G_v$ and $\Omega_v\supset {\rm G}(O_v)$ for all
$v\in V_K^f$. 

Let the sets $\tilde \Psi_\epsilon\subset G_{V_K}$ be as defined in Proposition \ref{p:dual2}
indexed by $\epsilon=(\epsilon_v)_{v\in S'}$ with finite $S'\subset S$ and $\epsilon_v\in I_v$.
We set 
$$
h_\epsilon=\left( \prod_{v\in S'} \epsilon_v^{-r_v \frac{\dim({\rm G})}{\mathfrak{a}_S({\rm G})}-\delta} \right)^{\mathfrak{q}_{V_K\backslash S}({\rm G})}
$$
with $\delta>0$ and
$$
\Upsilon_\epsilon=\{\varsigma\in\Upsilon:\, B_{h_\epsilon}^{-1}\varsigma\cap \tilde \Psi_{\epsilon} {\rm G}(K)=\emptyset\}.
$$
Let $\psi_\epsilon$ be the characteristic function of the set $\Psi_{\epsilon}:=\tilde \Psi_{\epsilon}
{\rm G}(K)\subset\Upsilon$. As in the proof of Theorem \ref{th:g3}, we get
$$
\left\| \pi_{V_K\backslash S}(\beta_h)\psi_\epsilon -\mu(\Psi_\epsilon)\right\|_2
\ll_{\delta'} h^{-\frac{\mathfrak{a}_S({\rm G})}{\mathfrak{q}_{V_K\backslash S}({\rm G})}+\delta'}\mu(\Psi_\epsilon)^{1/2}
$$
for every $\delta'>0$ and  $h\ge  h_0(\delta')$.
Since $\pi_{V_K\backslash S}(\beta_{h_\epsilon})\psi_\epsilon=0$ on $\Upsilon_\epsilon$,
it follows that 
$$
\mu(\Upsilon_\epsilon)\ll_{\delta'} 
h_\epsilon^{-\frac{2\mathfrak{a}_S({\rm G})}{\mathfrak{q}_{V_K\backslash S}({\rm G})}+2\delta'}\mu(\Psi_\epsilon)^{-1},
$$
and using (\ref{eq:llow2}), we conclude that
\begin{equation}\label{eq:est1}
\mu(\Upsilon_\epsilon)
\ll_{\Omega,\delta'} \prod_{v\in S'} \epsilon_v^{\theta_v},
\end{equation}
 where
\begin{align*}
\theta_v&=\left(r_v \frac{\dim({\rm G})}{\mathfrak{a}_S({\rm G})}+\delta\right)\mathfrak{q}_{V_K\backslash S}({\rm
  G})\left(\frac{2\mathfrak{a}_S({\rm G})}{\mathfrak{q}_{V_K\backslash S}({\rm G})}-2\delta'\right)-
r_v\dim ({\rm G})\\
&=r_v\dim({\rm G}) +\delta\cdot \mathfrak{q}_{V_K\backslash S}({\rm
  G}) \left(\frac{2\mathfrak{a}_S({\rm G})}{\mathfrak{q}_{V_K\backslash S}({\rm
      G})}-2\delta'\right)-\delta'\cdot 2r_v \frac{\dim({\rm G})}{\mathfrak{a}_S({\rm G})}.
\end{align*}
This estimate holds provided that $h_\epsilon\ge h_0(\delta')$, so that
it holds for all but finitely many $\epsilon\in E_S$.

We choose $\delta'=\delta'(\delta)$ such that $\theta_v>r_v\dim({\rm G})$.

For $x\in \Omega$, there exists $x'\in \Omega$ such that $x_v'=x_v$ for $v\in S'$ and $x_v'\in {\rm G}(O_v)$
for $v\in V_K^f\cap (S\backslash S')$.
For $\varsigma:=(e,(x')^{-1}){\rm G}(K)$, we have by (\ref{eq:llow22})
\begin{equation}\label{eq:est2}
\mu(\tilde \Psi_\epsilon^{-1} \varsigma)  \gg_\Omega  \prod_{v\in S'} \epsilon_v^{r_v \dim({\rm G})}.
\end{equation}
Comparing (\ref{eq:est1}) and (\ref{eq:est2}), we conclude that
the inequality
\begin{equation}\label{eq:ennn}
\mu(\tilde \Psi_\epsilon^{-1} \varsigma)> \mu(\Upsilon_\epsilon)
\end{equation}
holds for all but finitely many $\epsilon\in E_S$.
If (\ref{eq:ennn}) holds, then $\tilde \Psi^{-1}_\epsilon \varsigma\nsubseteq \Upsilon_\epsilon$,
and 
$$
B_{h_\epsilon}^{-1} \tilde \Psi^{-1}_\epsilon \varsigma\cap \tilde \Psi_\epsilon {\rm G}(K)\ne \emptyset.
$$
Therefore, by Proposition \ref{p:dual2}, for all but finitely many $\epsilon\in E_S$,
there exists $z\in {\rm G}(K)$ such that
\begin{equation*}
\height(z)\ll_\Omega h_\epsilon
\end{equation*}
and
\begin{align*}
&\|x'_v-z\|_v\le \epsilon_v\quad\hbox{for all $v\in S'$,}\\
&\|x'_v-z\|_v\le 1\quad\hbox{for all $v\in S\backslash S'$.}
\end{align*}
Since $x_v'\in {\rm G}(O_v)$ for $v\in V_K^f\cap (S\backslash S')$,
it follows that $z\in {\rm G}(O_{(V_K\backslash S)\cup S'})$.

This shows that there exists $\epsilon_v^0(\Omega,\delta)\in (0,1]$ satisfying
$\epsilon_v^0(\Omega,\delta)=1$ for almost all $v$ such that
for every $x\in \Omega$ and $\epsilon\in E_S$ with $\epsilon_v\in (0,\epsilon_v^0(\Omega,\delta))$,
we have the estimate
$$
\omega_S\left(x,(\epsilon_v)_{v\in S'}\right)\ll_\Omega h_\epsilon=\left( \prod_{v\in S'} \epsilon_v^{-r_v \frac{\dim({\rm G})}{\mathfrak{a}_S({\rm G})}-\delta}
\right)^{\mathfrak{q}_{V_K\backslash S}({\rm G})}.
$$
Therefore, for all $\delta>0$ and sufficiently small $\epsilon$ (depending on $\delta$ and $\Omega$),
we also have
$$
\omega_S\left(x,(\epsilon_v)_{v\in S'}\right)\le\left( \prod_{v\in S'} \epsilon_v^{-r_v \frac{\dim({\rm G})}{\mathfrak{a}_S({\rm G})}-2\delta}
\right)^{\mathfrak{q}_{V_K\backslash S}({\rm G})}.
$$
Now it remains to extend this estimate to the case when $\epsilon_v\in (0,\epsilon_v^0(\Omega,\delta))$
for $v\in V_K^\infty\cap S'$,
which can be achieved as in (\ref{eq:oooo}).
This completes the proof of the theorem.
\end{proof}

\subsection{Diophantine approximation on homogeneous varieties}

Let ${\rm X}$ be an quasi-affine algebraic variety defined over $K$ equipped with 
a transitive action of a  connected almost simple algebraic $K$-group ${\rm G}$.
Let $S$ be a subset of $V_K$ and $S'$ a finite subset of $S$.
Before we start the proof of Theorem \ref{th:g3}, we need to describe the
structure of the topological closure $\overline{{\rm X}(O_{(V_K\backslash S)\cup  S'})}$
in $X_{S'}$.

\begin{Lemma}\label{l:closure}
Assume that ${\rm G}$ is isotropic over $V_K\backslash S$, and let
$p: \tilde {\rm G}\to {\rm G}$ denote the simply connected cover of ${\rm G}$.
Then the closure $\overline{{\rm X}(O_{(V_K\backslash S)\cup S'})}$ in $X_{S'}$ is open and
$$
\overline{{\rm X}(O_{(V_K\backslash S)\cup
    S'})}=p(\tilde G_{S'}){\rm X}(O_{(V_K\backslash S)\cup
    S'}).
$$
Moreover, it is a union of finitely many orbits of $p(\tilde G_{S'})$.
\end{Lemma}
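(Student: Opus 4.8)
The plan is to deduce the description of $\overline{{\rm X}(O_{(V_K\backslash S)\cup S'})}$ from the strong approximation theorem applied to the simply connected cover $\tilde{\rm G}$, together with the orbit structure of $p(\tilde G_{S'})$ on $X_{S'}$. First I would fix a base point $x^0\in {\rm X}(O_{(V_K\backslash S)\cup S'})$ (which exists, as this set is nonempty by hypothesis in our setting), let ${\rm H}=\operatorname{Stab}_{\rm G}(x^0)$ and lift to $\tilde{\rm H}=p^{-1}({\rm H})\subset\tilde{\rm G}$. Since ${\rm G}$ is isotropic over each $v\in V_K\backslash S$, so is $\tilde{\rm G}$, hence $\tilde{\rm G}$ satisfies strong approximation with respect to $(V_K\backslash S)\cup S'$ relative to the remaining places: $\tilde{\rm G}(K)$ is dense in $\tilde G_{S'}$ after saturating by the compact-open subgroups $\tilde{\rm G}(O_v)$ for $v\in V_K^f\cap(S\backslash S')$ and by $\tilde G_{V_K\backslash S}$. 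I would also record that $p$ is a central isogeny, so $p(\tilde G_v)$ is open of finite index in $G_v$ for all $v$, $p(\tilde G_v)=p(\tilde{\rm G}(O_v))\,{\rm G}(O_v)$ has full $G_v$-orbit equal to $G_v x^0_v$ whenever... — more precisely, the $p(\tilde G_{S'})$-orbits on $X_{S'}$ are open (since $p(\tilde G_{S'})$ is open in $G_{S'}$ and $G_{S'}$ acts with open orbits on the smooth locus of $X_{S'}$, the variety being homogeneous), so $X_{S'}$ is partitioned into open $p(\tilde G_{S'})$-orbits.

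Next I would prove the two inclusions. For $p(\tilde G_{S'}){\rm X}(O_{(V_K\backslash S)\cup S'})\subseteq \overline{{\rm X}(O_{(V_K\backslash S)\cup S'})}$: take $g\in p(\tilde G_{S'})$ and $y\in{\rm X}(O_{(V_K\backslash S)\cup S'})$; write $y=\gamma x^0$ for suitable $\gamma\in{\rm G}(K)$ (after possibly enlarging to a finite union of orbits — see below), lift $g$ to $\tilde g\in\tilde G_{S'}$, and approximate $\tilde g$ by $\tilde\gamma\in\tilde{\rm G}(K)$ using strong approximation so that $p(\tilde\gamma)$ is $S'$-close to $g$, is in $\tilde{\rm G}(O_v)$-translate for $v\in V_K^f\cap(S\backslash S')$, and is $V_K\backslash S$-integral; then $p(\tilde\gamma)y$ lies in ${\rm X}(O_{(V_K\backslash S)\cup S'})$ and is $S'$-close to $gy$. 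For the reverse inclusion $\overline{{\rm X}(O_{(V_K\backslash S)\cup S'})}\subseteq p(\tilde G_{S'}){\rm X}(O_{(V_K\backslash S)\cup S'})$: every point of ${\rm X}(O_{(V_K\backslash S)\cup S'})$ lies in some $p(\tilde G_{S'})$-orbit, each such orbit is open, hence the union of these orbits is open, hence closed in its own closure; since the $p(\tilde G_{S'})$-orbits are open they are also closed (complement is a union of orbits), so $p(\tilde G_{S'}){\rm X}(O_{(V_K\backslash S)\cup S'})$ is already closed, giving equality and openness simultaneously.

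Finally, the finiteness of the number of $p(\tilde G_{S'})$-orbits meeting ${\rm X}(O_{(V_K\backslash S)\cup S'})$ I would get from a class-number finiteness argument exactly as in the proof of Lemma~\ref{l:characters}: the $\tilde{\rm H}$-orbits of $\tilde{\rm G}(K)$-points modulo the congruence and integrality conditions are controlled by $H^1$ of the relevant groups over the completions (finite by \cite[\S6.4]{PlaRa}) together with the finiteness of the class number of $\tilde{\rm G}$ (see \cite[\S8.1]{PlaRa}); pushing forward along $p$ and using that $\ker p$ is finite central, the number of $p(\tilde G_{S'})$-orbits remains finite. The main obstacle I anticipate is bookkeeping the integrality conditions correctly: strong approximation must be invoked with respect to the precise set of places $(V_K\backslash S)\cup S'$ and the precise open subgroup $\prod_{v\in V_K^f\cap(S\backslash S')}\tilde{\rm G}(O_v)\times\tilde G_{V_K\backslash S}$, and one must check that $p$ of an $O_v$-integral point of $\tilde{\rm G}$ is $O_v$-integral in ${\rm G}\subset\operatorname{GL}_n$ (true for almost all $v$ by \cite[Proposition~6.8]{PlaRa}, and absorbable into the finitely many bad places by enlarging to a finite union of orbits). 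Everything else is a routine combination of strong approximation with the openness of $G_{S'}$-orbits on the smooth homogeneous variety $X_{S'}$.
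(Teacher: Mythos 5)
Your proposal follows essentially the same route as the paper: strong approximation for the simply connected cover $\tilde{\rm G}$ (valid since $\tilde{\rm G}$ is isotropic over $V_K\setminus S$), openness and closedness of $p(\tilde G_{S'})$-orbits in $X_{S'}$ (the paper cites \cite[\S3.1]{PlaRa}), and finiteness of the number of orbits via Galois cohomology. Two details, however, are off. You ask strong approximation to produce $\tilde\gamma\in\tilde{\rm G}(K)$ with $p(\tilde\gamma)$ ``$V_K\setminus S$-integral''; this is \emph{not} achievable — $V_K\setminus S$ is precisely the set of places at which strong approximation dispenses with any control — and it is also \emph{not} needed, since membership in ${\rm X}(O_{(V_K\setminus S)\cup S'})$ imposes integrality only at the finite places of $S\setminus S'$, which you did arrange. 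Relatedly, the ``write $y=\gamma x^0$'' step is vestigial: $\gamma$ is never used, and one may take $x^0:=y$ directly. The paper does exactly this, fixing an arbitrary $x^0\in{\rm X}(O_{(V_K\setminus S)\cup S'})$, setting $P(g)=p(g)x^0$ and $\Gamma=\tilde{\rm G}(K)\cap\bigcap_{v\in (S\setminus S')\cap V_K^f}p^{-1}({\rm G}(O_v))$, noting $P(\Gamma)\subset{\rm X}(O_{(V_K\setminus S)\cup S'})$, and observing that strong approximation makes $\Gamma$ dense in $\tilde G_{S'}$, whence $P(\tilde G_{S'})\subset\overline{P(\Gamma)}$.

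For the last claim, you invoke class-number finiteness; that is stronger machinery than needed. Because $S'$ is \emph{finite}, the number of $\tilde G_{S'}$-orbits in $X_{S'}$ is already finite by finiteness of $H^1(K_v,\cdot)$ at each individual local place $v\in S'$ (the paper cites \cite[\S6.4]{PlaRa}); no adelic or class-group argument is required.
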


\begin{proof}
Given $x^0\in {\rm X}(O_{(V_K\backslash S)\cup S'})$, we consider the map
$$
P:\tilde {\rm G}\to {\rm X}: g\mapsto p(g)x^0.
$$
Let 
$$
\Gamma=\tilde {\rm G}(K)\cap \left(\bigcap_{v\in (S\backslash S')\cap V_K^f} p^{-1}({\rm G}(O_v))\right).
$$
Then $P(\Gamma)\subset {\rm X}(O_{(V_K\backslash S)\cup S'})$. Since $\tilde G$ is isotropic
over $V_K\backslash S$, the subgroup $\Gamma$ is dense in $\tilde G_{S'}$, and
$$
P(\tilde G_{S'})\subset \overline{P(\Gamma)}\subset \overline{{\rm X}(O_{(V_K\backslash S)\cup
    S'})}.
$$
Since the variety ${\rm X}$ is a homogeneous space of $\tilde {\rm G}$ with the action
$g\cdot x=p(g)x$, $x\in X$, it follows from \cite[\S3.1]{PlaRa} that every orbit of $\tilde G_{S'}$ is open and
closed in $X_{S'}$. This implies the first claim. 

The last claim follows from 
finiteness of Galois cohomology over local fields (see \cite[\S6.4]{PlaRa}).
\end{proof}

\begin{proof}[Proof of Theorem \ref{th:g3}(i)]
According to Lemma \ref{l:closure}, the set $\overline{{\rm X}(O_{(V_K\backslash S)\cup S'})}$ is 
a finite union of open (and closed) orbits of $p(\tilde G_{S'})$.
We intend to show that given $x^0\in {\rm X}(O_{(V_K\backslash S)\cup S'})$ there exists a set $Y$
of full measure in  $p(\tilde G_{S'})x^0$ whose points satisfies the claim of the theorem.
In fact, we prove that for every bounded $Y\subset p(\tilde G_{S'})x^0$
there exists a conull subset $Y'\subset Y$ whose points satisfy the claim
of the theorem. This will complete the proof.

Let $\Omega$ be a bounded subset of $p(\tilde G_{S'})$ such that
$\Omega x^0=Y$.

We set 
$$
U^0=\prod_{v\in V_K^f\cap (S\backslash S')} (U_v\cap {\rm G}(O_v))\quad\hbox{and}\quad U=U_{V_K^f\backslash S}U^0.
$$
Note that that $U_v={\rm G}(O_v)$ for almost all $v$.
Since both $U_v$ and ${\rm G}(O_v)$ are open and compact, it follows
that the subgroup $U_v\cap {\rm G}(O_v)$ has finite index in $U_v$.
Hence, $U^0$ has finite index in $U_{V_K^f\cap (S\backslash S')}$.

Recall that $G^U$ denotes the kernel of $U$-invariant automorphic characters of $G_{V_K}$
(see Section \ref{sec:mean}). We note that ${\rm G}(K)\subset G^U$, and
$G^U$ is a finite index subgroup of $G_{V_K}$ by Lemma \ref{l:characters}.
Let $\beta'_h$ be the Haar-uniform probability measure supported on $B_h':=U^0(B_h\cap G^U)$.
By Theorem \ref{c:mean}, for every $\phi\in L^2(\Upsilon)$ such that
$\supp(\phi)\subset G^U/{\rm G}(K)$ and $h,\delta'>0$,
$$
\left\|\pi_{(V_K\backslash S)\cup (V_K^f\backslash S')}(\beta'_h)\phi 
-\left(\int_\Upsilon \phi\, d\mu\right)\xi_U\right\|_2\ll_{\delta'}
m_{V_K\backslash S}(B_h\cap G^U)^{-\frac{1}{\mathfrak{q}_{V_K\backslash S}({\rm G})}+\delta'}\|\phi\|_2.
$$
By Lemmas \ref{l:ass} and \ref{l:finite}, 
$$
m_{V_K\backslash S}(B_h\cap G^U)\gg m_{V_K\backslash S}(B_h)\gg_\Omega h^{\mathfrak{a}_S({\rm G})-\delta'}
$$
for every $\delta'>0$ and $h\ge h_0(\delta')$. Hence, combining these estimates,
we conclude that for every $\delta'>0$ and $h\ge h_0(\delta')$, we have
\begin{equation}\label{eq:norm1}
\left\|\pi_{(V_K\backslash S)\cup (V_K^f\backslash S')}(\beta'_h)\phi 
-\left(\int_\Upsilon \phi\, d\mu\right)\xi_U\right\|_2\ll_{\delta'}
h^{-\frac{\mathfrak{a}_S({\rm G})}{\mathfrak{q}_{V_K\backslash S}({\rm G})}+\delta'}\|\phi\|_2.
\end{equation}
We apply (\ref{eq:norm1}) in the case when $\phi$ is the characteristic function $\phi_\epsilon$ of
the set $\Phi_\epsilon$ introduced in Proposition \ref{p:dual1_prime},
which is indexed by $\epsilon=(\epsilon_v)_{v\in S'}$ with 
finite $S'\subset S$ and $\epsilon_v\in I_v\cap (0,\epsilon_0)$.
Note that it follows from the construction of the sets $\Phi_\epsilon$ that
we can arrange that for sufficiently small $\epsilon$, we have $\Phi_\epsilon\subset G^U/{\rm G}(K)$,
and hence (\ref{eq:norm1}) is applicable to $\phi=\phi_\epsilon$. We obtain
\begin{equation}\label{eq:norm11}
\left\|\pi_{(V_K\backslash S)\cup (V_K^f\backslash S')}(\beta'_h)\phi_\epsilon 
-\mu(\Phi_\epsilon)\xi_U\right\|_2\ll_{\delta' }
h^{-\frac{\mathfrak{a}_S({\rm G})}{\mathfrak{q}_{V_K\backslash S}({\rm G})}+\delta'}\mu(\Phi_\epsilon)^{1/2}
\end{equation}
for every $\delta'>0$ and $h\ge h_0(\delta')$.
Let 
\begin{equation}\label{eq:heeee}
h_\epsilon=\left(\prod_{v\in S'} \epsilon_v^{-r_v\frac{\dim({\rm X})}{\mathfrak{a}_S({\rm
        G})}-\delta}\right)^{\mathfrak{q}_{V_K\backslash S}({\rm G})/2}
\end{equation}
with $\delta>0$, and 
$$
\Upsilon_\epsilon=\{\varsigma\in G^U/{\rm G}(K):\, (B_{h_\epsilon}')^{-1}\varsigma\cap \Phi_\epsilon=\emptyset\}.
$$
For $\varsigma\in \Upsilon_\epsilon$, we have
$$
\pi_{(V_K\backslash S)\cup (V_K^f\backslash S')}(\beta'_{h_\epsilon})\phi_\epsilon(\varsigma)=0.
$$
Hence, (\ref{eq:norm11}) implies the estimate
\begin{align*}
\mu(\Upsilon_\epsilon) &\ll_{\delta'} |G_{V_K}:G^U|^{-1} h_\epsilon^{-\frac{2 \mathfrak{a}_S({\rm G})}{\mathfrak{q}_{V_K\backslash S}({\rm
    G})}+2\delta'} \mu(\Phi_\epsilon)^{-1},
\end{align*}
and using (\ref{eq:heeee}) and (\ref{eq:low1}), we conclude that
\begin{align}\label{eq:u_eee}
\mu(\Upsilon_\epsilon)
&\ll_{\Omega,\delta'} \prod_{v\in S'} \epsilon_v^{\theta_v},
\end{align}
where 
\begin{align*}
\theta_v =\mathfrak{a}_S({\rm G}) \delta-\delta'\cdot \mathfrak{q}_{V_K\backslash S}({\rm G})\left(r_v \frac{\dim({\rm X})}{\mathfrak{a}_S({\rm G})}+\delta\right).
\end{align*}
This estimate holds provided that $h_\epsilon\ge h_0(\delta')$.
Hence, it holds for all but finitely many $\epsilon\in E_{S'}\cap (0,\epsilon_0)^{S'}$.

We choose $\delta'=\delta'(\delta)>0$ sufficiently small, so that $\theta_v>0$.

Now we may argue as in the proof of Theorem \ref{th:g1}(i).
We denote by $\Upsilon_0$ the $\limsup$ of the sets $\Upsilon_\epsilon$
with $\epsilon\in {E}_{S'}\cap (0,\epsilon_0)^{S'}$. Since (\ref{eq:u_eee}) holds for all but finitely many
$\epsilon\in E_{S'}$, it follows that
$$
\sum_{\epsilon\in E_{S'}\cap (0,\epsilon_0)^{S'}} \mu(\Upsilon_\epsilon)<\infty,
$$
and by the Borel--Cantelli lemma, $\mu(\Upsilon_0)=0$.
We fix a bounded neighbourhood $V$ of identity in $G_{V_K^\infty\backslash S'}$ contained in $G^U$.
Then $UV$ is a neighbourhood of identity in $G_{V_K\backslash S'}$.
If we set
$$
\Omega'=\{g\in \Omega:\,\, \exists y\in UV:\, (y,g^{-1}){\rm G}(K)\notin\Upsilon_0\},
$$
then 
$$
(UV\times (\Omega\backslash \Omega')^{-1}){\rm G}(K)\subset \Upsilon_0,
$$
and, hence, the set $\Omega\backslash \Omega'$ has measure zero.
This implies that the subset $Y':=\Omega' x^0$
has full measure in $Y=\Omega x^0$.

For $g\in \Omega'$, there exists $y\in UV$ such that 
$\varsigma:= (y, g^{-1}){\rm G}(K)\notin \Upsilon_0$. This implies that there 
exists $\epsilon_0(g,\delta)>0$ such that for 
$\epsilon\in E_{S'}\cap (0,\epsilon_0(g,\delta))^{S'}$, we have
$$
(B_{h_\epsilon}')^{-1}\varsigma\cap \Phi_\epsilon\ne \emptyset, 
$$
and hence,
$$
((UV)^{-1}B_{h_\epsilon}')^{-1}(e,x^{-1})\cap \Phi_\epsilon\ne \emptyset.
$$
Therefore, it follows from Proposition \ref{p:dual1_prime} that
there exists $z\in {\rm G}(O_{(V_K\backslash S)\cup S'})$ such that
$$
\height(z)\ll_\Omega \max_{b\in (UV)^{-1}B_{h_\epsilon}'} \height(b)\ll h_\epsilon,
$$
and for $x=gx^0\in Y'$,
$$
\|x_v-z x^0\|_v \le \epsilon_v\quad \hbox{for all $v\in S'$.}
$$
We have  $z x^0 \in {\rm X}(O_{(V_K\backslash S)\cup S'})$ and $\height(z x^0)\ll \height(z)$.

Since these estimates are valid for arbitrary bounded $Y$, we conclude that 
for every $\delta>0$, almost every $x\in \overline{{\rm X}(O_{(V_K\backslash S)\cup S'})}$, and
$(\epsilon_v)_{v\in S'}$ with finite $S'\subset S$ and
$\epsilon_v\in E_v\cap (0,\epsilon_0(x,S',\delta))$, we have
$$
\omega_S(x,(\epsilon_v)_{v\in S'})\ll_\Omega h_\epsilon= \left(
\prod_{v\in S'} \epsilon_v^{-r_v \frac{\dim({\rm X})}{\mathfrak{a}_S({\rm G})}-\delta}\right)^{\mathfrak{q}_{V_K\backslash S}({\rm G})/2}.
$$
Moreover, this implies that 
$$
\omega_S(x,(\epsilon_v)_{v\in S'})\le\left(
\prod_{v\in S'} \epsilon_v^{-r_v \frac{\dim({\rm X})}{\mathfrak{a}_S({\rm G})}-2\delta}\right)^{\mathfrak{q}_{V_K\backslash S}({\rm G})/2}
$$
for every $\delta>0$ and sufficiently small $\epsilon$ (depending on $\delta$ and $\Omega$).
Finally, it remains to extend this estimate to $\epsilon_v\in (0,\epsilon_0(x,S',\delta))$
for $v\in V_K^\infty\cap S'$, which can be done as in (\ref{eq:oooo}).
\end{proof}

\begin{proof}[Proof of Theorem \ref{th:g3}(ii)]
By Lemma \ref{l:closure}, the set $\overline{{\rm X}(O_{(V_K\backslash S)\cup S'})}$ is 
a finite union of open (and closed) orbits of $p(\tilde G_{S'})$. Hence, it suffices to prove
the theorem for every  compact subset $Y$ contained in  $p(\tilde G_{S'})x^0$
with $x^0\in {\rm X}(O_{(V_K\backslash S)\cup S'})$.

Let $\Omega$ be a bounded subset of $p(\tilde G_{S'})$ such that
$\Omega x^0=Y$. 
Take $g\in \Omega$ and set $\varsigma:=(e,g^{-1}){\rm G}(K)\in \Upsilon$.
We note that $p(\tilde G_{S'})\subset G^U$. In particular, $g\in G^U$.

Let $B'_h$ and $U$ be defined as in the proof of Theorem \ref{th:g3}(i),
and let $\tilde \Psi_\epsilon\subset G_{V_K}$ be the sets defined as in Proposition \ref{p:dual2_prime}
which are indexed by $\epsilon=(\epsilon_v)_{v\in S'}$ with finite $S'\subset S$ and
$\epsilon_v\in I_v\cap (0,\epsilon_0)$.
As in the proof of Theorem \ref{th:g3}(i), we will work on the space $G^U/{\rm G}(K)$.
We note that taking the components of $\epsilon$, $L_v^0$, and $\mathcal{O}_v$
sufficiently small, we can arrange that
$\tilde \Psi_\epsilon \subset G^U$. 
Then Theorem \ref{c:mean} is applicable to
the function $\psi_\epsilon$ which is  the characteristic function of the set
$\Psi_\epsilon:=\tilde \Psi_\epsilon {\rm G}(K)\subset G^U/{\rm G}(K)$. Hence, as in the proof
of Theorem \ref{th:g3}(i),  we have
\begin{equation}\label{eq:norm111}
\left\|\pi_{(V_K\backslash S)\cup (V_K^f\backslash S')}(\beta'_h)\psi_\epsilon 
-\mu(\Psi_\epsilon)\xi_U\right\|_2\ll_{\delta'}
h^{-\frac{\mathfrak{a}_S({\rm G})}{\mathfrak{q}_{V_K\backslash S}({\rm G})}+\delta'}\mu(\Psi_\epsilon)^{1/2}
\end{equation}
for every $\delta'>0$ and $h\ge h_0(\delta')$.
We set 
$$
h_\epsilon=\left( \prod_{v\in S'} \epsilon_v^{-r_v \frac{\dim({\rm G})}{\mathfrak{a}_S({\rm G})}-\delta} \right)^{\mathfrak{q}_{V_K\backslash S}({\rm G})}
$$
with $\delta>0$, and
$$
\Upsilon_\epsilon=\{\varsigma\in G^U/{\rm G}(K):\, (B'_{h_\epsilon})^{-1}\varsigma\cap \tilde \Psi_{\epsilon} {\rm G}(K)=\emptyset\}.
$$
Since $\pi_{(V_K\backslash S)\cup (V_K^f\backslash S')}(\beta'_{h_\epsilon})\psi_\epsilon =0$ on $\Upsilon_\epsilon$,
inequality (\ref{eq:norm111}) implies that
$$
\mu(\Upsilon_\epsilon)\ll_{\delta'} |G_{V_K}:G^U|^{-1} h_\epsilon^{-\frac{2\mathfrak{a}_S({\rm G})}{\mathfrak{q}_{V_K\backslash S}({\rm G})}+2\delta'}\mu(\Psi_\epsilon)^{-1}
$$
when $h_\epsilon\ge h_0(\delta')$.
Combining this estimate with (\ref{eq:low1_1}), we conclude that 
\begin{equation}\label{eq:abs_last}
\mu(\Upsilon_\epsilon)\ll_{\Omega,\delta'} \prod_{v\in S'} \epsilon_v^{\theta_v},
\end{equation}
where 
\begin{align*}
\theta_v&=r_v\dim({\rm X}) +\delta\cdot \mathfrak{q}_{V_K\backslash S}({\rm
  G}) \left(\frac{2\mathfrak{a}_S({\rm G})}{\mathfrak{q}_{V_K\backslash S}({\rm
      G})}-2\delta'\right)-\delta'\cdot 2r_v \frac{\dim({\rm G})}{\mathfrak{a}_S({\rm X})}.
\end{align*}
This estimate holds for all $\epsilon$ satisfying $h_\epsilon\ge h_0(\delta')$,
and hence for all $\epsilon$
with sufficiently small components (depending on $\delta'$).

We pick $\delta'=\delta'(\delta)>0$ such that $\theta_v>r_v\dim({\rm G})$.

Let $g\in \Omega$ and $\varsigma:=(e,g^{-1}){\rm G}(K)$.
Then by (\ref{eq:low1_12}),
$$
\mu(\tilde \Psi^{-1}_\epsilon \varsigma)\gg_\Omega\prod_{v\in S'} \epsilon_v^{r_v\dim({\rm X})},
$$
Comparing this estimate with (\ref{eq:abs_last}), we conclude that  
$$
\mu(\tilde \Psi^{-1}_\epsilon \varsigma)>\mu(\Upsilon_\epsilon)
$$
provided that $\epsilon_v\le \epsilon_0(Y,S',\delta)$, $v\in S'$.
Then $\tilde \Psi^{-1}_\epsilon \varsigma\nsubseteq\Upsilon_\epsilon$, and
$$
(B_h')^{-1} \tilde \Psi^{-1}_\epsilon \varsigma\cap \Psi_\epsilon\ne \emptyset.
$$
Hence, Proposition \ref{p:dual2_prime} implies that
there exists $z\in {\rm G}(O_{(V_K\backslash S)\cup S'})$ such that
$$
\height(z)\ll_\Omega \max_{b\in B_{h_\epsilon}'} \height(b)\ll h_\epsilon=\left( \prod_{v\in S'} \epsilon_v^{-r_v \frac{\dim({\rm G})}{\mathfrak{a}_S({\rm G})}-\delta} \right)^{\mathfrak{q}_{V_K\backslash S}({\rm G})},
$$
and for $x=gx_0$, we have
$$
\|x_v-zx^0\|\le \epsilon_v\quad\hbox{for all $v\in S'$.}
$$
Since $\height(z x^0)\ll \height(z)$, we deduce that for every $\delta>0$ and 
$(\epsilon_v)_{v\in S'}$ with  $\epsilon_v\in I_v\cap (0,\epsilon_0(Y,\delta))$, we have
$$
\omega_S(x, (\epsilon_v)_{v\in S'})\ll_\Omega \left( \prod_{v\in S'} \epsilon_v^{-r_v \frac{\dim({\rm
        G})}{\mathfrak{a}_S({\rm G})}-\delta} \right)^{\mathfrak{q}_{V_K\backslash S}({\rm G})}.
$$
Moreover, this implies that for all sufficiently small $\epsilon$ (depending on $\delta$ and $\Omega$),
we have
$$
\omega_S(x, (\epsilon_v)_{v\in S'})\ll_\Omega \left( \prod_{v\in S'} \epsilon_v^{-r_v \frac{\dim({\rm
        G})}{\mathfrak{a}_S({\rm G})}-2\delta} \right)^{\mathfrak{q}_{V_K\backslash S}({\rm G})}.
$$
This completes the proof of the theorem.
\end{proof}

\section*{Notation}

{\small

\begin{tabular}{ll}
$K$ & a number field,\\
$V_K$ & the set of places of $K$,\\
$V_K^f$ & the subset of non-Archimedean places,\\
$V_K^\infty$ & the subset of Archimedean places,\\
$O$ & the ring of integers of $K$,\\
$O_S$ & the ring of $S$-integers of $K$,\\
$K_v$ & the completion of $K$ at $v\in V_K$,\\
$O_v$ & the ring of integers of $K_v$,\\
$q_v$ & the cardinality of the residue field of $O_v$,\\
$r_v$ & $2$ if $v$ is a complex place and $1$ otherwise,\\
$I_v$ & $(0,1)$ for $v\in V_K^\infty$ and $\{q_v^{-n}\}_{n\ge 1}$ for $v\in V_K^f$,\\
$G_v$ & the set of $K_v$-points of ${\rm G}$,\\
$U_v$ & the maximal compact subgroup of $G_v$ chosen in Section \ref{sec:semisimple_and_spherical},\\
$G_S$ & the restricted direct product of $(G_v,U_v)$ with $v\in S$,\\
$m_{G_v}$ & the Haar measure on $G_v$ normalised,\\
&so that $m_{G_v}(U_v)=1$ for non-Archimedean $v$,\\
$m_S$ & the product of $m_{G_v}$ over $v\in S$,\\
$m_v$ & the Haar measure on $G_v$ defined by ${\rm G}$-invariant differential form,\\
$m$ & the Tamagawa measure on $G_{V_K}$ which is the product of $m_v$'s,\\
$\height$ & the height function, see (\ref{eq:height}),~(\ref{eq:hhh}),\\
$\omega_S$ & the Diophantine approximation function, see (\ref{eq:omega_S}),\\
$\mathfrak{a}_S({\rm X})$ & the exponent of a variety ${\rm X}$, see (\ref{eq:ass_omega}),\\
$\mathfrak{q}_v({\rm G})$ & the integrability exponent of automorphic representations, see
(\ref{eq:q_v}),\\
$\sigma_S$ & the exponent of a subset $S$ of $V_K$, see (\ref{eq:sigma_S}),\\
$\mathfrak{q}_S({\rm G})$ & see (\ref{eq:q_s}),\\
$\Upsilon$ & $G_{V_K}/{\rm G}(K)$,\\
$\mu$ & the probability Haar measure on $\Upsilon$,\\
$\mathcal{X}_{aut}(G_{V_K})$ & the set of automorphic characters of $G_{V_K}$,\\
$\mathcal{X}_{aut}(G_{V_K})^U$ & the set of automorphic characters invariant under $U$,\\
$G^U$ & the kernel of $\mathcal{X}_{aut}(G_{V_K})^U$ in $G_{V_K}$,\\
$\mathcal{O}_S(\epsilon)$ & the neighborhood of identity in $G_S$, see (\ref{eq:o_s}),\\
$B_h$ & see (\ref{eq:b__h}).
\end{tabular}

}

\end{document}